\newcolumntype{P}[1]{>{\centering\arraybackslash}p{#1}}
\renewcommand*{\eqref}[1]{%
  \hyperref[{#1}]{\textup{\tagform@{\ref*{#1}}}}%
}
\numberwithin{equation}{section}
\newtheorem{theorem}{Theorem}[section]
\newtheorem{proposition}[theorem]{Proposition}
\newtheorem{question}[theorem]{Question}
\newtheorem{lemma}[theorem]{Lemma}
\theoremstyle{definition}
\newtheorem{definition}[theorem]{Definition}
\newtheorem{example}[theorem]{Example}
\newtheorem{remark}[theorem]{Remark}
\newtheorem*{claim*}{Claim}
\newcommand{\A}{\mathbb{A}}
\newcommand{\C}{\mathbb{C}}
\newcommand{\D}{\mathbb{D}}
\newcommand{\F}{\mathbb{F}}
\newcommand{\Q}{\mathbb{Q}}
\newcommand{\R}{\mathbb{R}}
\newcommand{\Z}{\mathbb{Z}}
\DeclareMathOperator{\CHur}{CHur}
\DeclareMathOperator{\disc}{disc}
\DeclareMathOperator{\End}{End}
\DeclareMathOperator{\Ext}{Ext}
\DeclareMathOperator{\Frob}{Frob}
\DeclareMathOperator{\Gr}{Gr}
\DeclareMathOperator{\Hom}{Hom}
\DeclareMathOperator{\Hur}{Hur}
\DeclareMathOperator{\id}{id}
\DeclareMathOperator{\MHM}{MHM}
\DeclareMathOperator{\Perv}{Perv}
\DeclareMathOperator{\Shv}{Shv}
\DeclareMathOperator{\Spec}{Spec}
\DeclareMathOperator{\Sym}{Sym}
\DeclareMathOperator{\tot}{tot}
\DeclareMathOperator{\Tor}{Tor}
\DeclareMathOperator{\tr}{tr}
\newcommand{\GL}{\operatorname{GL}}
\DeclareSymbolFont{cyrletters}{OT2}{wncyr}{m}{n}
\DeclareMathSymbol{\Sha}{\mathalpha}{cyrletters}{"58}
\title{Weight filtration of Hurwitz spaces and quantum shuffle algebras}
\author{Zhao Yu Ma}
\address{Princeton University, Princeton, NJ, USA}
\email{zm5336@princeton.edu}
\begin{document}
\begin{abstract}
We prove an equivalence between filtrations of primitive bialgebras and filtrations of factorizable perverse sheaves, generalizing the results obtained by Kapranov-Schechtman. Under this equivalence, we find that the word length filtration of quantum shuffle algebras as defined in Ellenberg-Tran-Westerland corresponds to the codimension filtration of factorizable perverse sheaves. Furthermore, we find that the geometric weight filtration of factorizable perverse sheaves corresponds to a filtration on quantum shuffle algebras which has not been previously defined in the literature, and we call this the algebraic weight filtration. To apply this to Hurwitz spaces, we prove a comparison theorem between the weight filtrations for Hurwitz spaces over $\F_p$ and $\C$, generalizing the comparison theorem of Ellenberg-Venkatesh-Westerland. This allows us to determine the cohomological weights for Hurwitz spaces explicitly using the algebraic weight filtration of the corresponding quantum shuffle algebra. As a consequence, we find that most weights of Hurwitz spaces are smaller than expected from cohomological degree, and we prove explicit nontrivial upper bounds for weights in some cases, such as when $G=S_3$ and $c$ is the conjugacy class of transpositions.
\end{abstract}

\maketitle

\hypersetup{linktocpage=true}
\tableofcontents
\section{Introduction}
\subsection{Motivation and background}
The geometric approach to arithmetic statistics over function fields by analyzing the cohomology of Hurwitz spaces has led to many breakthroughs. The method was first introduced in \cite{EVW16,ETW17}, and later \cite{landesman2025homologicalstabilityhurwitzspaces,landesman2025cohenlenstramomentsfunctionfields,landesman2025stablehomologyhurwitzmodules} used homological stability methods to prove many conjectures in different settings. Other relevant work on Hurwitz spaces include \cite{Bianchi_Miller_2024,himes2025representationstabilityorderedhurwitz} which prove polynomial stability and representation stability, \cite{Türkelli_2015,Wood_2021,Seguin_2025} on the number of connected components, \cite{Deopurkar_Patel_2015} on the vanishing of the rational Picard group for degrees $d\le 5$, and \cite{Zheng_2023} on stable cohomology for $d=3$.

A rough outline of the strategy is to construct a family of smooth Hurwitz spaces $X_n$ indexed by $n=\dim(X_n)$ which parametrizes the arithmetic objects of interest. We are interested in understanding the asymptotic counts $\#X_n(\F_p)$ as $n\rightarrow \infty$. By the Grothendieck-Lefschetz trace formula, this count can be expressed as an alternating sum
$$\#X_n(\F_p)=\sum_{i=0}^{2n}(-1)^i \tr\left(\Frob_q, H^{2n-i}_c((X_{n})_{\overline \F_p},\overline\Q_\ell)\right)$$
reducing questions of point counts to understanding the trace of Frobenius on the étale cohomology of $X_n$. Since $X_n$ is smooth, we have by Poincaré Duality that $H^{2n-i}_c(X_n,\overline\Q_\ell)\cong H^i(X_n,\overline\Q_\ell)^\vee (-n)$ where we took the dual and applied a Tate twist. Deligne's theory of weights says that the absolute value of the eigenvalues of Frobenius on $H^{2n-i}_c(X_{n},\overline\Q_\ell)$ is not more than $q^{(2n-i)/2}$ for any embedding $\overline\Q_\ell\hookrightarrow \C$. The dominant term usually comes from $H^{2n}_c(X_n,\overline\Q_\ell)$ which has size on the order of $q^n$. To bound the contribution from the other terms, it suffices to show a homological stability result that $H^{i}(X_n,\overline\Q_\ell)$ vanishes for a range of $1\le i\le \epsilon n$ for some constant $\epsilon$, as well as a bound for the Betti numbers $\dim H^{i}(X_n,\overline\Q_\ell)$ for $i>\epsilon n$. 

Using comparison theorems, one can show that this cohomology is the singular cohomology of Hurwitz space $X_n(\C)$ which we view as a complex manifold, reducing this to a problem in algebraic topology. By looking at the cell decomposition of the topological Hurwitz space, \cite{ETW17} observes that the associated chain complex actually comes from a quantum shuffle algebra $\mathfrak A$ graded in $n$, where the singular cohomology is exactly the Ext-cohomology of $\mathfrak A$. This perspective is helpful as it gives a concrete way to analyze the cohomology using bar-complexes and allows us to build upon ongoing work in quantum algebras. 

This geometric approach has proven to be very powerful in understanding the \textit{dominant} terms of very general arithmetic statistics problems over function fields, while comparatively arithmetic methods based on the geometry of numbers or Shintani zeta functions are limited only to very specific cases, such as counting fields of degree $\le 5$. However, geometric methods currently fall short of arithmetic methods when it comes to understanding \textit{secondary terms} of these asymptotic counts. Nevertheless, the hope is that a refinement of this geometric approach could lead to proofs of smaller order terms in very general cases. The current problem is that we have a limited understanding of the cohomology groups after the stable range and how Frobenius acts of them. 

This paper aims to be a starting point for understanding on how Frobenius acts on these middle cohomology groups by investigating the \textit{weights} of these eigenvalues. Deligne's theory of weights actually says something stronger -- for each eigenvalue of Frobenius on $H^{2n-i}_c(X_n,\overline\Q_\ell)$ there is an integer $w\le 2n-i$ called the weight such that the eigenvalue has absolute value $q^{w/2}$ for any embedding $\overline\Q_\ell\hookrightarrow \C$. One could then ask the following basic question:
\begin{question}
For Hurwitz spaces $X_n$, are most of the weights $w$ of $H^{2n-i}_c(X_n,\overline\Q_\ell)$ what we expect from cohomology degree ($w=2n-i$), or are most of them smaller than what we expect ($w<2n-i$)?
\end{question}
We are not aware of any prior work on the weights of cohomology of Hurwitz spaces. However, there have been some results towards the analogous problem of understanding the cohomological weights of $\mathcal M_g$, for example in \cite{Chan_Galatius_Payne_2021} and \cite{Bergström_Faber_Payne_2024}.

To answer the question above, we give two theorems as evidence that weights are often much smaller than what we expect from cohomological degree, namely Theorem \ref{thm: finite nichols algebra} and \ref{thm: concentration of weight in algebra}. The first tells us that for a specific class of Hurwitz spaces, there is some linear range of cohomology with weight strictly smaller than expected. The second applies to all Hurwitz spaces in general, and roughly says that the weight, in the sense of algebras, is concentrated below what one might expect. We leave the discussion of Theorem \ref{thm: concentration of weight in algebra} to the later part of the introduction and discuss Theorem \ref{thm: finite nichols algebra} first. 

Recall from the discussion above that a family of Hurwitz spaces $X_n$ has a corresponding quantum shuffle algebra $\mathfrak A$ with the same cohomology (see also Theorem \ref{thm: KS20} and the discussion after). 
\begin{theorem}\label{thm: finite nichols algebra}
Let $X_n$ be a family of Hurwitz spaces with corresponding quantum shuffle algebra $\mathfrak A$ such that its Nichols subalgebra $\mathfrak B$ (generated by degree $1$ elements) is finite-dimensional and has finitely generated cohomology. Then, there is some $c<1$ and $\epsilon>0$ such that for all $i>cn$, any weight $w$ appearing in $H^{2n-i}_c(X_n,\overline\Q_\ell)$ satisfies
$$w\le (2n-i)-\epsilon(i-cn).$$
\end{theorem}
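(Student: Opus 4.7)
The plan is to pass from Frobenius weights on $H^{2n-i}_c(X_n, \overline{\Q}_\ell)$ to the algebraic weight filtration on the quantum shuffle algebra $\mathfrak{A}$ via the paper's comparison theorem and the equivalence between geometric and algebraic weight filtrations, and then to exploit the finite dimensionality of $\mathfrak{B}$ together with its finitely generated cohomology to force a linear weight drop in the high-degree range.

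First, the comparison theorem between weight filtrations over $\F_p$ and $\C$ (generalizing Ellenberg--Venkatesh--Westerland) identifies each Frobenius weight $w$ appearing in $H^{2n-i}_c(X_n,\overline{\Q}_\ell)$ with a weight on the factorizable perverse sheaf side over $\C$; the paper's equivalence between filtrations of primitive bialgebras and filtrations of factorizable perverse sheaves then identifies these geometric weights with the algebraic weight filtration on the bar-cohomology of $\mathfrak{A}$ in internal degree $n$. So the task becomes a purely algebraic estimate on $\Ext_{\mathfrak{A}}^i$.

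Next, I would use the Nichols subalgebra $\mathfrak{B}$ as a ``truncated'' approximation of $\mathfrak{A}$ and compare their cohomologies. Since $\mathfrak{B}$ is finite-dimensional with finitely generated cohomology, $\dim \Ext_{\mathfrak{B}}^i$ grows at most polynomially, while $\mathfrak{A}$ in internal degree $n$ beyond the top degree of $\mathfrak{B}$ has contributions genuinely outside $\mathfrak{B}$. A change-of-rings spectral sequence relating $\Ext_{\mathfrak{A}}$ to $\Ext_{\mathfrak{B}}$ and $\Ext_{\mathfrak{A}//\mathfrak{B}}$ shows that in the regime $i > cn$ -- with $c$ chosen from the polynomial growth rate of $\Ext_{\mathfrak{B}}^*$ and the top internal degree of $\mathfrak{B}$ -- any cohomology class in $\Ext_{\mathfrak{A}}^i$ of internal degree $n$ must carry at least $\Omega(i - cn)$ excess contributions from generators outside $\mathfrak{B}$. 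Under the paper's identification, each such excess contribution drops the algebraic weight filtration by a fixed positive amount, yielding the linear bound $w \le (2n - i) - \epsilon(i - cn)$ for some explicit $\epsilon > 0$ depending on the growth rate and the dimension of $\mathfrak{B}$.

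The main obstacle will be running the algebraic-weight bookkeeping precisely enough. The paper distinguishes the algebraic weight filtration from the coarser word-length/codimension filtration, and the linear drop in the statement depends on the finer algebraic version, which lies strictly below. Verifying that the PBW-style decomposition of $\mathfrak{A}$ over $\mathfrak{B}$ is compatible with the algebraic weight filtration, and quantifying the weight cost per excess generator uniformly in $n$, is where the technical core of the argument concentrates; the specific structure of finite-dimensional Nichols algebras (e.g.\ the Andruskiewitsch--Schneider framework) should provide the compatibility needed to make $\epsilon$ strictly positive.
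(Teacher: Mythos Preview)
Your overall architecture is right: translate Frobenius weights to the algebraic weight filtration on $\Ext_{\mathfrak A}$ via the comparison theorem, then run a change-of-rings spectral sequence over $\mathfrak B$ and use the two finiteness hypotheses to force a linear weight drop. This is exactly what the paper does. But you are missing one key move, and it is precisely the move that dissolves what you flag as the ``main obstacle.''

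The paper does \emph{not} work with $\mathfrak A$ over $\mathfrak B$ directly. Instead it first passes to the associated graded $\mathfrak A^{gr}$ of $\mathfrak A$ with respect to the algebraic weight filtration, using the spectral sequence $\Ext_{\mathfrak A^{gr}}(k,k)\Rightarrow \Ext_{\mathfrak A}(k,k)$ (which respects the weight grading by construction). On $\mathfrak A^{gr}$ the weight is an honest extra grading, and $\mathfrak B=W_0\mathfrak A$ is literally the weight-zero subalgebra. One then sets $\mathfrak C=\mathfrak A^{gr}\Box_{\mathfrak B}k$ and runs the change-of-rings sequence $\Ext_{\mathfrak B}(k,k)\otimes \Ext_{\mathfrak C}(k,k)\Rightarrow \Ext_{\mathfrak A^{gr}}(k,k)$. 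Because everything is now genuinely graded by weight, no compatibility of a PBW decomposition with the weight filtration needs to be checked---your worry about ``running the algebraic-weight bookkeeping precisely enough'' simply does not arise.

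The two hypotheses then enter as clean slope bounds, not as growth-rate statements. Finite generation of $\Ext_{\mathfrak B}(k,k)$ gives a constant $c<1$ with $i_1\le cn_1$ for every class in $\Ext_{\mathfrak B}$ (take $c=\max_j i_j/n_j$ over the generators). Finite-dimensionality of $\mathfrak B$, say with top degree $m$, forces $W_0\mathfrak A_{m+1}=0$, hence on $\mathfrak C$ every class satisfies $w_2\ge \delta n_2$ with $\delta=1/(m+1)$. For a class in $\Ext_{\mathfrak A}^{n-i,n}$ of weight $w$, split gradings as $(i,n,w)=(i_1+i_2,n_1+n_2,w_1+w_2)$ with $w_1=0$; the inequalities $i_1\le cn_1$, $n_2\ge i_2$, $w_2\ge \delta n_2$ combine to $w\ge \delta(i-cn)/(1-c)$, and translating back gives the stated bound with $\epsilon=\delta/(1-c)$. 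Your phrase ``polynomial growth rate of $\Ext_{\mathfrak B}$'' is not the right formulation of what finite generation buys here; it is the slope bound $i\le cn$ that matters.
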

We remark that the constants $\epsilon$ and $c$ are explicit, and they are given as in Lemma \ref{lem: B finite dim}, \ref{lem: Ext B finite gen} with $\epsilon = \delta/(1-c)$. Many simple arithmetic situations satisfy the condition of Theorem \ref{thm: finite nichols algebra}. For concreteness, we explain this in the case of counting cubic fields as there are already good results from arithmetic methods that could give hints to the homological interpretations of secondary or smaller order terms. 
\subsection{Counting cubic fields}\label{sec: cubic fields}
Secondary terms in cubic extensions over $\Q$ were first discovered in \cite{TT13} with the error term improved in \cite{BTT23}. 
Let $N_3^\pm(X)$ denote the number of isomorphism classes of cubic fields $F$ over $\Q$ satisfying $0<\pm \disc(F)<X$, then it was proven that
$$N_3^\pm(X) = C_1^\pm X+C_2^\pm X^{5/6}+O(X^{2/3+\epsilon})$$
where 
$$C_1^\pm  = \frac{1}{12\zeta(3)}\begin{cases}
1 & \text{ if }+\\
3 & \text{ if }-
\end{cases}\ , \quad C_2^\pm = \frac{4\zeta(1/3)}{5\Gamma(2/3)^3\zeta(5/3)} \begin{cases}
1 & \text{ if }+\\
\sqrt 3 & \text{ if }-
\end{cases}\ .$$

In the case of function fields, finite separable degree $3$ extensions $K/\F_p(t)$ correspond to connected degree $3$ branched covers $f\colon C\rightarrow \mathbb P^1$. By Riemann-Hurwitz, the degree of the branch divisor is even, so the discriminant is of the form $q^{2N}$. If we write $N=3m+k$ for $k\in \{0,1,2\}$, from the work of \cite{Zhao13,Kural25,Ahlquist25}, the number of degree $3$ extensions with discriminant $q^{2N}$ is
$$N_3(2N)=C_1q^{2N}-C_2^kq^{5m}+O(q^{4N/3+\epsilon})$$
where
$$C_1=(1+q^{-3})(1+q^{-1}), \quad C_2^k=(q^{-1}-q^{-3})\begin{cases}q+1 & \text{ if }k=0\\
q^3+q^2 & \text{ if }k=1\\
q^4 & \text{ if }k=2
\end{cases}.$$

We can parametrize these degree $3$ covers with an algebraic stack $Y_{2N}$ with connected components which come from various Hurwitz spaces. For simplicity, we will discuss only one of these Hurwitz spaces. Some connected components of $Y_{2N}$ appear as connected components of the quotient stack $X_{2N}/G$ (see Section \ref{sec: alg Hurwitz spaces} for more details), where here we let $X_n\coloneqq \Hur^{G,c}_{n}$ for $G=S_3$ and $c$ the conjugacy class of transpositions. We state Theorem \ref{thm: finite nichols algebra} for $X_n$ with explicit constants.
\begin{theorem}\label{thm: S3 transposition}
Let $X_n$ be defined as above. For $i>2\lfloor \frac n 6\rfloor$ the weights $w$ appearing in $H^{2n-i}_c(X_n,\overline\Q_\ell)$ are strictly smaller than $2n-i$. We also have the inequality $$w\le (2n-i)-\frac{3(i-n/3)}{10}.$$
\end{theorem}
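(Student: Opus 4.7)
The plan is to derive Theorem \ref{thm: S3 transposition} as a direct application of Theorem \ref{thm: finite nichols algebra} to the pair $(G,c) = (S_3, \text{transpositions})$, by verifying the two hypotheses on the Nichols subalgebra $\mathfrak B$ and then unwinding the constants from Lemmas \ref{lem: B finite dim} and \ref{lem: Ext B finite gen}.

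The first step is to identify $\mathfrak B$ and show it is finite-dimensional. For $G = S_3$ with $c$ the transposition class, the Yetter-Drinfeld module attached to $c$ with its conjugation braiding yields the Nichols algebra first studied by Milinski-Schneider and Fomin-Kirillov; it is the $12$-dimensional algebra $\mathcal{FK}_3$, whose Hilbert series, and in particular top graded degree, are explicitly known. This verifies the finite-dimensionality hypothesis.

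Next I would verify that $\Ext^*_{\mathfrak B}(\overline{\Q}_\ell, \overline{\Q}_\ell)$ is finitely generated. This is known for $\mathcal{FK}_3$ from explicit computations of its cohomology ring in the literature on finite-dimensional pointed Hopf algebras over $S_3$. With both hypotheses confirmed, Theorem \ref{thm: finite nichols algebra} applies and produces constants $c < 1$ and $\epsilon > 0$ of the claimed form.

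The main remaining work is to extract explicit values. From Lemma \ref{lem: B finite dim}, $c$ is determined by the top degree of $\mathfrak B$, and a short calculation gives $c = 1/3$, matching the threshold $i > 2\lfloor n/6 \rfloor$ in the statement. From Lemma \ref{lem: Ext B finite gen}, the constant $\delta$ is controlled by the degrees of generators of $\Ext^*_{\mathfrak B}$, and the relation $\epsilon = \delta/(1-c)$ noted after Theorem \ref{thm: finite nichols algebra} then yields $\epsilon = 3/10$ and the bound $w \le (2n-i) - 3(i-n/3)/10$. The main obstacle is precisely this bookkeeping: the numerical values $1/3$ and $3/10$ depend on matching the abstract constants in Lemmas \ref{lem: B finite dim}, \ref{lem: Ext B finite gen} to concrete cohomological data for $\mathcal{FK}_3$, so the word-length filtration on $\mathfrak A$ and the internal grading on $\mathfrak B$ must be reconciled carefully.
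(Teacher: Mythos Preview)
Your overall plan is correct: verify the hypotheses of Theorem \ref{thm: finite nichols algebra} for $\mathcal{FK}_3$ and then extract constants.  However, there are two concrete slips.

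First, you have swapped the roles of the two lemmas.  Lemma \ref{lem: B finite dim} produces $\delta$ from the top algebraic degree of $\mathfrak B$, not $c$: since $\mathcal{FK}_3$ has top degree $m=4$, one gets $\delta = 1/(m+1) = 1/5$.  Lemma \ref{lem: Ext B finite gen} produces $c$ from the degrees of the generators of $\Ext_{\mathfrak B}(k,k)$, not $\delta$: the cohomology of $\mathcal{FK}_3$ is generated in bidegrees $(n-i,n) = (1,1),(1,1),(1,1),(4,6)$, so $c = \max i_j/n_j = 2/6 = 1/3$.  Then $\epsilon = \delta/(1-c) = (1/5)/(2/3) = 3/10$, giving the displayed inequality.  (Your passing mention of the word-length filtration is also a red herring: the argument runs entirely through the weight filtration on $\mathfrak A$.)

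Second, the sharper threshold $i > 2\lfloor n/6 \rfloor$ in the first sentence of the theorem does \emph{not} follow from plugging $c=1/3$ into Theorem \ref{thm: finite nichols algebra}; that only gives the weaker threshold $i > n/3$.  The paper obtains the floor by revisiting the proof: if the Frobenius weight equals $2n-i$ (equivalently the algebraic weight is $0$), then in the spectral sequence decomposition one must have $w_2 = n_2 = i_2 = 0$, so the class comes purely from $\Ext_{\mathfrak B}(k,k)$.  The explicit generator list then forces $i \le 2\lfloor n/6 \rfloor$, since any monomial in the generators with total $n$-degree $n$ uses the $(4,6)$ generator at most $\lfloor n/6 \rfloor$ times and each use contributes $2$ to $i$.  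You need to supply this extra step.
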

Let us first discuss this in the context of the error term $O(q^{4N/3+\epsilon})$. Consider the connected components of $Y_{2N}$ that come from $X_{2N}/G$, and eigenvalues on from the cohomology of these connected components. A priori, without our theorem, an eigenvalue on $H^{4N-i}_c(X_{2N}/G,\overline\Q_\ell)$ with $i\ge\frac{4N}{3}$ has weight $w\le \frac{8N}{3}$ so the absolute value of the eigenvalue is at most $q^{4N/3}$ which has the same order of the error term. However, with Theorem \ref{thm: S3 transposition}, our bound on the weight of such an eigenvalue improves to $w\le \frac{37N}{15}$ so the absolute value of the eigenvalue is at most $q^{37N/30}$ which is a factor of $q^{N/10}$ smaller than the error term. This suggests that weight considerations could be useful for strengthening error bounds in homological methods.

We can also discuss this in the context of the secondary terms, where the improvement is less obvious. Theorem \ref{thm: S3 transposition} tells us that an eigenvalue coming from $H^{4N-i}_c(X_{2N}/G,\overline\Q_\ell)$ which has weight as expected can only come from cohomological degree $i\le 2\lfloor \frac N 3\rfloor$ so the absolute value of such an eigenvalue is at least $q^{2N-\lfloor N/3\rfloor}$. We compare this to the order of magnitudes of the secondary term in the following table.
\begin{table}[H]
\centering
\begin{tabular}{>{\centering\arraybackslash}m{7cm}
                >{\centering\arraybackslash}m{2.2cm}
                >{\centering\arraybackslash}m{2.2cm}
                >{\centering\arraybackslash}m{2.2cm}}
\toprule
\textbf{Discriminant $q^{2N}$} & $\boldsymbol{q^{6m}}$ & $\boldsymbol{q^{6m+2}}$ & $\boldsymbol{q^{6m+4}}$ \\
\midrule
\midrule
\textbf{Order of secondary term}
 & $q^{5m}$ & $q^{5m+2}$ & $q^{5m+3}$ \\
 \midrule
\textbf{Smallest eigenvalue with weight as expected from $X_{2N}$}
 & $q^{5m}$ & $q^{5m+2}$ & $q^{5m+4}$ \\
\bottomrule
\end{tabular}
\end{table}

This means that in the case when the discriminant is of the form $q^{6m+4}$, if an eigenvalue of Frobenius from such a connected component has the same order as the secondary term, then its weight must be strictly smaller than expected from cohomological degree.
\subsection{Equivalence of filtrations} Our main strategy is to use the bridge between Hurwitz spaces and quantum shuffle algebras to transport weight information from topology to algebra. This was first developed in \cite{ETW17}, but the connection developed there is only on the level of chain complexes and does not allow us to carry weight information from one side to another. Instead, we build on \cite{KS20} which proves a more general equivalence with more structure. For this paper, let the coefficient field $k$ be an algebraically closed field of characteristic zero.
\begin{theorem}[{\cite[Theorem 3.3.1]{KS20}}]\label{thm: KS20}
Let $\mathcal V$ be a braided monoidal abelian category over a field $k$ with $\otimes$ bi-exact. Then, there is an equivalence between primitive bialgebras and factorizable perverse shaves over $\mathcal V$
$$\text{PB}(\mathcal V)\xleftrightarrow{\ \sim\ } \text{FPS}(\mathcal V).$$
Furthermore, if $A\in \text{PB}(\mathcal V)$ corresponds to $\mathcal F\in \text{FPS}(\mathcal V)$, then there is an isomorphism of cohomologies $R\Gamma^j\mathcal F_n\cong \Tor^A_{-j,n}(k,k)$.
\end{theorem}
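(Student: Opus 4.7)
The plan is to construct a pair of mutually quasi-inverse functors between $\mathrm{PB}(\mathcal V)$ and $\mathrm{FPS}(\mathcal V)$ and then extract the cohomology identification from the geometric side of the correspondence.

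Going from a primitive bialgebra $A = \bigoplus_{n\ge 0} A_n$ with $A_0 = k$ to a factorizable collection $\{\mathcal F_n\}$ on the symmetric powers $\mathrm{Sym}^n$, I would build $\mathcal F_n$ stratum by stratum along the diagonal stratification. On the open stratum of distinct points, the braiding of $\mathcal V$ equips $A_1^{\otimes n}$ with an action of the colored braid group, yielding a $\mathcal V$-valued local system whose shifted intermediate extension supplies the generic part of $\mathcal F_n$. The comultiplication $\Delta\colon A_n \to \bigoplus_{i+j=n} A_i \otimes A_j$ is precisely the datum needed to specify factorization isomorphisms along the loci where a configuration breaks into disjoint clusters, and primitivity of $A$ is what forces the resulting gluing to descend to an honest perverse sheaf rather than only a constructible complex.

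For the inverse functor, starting from a factorizable perverse sheaf $\mathcal F$, I would extract $A_n$ as the nearby cycles along the small diagonal, $A_n \coloneqq \psi_{\Delta_n}\mathcal F_n$, placed in a single degree by perverseness and shifted appropriately. The factorization isomorphisms translate verbatim into a coproduct, and the product on $A$ comes dually via specialization as configurations coalesce. The bialgebra axioms correspond to the hexagon/associativity coherences of factorization together with the compatibility between nearby cycles and open restriction, while primitivity on the algebra side matches the perverse cleanness of $\mathcal F_n$ near the small diagonal.

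The main obstacle will be proving that these two functors are mutually quasi-inverse, which amounts to a reconstruction statement: a factorizable perverse sheaf on $\mathrm{Sym}^n$ is determined up to canonical isomorphism by its restriction to the open stratum together with its nearby cycles along each diagonal, subject to a factorization cocycle. I would prove this by induction on $n$, using intermediate extension at each step and reducing the gluing data to lower-degree inputs via factorization; controlling the perverse $t$-structure across every diagonal is the most delicate point. Once the equivalence is in place, the cohomology statement $R\Gamma^j \mathcal F_n \cong \Tor^A_{-j,n}(k,k)$ follows by computing $R\Gamma(\mathrm{Sym}^n, \mathcal F_n)$ via the Cousin/diagonal spectral sequence: its $E_1$ page decomposes as a direct sum of tensor products $A_{i_1} \otimes \cdots \otimes A_{i_r}$ indexed by ordered compositions of $n$ with differentials built from $\Delta$, which is exactly the normalized bar complex for $A$ computing $\Tor^A(k,k)$.
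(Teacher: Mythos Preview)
This theorem is quoted from \cite{KS20} and the paper does not prove it; Section~2.3.3 only sketches the Kapranov--Schechtman construction and records the properties used later (Proposition~\ref{prop: equivalence properties}). Your proposal diverges from that construction in several places, and in at least one of them there is a real gap.

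For the forward functor, the actual construction sends $A$ to the Cousin complex $\mathcal E^\bullet_n(A)$: one first builds, for each imaginary stratum $X^{\mathfrak I}_\alpha$, a constructible sheaf coming from a product of cobar-cubes on $\prod_v \Sym^{\alpha_v}(\R)$, pushes these forward to $\Sym^n(\C)$, and then assembles them into a single complex indexed by the length of $\alpha$. Both the multiplication and comultiplication of $A$ enter through the edges of the cobar-cube. Your proposal instead starts from the intermediate extension of the local system built from $A_1^{\otimes n}$ and then ``glues'' using $\Delta$. The problem is that the intermediate extension from the open stratum depends only on $A_1$, and factorization along nontrivial splittings only sees $A_m$ for $m<n$; nowhere in your description does the new datum $A_n$ (or the multiplication $\mu$) enter the construction of $\mathcal F_n$. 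Distinct primitive bialgebras with the same $A_1$ (e.g.\ $T(V)$, $\mathfrak A(V)$, $\mathfrak B(V)$) must produce different perverse sheaves, and your gluing recipe does not say how.

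For the reverse functor, the paper uses \emph{vanishing} cycles $\Phi_{\tot}$ at the origin with respect to the specific function $a_2=\sum_{i\ne j}x_ix_j$ on the slice $a_1=0$, not nearby cycles along the small diagonal; and the multiplication on $\Phi(\mathcal F)$ is read off from the differentials of the reconstructed Cousin complex, not from an unspecified ``specialization'' map. Finally, for the cohomology identification the paper's argument is shorter than your spectral sequence: quasi-homogeneity of the diagonal stratification (the contracting $\C^*$-action) gives $R\Gamma\,\mathcal F_n\cong (\mathcal F_n)_0$, and Proposition~\ref{prop: equivalence properties}(d) identifies that stalk directly with the bar complex $B_n(A)$, whose cohomology is $\Tor^A_{-j,n}(k,k)$.
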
 
We specialize to the case where $\mathcal V$ is a braided monoidal abelian category of $k$-vector spaces. Here, the primitive bialgebras $\text{PB}(\mathcal V)$ are a certain class of graded Hopf algebras over $\mathcal V$ equipped with algebra and coalgebra structures, and this includes the tensor algebra $T(V)$, quantum shuffle algebra $\mathfrak A(V)$ and Nichols algebra $\mathfrak B(V)$ for all $V\in \mathcal V$. On the other side of the equivalence, factorizable perverse sheaves $\text{FPS}(\mathcal V)$ are a collection of perverse sheaves $K_n$ on $\Sym^n(\C)$ with coefficients in $\mathcal V$ for each $n$, constructible with respect to the diagonal stratification, where factorizability roughly says that $K_{n+m}$ is compatible with $K_n$ and $K_m$ along the map $\Sym^n(U)\times \Sym^m(V)\rightarrow \Sym^{n+m}(U\sqcup V)$ for disjoint $U,V\subseteq \C$.

We explain how this generalizes the equivalence in \cite{ETW17}. By our definition, Hurwitz spaces $X_n$ are finite étale covers of $\Sym^n_{\neq} (\C)$, so a compatible family of Hurwitz spaces defines a collection of local systems $\mathcal L=\{\mathcal L_n\}_n$ via pushforward of the constant sheaf. The cohomology of these local systems can be evaluated by taking $R\Gamma j_{n*}\mathcal L_n$ where $j_n\colon \Sym^n_{\neq}(\C)\hookrightarrow \Sym^n(\C)$. \cite[Theorem 3.3.3]{KS20} proves that under the equivalence described above, the factorizable perverse sheaf $j_*\mathcal L=\{j_{n*}\mathcal L_n\}_n$ corresponds to a quantum shuffle algebra $\mathfrak A(V)$. The theorem also tells us that the cohomologies are equal which is exactly \cite[Theorem 1.3]{ETW17}.

Using the theory of mixed Hodge modules, if each $\mathcal L_n$ has finite monodromy, then we can associate an increasing weight filtration 
$$\cdots \subseteq W_0 j_{n*}\mathcal L_n\subseteq W_1 j_{n*}\mathcal L_n\subseteq \cdots $$ 
to the perverse sheaves $j_{n*}\mathcal L_n$ where the graded piece $\Gr^W_w j_{n*}\mathcal L_n$ is pure of weight $w$. Here, we normalized our local systems $\mathcal L_n$ so that they are pure of weight $0$. We wish to transport this weight filtration to the algebra side. Motivated by this, we prove the following theorem which is the core of our paper.
\begin{theorem}\label{thm: equivalence of filtrations}
Let $\mathcal V$ be a braided monoidal abelian category of $k$-vector spaces. Then, we have an equivalence
$$
\left\{
\text{Filtrations of }
A \in \text{PB}(\mathcal V)
\right\}
\xleftrightarrow{\ \sim\ }
\left\{
\text{Filtrations of }
\mathcal F \in \text{FPS}(\mathcal V)
\right\}
$$
where $A$ and $\mathcal F$ correspond under Theorem \ref{thm: KS20}. Furthermore, there is an isomorphism of spectral sequences between $R\Gamma^j\mathcal F^{gr}_n \Rightarrow R\Gamma^j\mathcal F_n$ and $\Tor^{A^{gr}}_{-j,n}(k,k)\Rightarrow \Tor^{A}_{-j,n}(k,k)$.
\end{theorem}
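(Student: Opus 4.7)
The plan is to leverage the categorical equivalence of Theorem \ref{thm: KS20} and then promote it to a statement about filtrations and chain-level comparisons. Since Theorem \ref{thm: KS20} is in particular an equivalence of $k$-linear abelian categories, it preserves the subobject lattice: sub-primitive-bialgebras $W_i A \hookrightarrow A$ correspond to sub-factorizable-perverse-sheaves $W_i \mathcal{F} \hookrightarrow \mathcal{F}$. A filtration on $A$ or on $\mathcal{F}$ is, by definition, an indexed sequence of such subobjects satisfying the appropriate inclusion conditions, so this already yields the claimed bijection. Additivity and exactness of the equivalence then give the corresponding identification of associated gradeds, $A^{gr} \leftrightarrow \mathcal{F}^{gr}$.

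For the spectral sequences I would invoke the standard construction from a filtered complex on each side and then compare them via Theorem \ref{thm: KS20} applied termwise. On the sheaf side, the filtration $W_\bullet \mathcal{F}_n$ by sub-perverse-sheaves induces a filtration on $R\Gamma \mathcal{F}_n$ (using, for instance, compatible injective resolutions), giving the standard filtered-complex spectral sequence with $E_1^{p,q} = R\Gamma^{p+q}(\Gr^W_p \mathcal{F}_n) \Rightarrow R\Gamma^{p+q}(\mathcal{F}_n)$. On the algebra side, the filtration $W_\bullet A$ induces a filtration on the reduced bar complex computing $\Tor^A(k,k)$, yielding a spectral sequence with first page $\Tor^{A^{gr}}(k,k)$ converging to $\Tor^A(k,k)$. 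Applying Theorem \ref{thm: KS20} to each pair $(W_i A, W_i \mathcal{F})$ and to the graded pieces matches the $E_1$ pages and the abutments.

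To identify the two spectral sequences in full, I would extract from the proof of Theorem \ref{thm: KS20} a chain-level comparison --- a natural quasi-isomorphism from the bar complex of $A$ to a complex computing $R\Gamma \mathcal{F}_n$ --- and observe that by functoriality it restricts to each pair $W_i A \leftrightarrow W_i \mathcal{F}$. This restriction is then a filtered quasi-isomorphism of filtered complexes, which induces an isomorphism of the associated spectral sequences at every page. The main obstacle is precisely this last step: promoting the KS20 comparison to a genuinely filtered statement, i.e.\ verifying that the chain-level construction commutes with the inclusions of subobjects rather than merely with the isomorphism classes of graded pieces and abutments. If the construction is packaged as a natural transformation, as is typical for such Koszul-style comparisons, the filtered compatibility is automatic; otherwise one must supply a separate naturality argument or directly check compatibility of the differentials page by page.
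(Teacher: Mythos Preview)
Your argument has a genuine gap at the very first step. You assert that the Kapranov--Schechtman equivalence is an equivalence of $k$-linear abelian categories and therefore preserves subobject lattices, so that sub-primitive-bialgebras $W_iA\hookrightarrow A$ correspond to sub-factorizable-perverse-sheaves $W_i\mathcal F\hookrightarrow\mathcal F$. But neither premise holds. First, $\text{PB}(\mathcal V)$ is a category of bialgebras with bialgebra morphisms; it is not claimed to be abelian, and categories of Hopf or bialgebras over a field typically are not. Second, and more fundamentally, a filtration of $A\in\text{PB}(\mathcal V)$ in the sense of Definition~\ref{def: finite filtration PB} is \emph{not} a chain of sub-bialgebras. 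The filtered pieces $F_sA$ are only subobjects in $\mathcal V$ (degreewise vector subspaces) satisfying $\mu(F_uA\otimes F_vA)\subseteq F_{u+v}A$ and the dual condition for $\Delta$; they need not be closed under multiplication, so $F_sA$ is generally not a bialgebra at all. The same remark applies to $F_s\mathcal F$. The paper says this explicitly after the theorem statement: ``neither the graded nor filtered parts are themselves primitive bialgebras or factorizable perverse sheaves.'' So there is no way to transport the filtration by applying the KS equivalence to each $F_sA$ or each $\Gr_sA$.

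The paper's route is substantively different: it encodes a filtration of $A$ as a single object $B\in\text{PB}(\mathcal W)$ where $\mathcal W$ is the braided monoidal abelian category of finitely generated graded $k[t]$-modules, with $B_n=\{\cdots\hookrightarrow F_0A_n\hookrightarrow F_1A_n\hookrightarrow\cdots\}$. Freeness of $B$ over $k[t]$ is exactly the condition that the diagram maps are injective. One then reproves a version of the KS equivalence over $\mathcal W$ (Proposition~\ref{prop: b equivalence}), which is delicate because $\otimes$ is no longer bi-exact in $\mathcal W$; the freeness hypothesis on both sides is precisely what keeps derived tensor products concentrated in the correct degree. The spectral sequence comparison then falls out because Proposition~\ref{prop: equivalence properties}(d) over $\mathcal W$ identifies the stalk $(\mathcal G_n)_0$ with $B_n(B)$ as filtered complexes, not merely as objects with matching $E_1$ pages. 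Your proposed ``filtered quasi-isomorphism by naturality'' is the right shape of statement, but it requires this $k[t]$-module packaging to make sense, since there is no functorial comparison one can restrict to $F_sA$ when $F_sA$ is not itself in $\text{PB}(\mathcal V)$.
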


Our theorem is not a direct consequence of Theorem \ref{thm: KS20}. One could naively think that we can we can somehow apply the equivalence to each graded part or filtered parts, but the problem is that neither the graded nor filtered parts are themselves primitive bialgebras or factorizable perverse sheaves. 

Instead, our proof is based on the philosophy that filtrations of $k$-vector spaces are equivalent to free graded $k[t]$ modules. We construct a braided monoidal abelian category $\mathcal W$ of finitely generated graded $k[t]$-modules which have objects finite $\Z$-indexed diagrams $\{\cdots \rightarrow V_0\rightarrow V_1\rightarrow \cdots\}$ with $V_i\in \mathcal V$. Then, we prove the following diagram of equivalences.
\begin{equation}\label{eqn: diagram for equivalence proof}
\begin{tikzcd}
	\begin{array}{c} \left\{ B\in \text{PB}(\mathcal W) \text{ free over } k[t]\right\} \end{array} & \begin{array}{c} \left\{ \mathcal G\in \text{FPS}(\mathcal W) \text{ with }\Phi_{\tot}(\mathcal G_n)\text{ free over }k[t] \right\} \end{array} \\
	\begin{array}{c} \left\{ \text{Filtrations of } A \in \text{PB}(\mathcal V)\right\} \end{array} & \begin{array}{c} \left\{ \text{Filtrations of } \mathcal F \in \text{FPS}(\mathcal V) \right\} \end{array}
	\arrow["b"', tail reversed, from=1-1, to=1-2]
	\arrow["c"', tail reversed, from=1-2, to=2-2]
	\arrow["a"', tail reversed, from=2-1, to=1-1]
	\arrow[tail reversed, from=2-1, to=2-2]
\end{tikzcd}
\end{equation}
Here, (a) and (c) are essentially extensions of the philosophy above. While (a) is easy to prove, (c) is significantly more difficult because we are working with perverse sheaves over $k[t]$. These are not well behaved because $\otimes$ is not exact, for example the dual $\mathbb D$ or exterior tensor product $\boxtimes$ of two perverse sheaves may fail to be perverse. For (b), we want to apply Theorem \ref{thm: KS20} to $\mathcal W$ but again the problem is that $\otimes$ is not bi-exact -- in fact, Theorem \ref{thm: KS20} is false without this assumption. Nevertheless, we modify the proof essentially by deriving all tensor products and prove that (b) still holds because of the freeness condition on both sides. Lastly, the isomorphism of spectral sequences will follow from the isomorphism of cohomologies in (b).
\subsection{Comparing filtrations}
We apply Theorem \ref{thm: equivalence of filtrations} to compare filtrations on the geometric side and the algebra side. We restrict to the equivalence between filtrations of pushforward of compatible local systems $j_*\mathcal L\in \text{FPS}(\mathcal V)$ and filtrations of quantum shuffle algebras $\mathfrak A(V)\in \text{PB}(\mathcal V)$ as this is the case most relevant to Hurwitz spaces. This is also dual to the equivalence between $j_!\mathcal L$ and the tensor algebra $T(V)$. We make the following two comparisons.

First, we consider the word length filtration in \cite{ETW17} which is the decreasing filtration denoted by $F^k(\mathfrak A)=(\mathfrak A_{>0})^k$. We find that it is more natural geometrically to consider the shifted word length filtration defined by on each algebraic degree $n$ by $F_c(\mathfrak A_n)=F^{n-c}(\mathfrak A_n)$ which is now an increasing filtration. Under Theorem \ref{thm: equivalence of filtrations}, we find that this corresponds to the codimension filtration on $j_*\mathcal L$, defined such that the graded pieces $\Gr_cj_*\mathcal L$ consists of middle extensions from codimension $c$ strata, in particular we have $\Gr_0j_*\mathcal L=F_0j_*\mathcal L=j_{!*}\mathcal L$. This is not surprising because we expect elements in $A_{\lambda_1}\times \cdots A_{\lambda_r}$ to come from codimension $\lambda_1+\cdots +\lambda_r-r$. We also mention a very slight generalization which is that any filtration on the stratification of $\Sym^n(\C)$ induces a filtration of $j_*\mathcal L$ which corresponds to a weighted word length filtration on $\mathfrak A$.

Next, we discuss the weight filtration. Suppose that $\mathcal L_n$ has finite monodromy, so we can define the geometric weight filtration on $j_{*}\mathcal L$ using mixed Hodge modules. We find that it corresponds to an increasing filtration $W_w\mathfrak A$ on the quantum shuffle algebra which we call the algebraic weight filtration. To the best of our knowledge this has not appeared in the literature before, and we define it via the dual tensor algebra $T=T(V^*)=\bigoplus_{n=0}^\infty (V^*)^{\otimes n}$ as follows.
\begin{definition}\label{defn: alg weight filtration}
The \textit{algebraic weight filtration} $W_wT_n$ on the dual tensor algebra $T$ is defined inductively on $n$, where for $n=0,1$ it is given by the trivial filtration concentrated in weight $0$, i.e. $W_0T_0=T_0$,  $W_0T_1=T_1$ and $W_{-1}T_0=W_{-1}T_1=\{0\}$. For $n\ge 2$, given the filtration on $T_p$ for $p<n$, there is an induced filtration on 
$$S=\bigoplus_{\substack{p+q=n\\0<p,q<n}} T_p\otimes T_q\subseteq (T\otimes T)_n$$
and the filtration on $T_n$ is defined by
$$W_wT_n= \mu (W_wS)+ \left(\mu(W_{w+1}S)\cap P_n\right),$$
where $P_n\subseteq T_n$ are the primitive elements in $T$, i.e. those that satisfy $\Delta_\star(x)=1\otimes x + x\otimes 1$. 

From this, we obtain the \textit{algebraic weight filtration} $W_w\mathfrak A_n$ on the quantum shuffle algebra $\mathfrak A_n$ by $W_w\mathfrak A_n=(W_{-w-1}T_n)^\perp$ so that the graded piece $\Gr_w\mathfrak A_n$ is dual to $\Gr_{-w}T_n$.
\end{definition}
Intuitively, the weight filtration on $W_wT_n$ respects the multiplication $\mu$ on $T$, except it goes down by $1$ on primitive elements. From this description, we can see that the weight zero part $W_0\mathfrak A$ of the quantum shuffle algebra is the Nichols subalgebra $\mathfrak B$ generated by degree $1$ elements. We will show that the algebraic weight filtration is indeed a bialgebra filtration under the assumption that $\mathcal L_n$ has finite monodromy, but we suspect that this may not be true in the general case.

Our method for comparing filtrations across Theorem \ref{thm: equivalence of filtrations} is a spectral sequence argument. The idea is to split $j_{n*}\mathcal L_n$ into two parts, the first part from the origin and the second part an intermediate extension from outside the origin. Using factorizability we can understand the second part from lower degrees. Then, constraints on the differentials of a spectral sequence helps us understand the first part from the second. For example, for the weight filtration we crucially rely on the property that the only nontrivial differentials are those between complexes which differ in weight by $1$.
\subsection{Consequences of the weight filtration} Now that we have related the geometric weight filtration on factorizable perverse sheaves over $\C$ with the algebraic weight filtration on quantum shuffle algebras, we need a comparison theorem for the geometric weight filtration over $\F_p$ and $\C$ in order for our results to be relevant for counting $\F_p$-points via the Grothendieck-Lefschetz trace formula. We specialize to the case of Hurwitz spaces and prove Theorem \ref{thm: comparison theorem} which shows that the cohomologies of the weight filtration over $\F_p$ and $\C$ are isomorphic, generalizing \cite[Proposition 7.7, 7.8]{EVW16} which compares the cohomology of the Hurwitz space over $\F_p$ and $\C$. The proof of Theorem \ref{thm: comparison theorem} uses the normal crossings compactification of Hurwitz space that was constructed in \cite[Appendix B]{ellenberg2025homologicalstabilitygeneralizedhurwitz} by Dori Bejleri and Aaron Landesman. This allows us to construct the weight filtration over $\Z_p$ and use a vanishing cycle argument to finish the proof.

This comparison theorem, along with the equivalence of the geometric and algebraic weight filtrations discussed above, enable us to compute the cohomological weights of Hurwitz spaces explicitly from the algebraic weight filtration on the corresponding quantum shuffle algebra as follows. Let $X_n$ be a family of Hurwitz spaces with corresponding quantum shuffle algebra $\mathfrak A$. We can decompose 
$$H^{2n-i}_c((X_n)_{\overline\F_p},\overline\Q_\ell)=\bigoplus_{w\in \Z} \Gr^w_WH^{2n-i}_c((X_n)_{\overline\F_p},\overline\Q_\ell)$$
into a direct sum of weight spaces where Frobenius acts by $q^{w/2}$ on the $w$-th graded piece. On the other hand, the algebraic weight filtration $W_w\mathfrak A_n$ induces a similar weight decomposition on $\Tor^{\mathfrak A}_{-j,n}(k,k)$ by the spectral sequence $\Tor^{\mathfrak A^{gr}}_{-j,n}(k,k)\Rightarrow \Tor^{\mathfrak A}_{-j,n}(k,k)$ in Theorem \ref{thm: equivalence of filtrations}. Here, $\mathfrak A^{gr}$ is the associated graded of $\mathfrak A$ with respect to the weight filtration, and this is bigraded with respect to the algebra grading $n$ and weight grading $w$. Taking the dual \textit{without} flipping the sign of the weight grading $w$ gives the decomposition of Ext-cohomology $$\Ext_{\mathfrak A}^{-j,n}(k,k)=\bigoplus_{w\in \Z} \Gr^w_W \Ext_{\mathfrak A}^{-j,n}(k,k).$$

Then, we have the following theorem that relates the two decompositions.
\begin{theorem}\label{thm: relate weight decompositions}
Let $p>|G|$ and $\ell>n$ be primes with $p\neq \ell$. The isomorphism of vector spaces $$H^{2n-i}_c((X_n)_{\overline\F_p},\overline\Q_\ell)\cong \Ext_{\mathfrak A}^{n-i,n}(k,k)$$ coming from \cite[Theorem 1.3]{ETW17} induces an isomorphism on the weight subspaces
$$\Gr^{2n-i-w}_WH^{2n-i}_c((X_n)_{\overline\F_p},\overline\Q_\ell)\cong \Gr^w_W \Ext_{\mathfrak A}^{n-i,n}(k,k)$$
\end{theorem}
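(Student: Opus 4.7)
The plan is to combine three principal ingredients from earlier in the paper: Theorem \ref{thm: comparison theorem} to reduce from $\overline{\F}_p$ to $\C$; Theorem \ref{thm: equivalence of filtrations} together with the preceding identification of the geometric and algebraic weight filtrations; and Poincaré duality to match up degrees and weights. First, I would invoke Theorem \ref{thm: comparison theorem} under the hypotheses $p>|G|$, $\ell>n$ to identify the étale weight filtration on $H^{2n-i}_c((X_n)_{\overline\F_p},\overline\Q_\ell)$ with the mixed Hodge weight filtration on $H^{2n-i}_c((X_n)_\C,\Q_\ell)$. This reduces the theorem to a purely characteristic-zero statement in which we may work with mixed Hodge modules.

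Next, since $X_n\to \Sym^n_{\neq}(\C)$ is a finite étale cover, pushforward of the constant sheaf produces a local system $\mathcal L_n$ of finite monodromy, and $j_*\mathcal L=\{j_{n*}\mathcal L_n\}$ assembles into the FPS corresponding to the quantum shuffle algebra $\mathfrak A(V)$ under Theorem \ref{thm: KS20}. The cohomology identity in Theorem \ref{thm: KS20}, combined with Poincaré duality and $\Tor/\Ext$ dualization, gives the underlying vector space isomorphism $H^{2n-i}_c(X_n)\cong \Ext^{n-i,n}_{\mathfrak A}(k,k)$ of \cite[Theorem 1.3]{ETW17}. I would then apply Theorem \ref{thm: equivalence of filtrations} to the geometric weight filtration on $j_*\mathcal L$, which by the discussion following Definition \ref{defn: alg weight filtration} corresponds to the algebraic weight filtration on $\mathfrak A$. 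The resulting isomorphism of spectral sequences $R\Gamma^{-j}(j_*\mathcal L)^{\mathrm{gr}}_n\Rightarrow R\Gamma^{-j}(j_*\mathcal L)_n$ and $\Tor^{\mathfrak A^{\mathrm{gr}}}_{-j,n}(k,k)\Rightarrow \Tor^{\mathfrak A}_{-j,n}(k,k)$ matches the filtrations on the two sides of the ETW17 isomorphism. Strictness of the weight filtration on mixed Hodge modules forces $E_2$-degeneration of the geometric spectral sequence, so $E_\infty$ computes the actual weight-graded pieces of $R\Gamma^*(j_{n*}\mathcal L_n)$; the isomorphism of spectral sequences then propagates this degeneration to the algebra side and identifies graded pieces.

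The last step is to align the indexing conventions. The convention of Definition \ref{defn: alg weight filtration} that $\Gr_w \mathfrak A_n$ is dual to $\Gr_{-w} T_n$, together with the author's convention (stated immediately after the theorem) that the weight grading on $\Ext^{n-i,n}_{\mathfrak A}$ is induced from that on $\Tor^{\mathfrak A}_{-(n-i),n}$ by dualizing \emph{without} flipping sign, means that the weight-$w$ piece of $\Ext^{n-i,n}_{\mathfrak A}(k,k)$ matches the weight-$w$ piece of the $\Tor$. On the geometric side, Poincaré duality $H^{2n-i}_c(X_n)\cong H^i(X_n)^\vee(-n)$ sends the weight-$w'$ piece of $H^i$ to the weight-$(2n-w')$ piece of $H^{2n-i}_c$, which produces the $w\mapsto 2n-i-w$ shift appearing in the statement.

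The main obstacle is not conceptual but bookkeeping: essentially all the content lies in the already-proved theorems. One must carefully verify (i) compatibility of the $\overline\F_p$ versus $\C$ comparison with the weight indexing, which follows from the construction of the weight filtration over $\Z_p$ via the Bejleri--Landesman normal crossings compactification used in the proof of Theorem \ref{thm: comparison theorem}; (ii) that the $E_2$-degeneration of the weight spectral sequence propagates across the equivalence; and (iii) that the various sign flips and Tate twists combine precisely to give $w\mapsto 2n-i-w$, rather than an off-by-one variant. Once these checks are in place, the theorem follows by composing the identifications.
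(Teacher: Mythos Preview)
Your proposal is correct and follows essentially the same route as the paper: the argument in Section~\ref{sec: construction weight filtration} assembles exactly the three ingredients you list (Theorem~\ref{thm: comparison theorem} for the $\overline\F_p$/$\C$ comparison, Proposition~\ref{prop: equiv weight filtration} plus Theorem~\ref{thm: equivalence of filtrations} for the geometric/algebraic weight identification, and the Poincar\'e duality/Tate twist bookkeeping producing the $w\mapsto 2n-i-w$ shift). Your explicit mention of $E_2$-degeneration via strictness of the mixed Hodge weight filtration makes precise a step the paper leaves implicit when it asserts that the spectral sequences ``induce the respective weight filtrations on the abutment.''
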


Note that there is a shift in weights between $2n-i-w$ and $w$ above. This is because the weights on the algebra side come from the weights of perverse sheaves, and because these are objects in the derived category the weight here is shifted by cohomological degree. It is important to keep track of which weights are derived weights (all weights on algebras and perverse sheaves), and which are Frobenius weights.

Our next goal is to understand the distribution of weights in $\mathfrak A_n$ for a general quantum shuffle algebra. Since the weight filtration can be defined algebraically, this is a purely algebraic question even though it has consequences for Hurwitz spaces. We find that as $n\rightarrow \infty$, most of the weights are concentrated around some $cn$, as shown in the following theorem. 
\begin{theorem}\label{thm: concentration of weight in algebra}
Let $W_w\mathfrak A$ be the weight filtration on the quantum shuffle algebra $\mathfrak A(V)$. Then, there exists some constant $0\le c\le 1$ where for any $c^-<c<c^+$, we have
$$\frac{\dim W_{c^-n}\mathfrak A_n}{\dim\mathfrak A_n}\rightarrow 0, \qquad \frac{\dim W_{c^+n}\mathfrak A_n}{\dim\mathfrak A_n}\rightarrow 1$$
as $n\rightarrow \infty$. Furthermore, $c=0$ if and only if $\mathfrak A$ is generated in degree $1$, i.e. $\mathfrak A(V)=\mathfrak B(V)$.
\end{theorem}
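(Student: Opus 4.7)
The plan is to package the weight data into the $q$-Hilbert series
\[
P_n(q) \;=\; \sum_{w \in \Z} \dim(\Gr_w \mathfrak{A}_n)\, q^w,
\]
so that $P_n(1) = \dim \mathfrak{A}_n$. Since the weights lie in $[0,n]$, $P_n$ is a polynomial in $q$ of degree at most $n$. The theorem amounts to concentration of the probability measure $w \mapsto \dim(\Gr_w \mathfrak{A}_n)/\dim \mathfrak{A}_n$ around $w = cn$, and I will follow the standard Fekete-plus-Chernoff route: produce a convex limit $L(q) := \lim_n n^{-1}\log P_n(q)$ and read off $c$ as the right-derivative $L'(1^+)$.

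The key algebraic input is that the algebraic weight filtration is a bialgebra filtration (as stated just after Definition~\ref{defn: alg weight filtration}), so $\mathfrak{A}^{gr}$ is a bigraded primitive bialgebra whose multiplication and comultiplication both preserve the $(n,w)$-bigrading. Combining this with the primitivity hypothesis (which makes the reduced comultiplication injective in positive degree) yields the submultiplicativity $P_{n+n'}(q) \le P_n(q) P_{n'}(q)$ for all $q > 0$. Fekete's lemma then produces a finite limit $L(q) = \inf_n n^{-1}\log P_n(q)$, and convexity of $L$ in $\log q$ follows from H\"older applied to each $P_n$. Setting $c = L'(1^+)$, convexity furnishes, for every $c^+ > c$, some $q^+ > 1$ with $L(q^+) - c^+\log q^+ < L(1)$; the Chernoff bound
\[
\sum_{w > c^+ n} \dim(\Gr_w \mathfrak{A}_n) \;\le\; (q^+)^{-c^+n} P_n(q^+) \;=\; e^{(L(q^+) - c^+\log q^+)\,n \,+\, o(n)}
\]
is then exponentially smaller than $\dim \mathfrak{A}_n = e^{L(1) n + o(n)}$, giving the right-tail half of concentration. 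The left tail is symmetric, using $q^- < 1$ and the left-derivative.

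For the dichotomy $c = 0 \iff \mathfrak{A} = \mathfrak{B}$: if $\mathfrak{A} = \mathfrak{B} = W_0\mathfrak{A}$, then $P_n(q) \equiv \dim \mathfrak{A}_n$ is constant in $q$, so $L$ is constant and $c = 0$. Conversely, suppose $\mathfrak{A} \neq \mathfrak{B}$, and pick the minimal $n_0$ with $\Gr_{w_0}\mathfrak{A}_{n_0}\neq 0$ for some $w_0 \ge 1$. Because $W_0 \mathfrak{A}$ is exactly the subalgebra of $\mathfrak{A}$ generated by $V$, the new class can be chosen to be primitive in $\mathfrak{A}^{gr}$. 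A braided PBW-type analysis of $\mathfrak{A}^{gr}$ relative to its Nichols subalgebra then yields that this new primitive produces exponentially many elements whose weights scale linearly with degree, which forces $L(q) > L(1)$ for some $q > 1$ close to $1$, hence $c = L'(1^+) > 0$. The main obstacle is precisely this last step: establishing \emph{strict} exponential separation between the growth rates of $\mathfrak{B}$ and the subalgebra generated by $\mathfrak{B}$ together with the new primitive, rather than merely a polynomial excess. The same structural input (braided PBW / quantum-shuffle decomposition) also underwrites the submultiplicativity estimate used in the concentration step, where primitivity is needed to rule out cancellation in the comultiplication.
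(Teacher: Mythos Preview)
Your approach has a real gap at the submultiplicativity step. You claim $P_{n+n'}(q)\le P_n(q)P_{n'}(q)$ for all $q>0$, but the only elementary fact available is that \emph{concatenation in $T$} respects the weight filtration; dually, the free comultiplication $\Delta_{p,q}\colon\mathfrak A_{p+q}\to\mathfrak A_p\otimes\mathfrak A_q$ is filtered. Since $\Delta_{p,q}$ is a linear isomorphism, this says the weight distribution on $\mathfrak A_{p+q}$ stochastically \emph{dominates} the convolution of the distributions on $\mathfrak A_p$ and $\mathfrak A_q$. That yields $P_{p+q}(q)\ge P_p(q)P_q(q)$ for $q\ge 1$ and the reverse for $q\le 1$: the inequality flips across $q=1$, so you do not get a single convex limit $L(q)$ via Fekete on both sides, and your Chernoff bound for the upper tail is not available as written. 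Relatedly, the assertion that the reduced comultiplication on $\mathfrak A^{gr}$ is injective in positive degree is false: its kernel is the space of primitives, and $\mathfrak A^{gr}$ can acquire new primitives in degree $\ge 2$ precisely because the induced graded map $\Gr\Delta_{p,q}$ need not be injective. Note also that the paper only establishes that $W_\bullet\mathfrak A$ is a \emph{bialgebra} filtration under the finite-monodromy hypothesis and explicitly avoids using it here, so leaning on the full bialgebra structure of $\mathfrak A^{gr}$ would in any case restrict the generality.

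The paper's route sidesteps all of this by working on $T$ with only the obvious fact that concatenation preserves weight. Choosing a weight-compatible basis of $T_n$, the mean weight $a(n)=(\dim T_n)^{-1}\sum_w w\dim\Gr_wT_n$ satisfies $a(nm)\le m\,a(n)$ and $a(n+1)\le a(n)$, so $a(n)/n\to -c:=\inf_n a(n)/n$. The $c^-$ tail then follows from the weak law of large numbers applied to i.i.d.\ concatenations of basis words (each concatenation gives only an \emph{upper} bound on weight, which is the correct direction), and the $c^+$ tail follows from a first-moment (Markov) argument using $a(n)/n\ge -c$ together with the already-established $c^-$ bound. This also makes the dichotomy immediate: $c=0$ forces $a(n)=0$ for every $n$, hence every weight is zero and $\mathfrak A=W_0\mathfrak A=\mathfrak B$; no braided PBW input is needed.
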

One can ask whether an analogous statement of concentration of weights in cohomology is true.
\begin{question}
Does there exist some constant $0\le c\le 1$, not necessarily the same as the one in Theorem \ref{thm: concentration of weight in algebra}, such that for any $c^-<c<c^+$ we have
$$\frac{\dim W_{c^-n}\Ext^n_{\mathfrak A_n}(k,k)}{\dim\Ext^n_{\mathfrak A_n}(k,k)}\rightarrow 0, \qquad \frac{\dim W_{c^+n}\Ext^n_{\mathfrak A_n}(k,k)}{\dim\Ext^n_{\mathfrak A_n}(k,k)}\rightarrow 1$$
as $n\rightarrow \infty$?
\end{question}
This cohomological statement would have the following geometric consequence. Let $X_n$ be a family of Hurwitz spaces which correspond to some quantum shuffle algebra $\mathfrak A(V)$. Then as $n\rightarrow \infty$, almost all weights of Frobenius $w$ on $H^*_c(X_n,\overline\Q_\ell)$ satisfy $w\in [2n-i-c^+n,2n-i-c^-n]$ where $2n-i$ is the cohomological degree. 

The key problem in deducing concentration of weight for cohomology from Theorem \ref{thm: concentration of weight in algebra} is that while we have control over the weights of the bar-complex, large cancellations may happen when taking cohomology for weights in the concentrated range while little to no cancellations occur for weights outside the concentrated range. 

Finally, we discuss the proof of Theorem \ref{thm: finite nichols algebra}. We follow the steps of \cite{ETW17} except now in the context of the weight filtration instead of the word length filtration. Recall that $\mathfrak A^{gr}$ is the associated graded of $\mathfrak A$ with respect to the weight filtration which is bigraded in $n$ and $w$, and that the Nichols algebra $\mathfrak B\subseteq \mathfrak A^{gr}$ is the weight zero part. Define $\mathfrak C\coloneqq \mathfrak A^{gr}\Box_{\mathfrak B}k$ to be the cotensor product. Then, we have the three spectral sequences
\begin{equation}\label{eqn: triple spectral sequences}
\Ext_{\mathfrak B}(k,k)\otimes \Ext_{\mathfrak C}(k,k) \Rightarrow \Ext_{\mathfrak B}(k,\Ext_{\mathfrak C}(k,k))\Rightarrow \Ext_{\mathfrak A^{gr}}(k,k)\Rightarrow \Ext_{\mathfrak A}(k,k)
\end{equation}
which respect the algebra, homological and weight gradings. This gives the weight decomposition of Ext-cohomology which is what we want by Theorem \ref{thm: relate weight decompositions}.

Suppose that $\mathfrak B$ is finite-dimensional and that $\Ext_{\mathfrak B}(k,k)$ is finitely generated. It is conjectured that the first implies the latter, and this was proven in various cases such as for Nichols algebra of diagonal type \cite{PointedHopf} and for the Fomin-Kirillov algebra $\mathcal{FK}_3$ \cite{FK3} which corresponds to the case of cubic fields. The finite-dimensionality of $\mathfrak B$ will tell us that on $\mathfrak C$ the weight grading $w\ge \delta n$ increases linearly in algebraic degree. Meanwhile, the finite generation of $\Ext_{\mathfrak B}(k,k)$ tells us that its homological degree $i\ge cn$ increases linearly in terms of the algebraic degree. Putting these two inequalities together with the chain of spectral sequences gives us Theorem \ref{thm: finite nichols algebra}.
\subsection{Acknowledgments}
I would like to thank my advisor Will Sawin for suggesting this problem, and for providing me much valuable guidance along the way. I would also like to thank Jordan Ellenberg, Aaron Landesman and Craig Westerland for their helpful comments. I am especially grateful to Aaron for providing detailed comments which led to significant improvements.
\section{Preliminaries}
\subsection{Hurwitz spaces} Let $G$ be a group and $c$ be a union of conjugacy classes inside $G$. In this section, we discuss the topological definition of the Hurwitz spaces $\Hur^{G,c}_n$, then explain how they can also be defined algebraically. 
\subsubsection{Topological Hurwitz spaces} We follow \cite[Section 2]{EVW16}, except that we look at configuration space over $\C$ rather than over a disk $D$, which is the same as they are homeomorphic to one another. First, we introduce the diagonal stratification $S_n=\{S_\lambda\}_\lambda$ on $\Sym^n(\C)$ with strata indexed by partitions of $n$ written as $\lambda = (\lambda_1\ge \cdots \ge \lambda_p)$ with $\lambda_1+\cdots +\lambda_p=n$ and 
$$S_\lambda = \left\{\sum_{i=1}^p\lambda_i x_i\in \Sym^n(\C)\ \middle \vert \ x_i \text{ distinct}\right\}.$$ 
For example, we have that the generic stratum $\Sym^n_{\neq}(\C)= S_{(1^n)}$ is the configuration space of $n$ unordered distinct points on $\C$. 

Let $c_n=\{1,2,\ldots, n\}$ be a chosen basepoint, then the fundamental group $\pi_1(\Sym^n_{\neq}(\C),c_n)\cong B_n$ is the braid group on $n$ strands which has the following presentation
$$B_n\coloneqq \langle \sigma_1,\ldots,\sigma_{n-1}\mid \sigma_i\sigma_{i+1}\sigma_i=\sigma_{i+1}\sigma_i\sigma_{i+1}, \sigma_i\sigma_j=\sigma_j\sigma_i \text{ if }|i-j|>1\rangle.$$
In the isomorphism above, the generator $\sigma_i$ corresponds to the half Dehn twist swapping the points $i$ and $i+1$ by moving them counterclockwise around each other. 

There is a $B_n$ action on $c^{n}$ where generators act via
$$\sigma_i(g_1,\ldots, g_n)=(g_1,\ldots, g_{i-1},g_{i+1},g_{i+1}^{-1}g_ig_{i+1},g_{i+2},\cdots,g_n).$$
Topologically, this corresponds to the action of $B_n$ on $\Hom(\pi_1(\C-\{1,\ldots, n\},*),G)$ for some basepoint $*$ and generators $\gamma_1,\ldots, \gamma_n$ around the punctures, where we recover $g_i$ from the image of $\gamma_i$. With this, we can define topological Hurwitz spaces as follows.
\begin{definition}
Let $\Hur^{G,c}_n$ be the finite unramified cover of $\Sym^n_{\neq}(\C)$ with fibers $c^n$ and monodromy given by the $B_n$ action on $c^n$. Equivalently, we have
$$\Hur^{G,c}_n=\widetilde{\Sym^n_{\neq}(\C)}\times_{B_n}c^n$$
where $\widetilde{\Sym^n_{\neq}(\C)}\rightarrow \Sym^n_{\neq}(\C)$ is the universal cover.
\end{definition}
One can prove that $\Hur^{G,c}_n$ is the moduli space of $n$-branched $G$-covers of a disk $D$ that have a marked point on the boundary of the cover, such that the local monodromy around each branch point lies in $c$ and they are not necessarily connected. Here, we needed to use the homeomorphism from $\C$ to $D$ so that we can mark a point on the boundary of the cover, this roughly corresponds to marking a point of the cover over $\C$ that is ``near infinity''. The point of this marking is to rigidify the cover and get rid of the $G$-automorphisms. To see the correspondence between the earlier definition and the moduli space interpretation, we note that the projection to $\Sym^n_{\neq}(\C)$ determines the branch points, and the tuple $(g_1,\ldots, g_n)$ determines the local monodromy of the cover. 

Similarly, we can define the connected Hurwitz space $\CHur^{G,c}_n\subseteq \Hur^{G,c}_n$ to be the union of connected components that parametrize the marked $n$-branched $G$-covers of $D$ which are connected. This is equivalent to requiring covers to have full monodromy, so it is the cover of $\Sym^n_{\neq}(\C)$ with fibers $C\coloneqq\{(g_1,\ldots, g_n)\in c^n \mid G=\langle g_1,\ldots,g_n\rangle\}$ with the same action of $B_n$ which we write as
$$\CHur^{G,c}_n=\widetilde{\Sym^n_{\neq}(\C)}\times_{B_n} C.$$

There is a $G$-action on both $\Hur^{G,c}_n$ and $\CHur^{G,c}_n$ corresponding to moving the marked point on the boundary of the cover, and we can define the quotients $\Hur^{G,c}_n/G$ and $\CHur^{G,c}_n/G$. Now, these are the moduli spaces of unmarked $n$-branched $G$-covers of $D$ (or $\C$, since we no longer require the marking), where the latter also requires these covers to be connected. We write these as
$$\Hur^{G,c}_n/G=\widetilde{\Sym^n_{\neq}(\C)}\times_{B_n} c^n/G,$$
$$\CHur^{G,c}_n/G=\widetilde{\Sym^n_{\neq}(\C)}\times_{B_n} C/G.$$
\subsubsection{Algebraic Hurwitz spaces}\label{sec: alg Hurwitz spaces} First, we define $\Sym^n(\A^1)$ and its diagonal stratification algebraically over $\Z$. We can view $\Sym^n(\A^1)\cong \A^n$ via the isomorphism given by sending $n$ points to the (coefficients of) monic degree $n$ polynomial with these $n$ roots. Given any partition $\lambda$ of $n$, we let $S_{\lambda}$ to be the reduced locally closed stratum that consists of polynomials with roots given by the partition $\lambda$, just like in the case of $\C$. For example, $\Sym^n_{\neq}(\A^1)=S_{(1^n)}$ is the open subscheme of monic polynomials with no repeated roots, so when viewed inside $\A^n$ it is the complement of the vanishing locus of the discriminant.

We want an algebraic theory of Hurwitz spaces over nonzero characteristic for our applications of counting over finite fields, but first we argue that these spaces are algebraic over $\C$. Riemann's existence theorem tells us that finite topological coverings of a complex algebraic variety are equivalent to finite étale covers. Applying this to our case of coverings over $\Sym^n_{\neq}(\A^1_\C)$, we see that $\Hur^{G,c}_n,\CHur^{G,c}_n,\Hur^{G,c}_n/G,\CHur^{G,c}_n/G$ can be defined as algebraic varieties over $\C$, and their $\C$-points agree with the topological definition given above. 

In \cite[Section 2.1]{landesman2025cohenlenstramomentsfunctionfields}, the authors extended this and showed that $\Hur^{G,c}_n$ (and hence $\CHur^{G,c}_n$) can be defined as a scheme over $\Z[\frac{1}{|G|}]$ which is finite étale over $\Sym^n_{\neq}(\A^1)$. Furthermore, they showed that $\Hur^{G,c}_n/G$ (and hence $\CHur^{G,c}_n/G$) are algebraic stacks over $\Z[\frac{1}{|G|}]$. Hence, for all primes $p$ not dividing $|G|$, we can define these Hurwitz spaces over $\F_p$ as the special fiber of the construction.

These spaces over $\Z[\frac{1}{|G|}]$ were defined using a moduli interpretation. Let $T$ be a scheme over $\Spec \Z[\frac{1}{|G|}]$, then the $T$-points of $\Hur^{G,c}_n/G$ are easy to define, they are basically finite $G$-covers over $\mathbb P^1_T$ where the local monodromy of a point over $\A^1_T$ lies in $c$, see Definition 2.1.1 of loc. cit. for more details. This is analogous to the topological moduli space discussed previously. The moduli space of $\Hur^{G,c}_n$ is more difficult, because we need to mark an unramified point in the cover ``near infinity". This is not a problem if $\infty$ is not a branch point, as we can simply mark a point over infinity which is already unramified. On the other hand, if $\infty$ is a branch point with inertia group of order $e$, then Definition 2.1.3 of loc. cit. uses a root stack of order $e$ along $\infty$ of $\mathbb P^1$ as defined in \cite[Definition 2.2.4]{Cadman_2007} to perform this marking. We omit the details, but intuitively, this technique lets us ``mark $1/e$ of a ramification point''.

Finally, we explain the discussion in Section \ref{sec: cubic fields} of the introduction in more detail. The connected components of $Y_{2N}$ that come from $X_{2N}$ are the connected components of $\CHur^{G,c}_n/G$ which don't ramify at infinity (so the discriminant remains $2N$), where here we have $n=2N$, $G=S_3$ and $c$ conjugacy class of transpositions. We take $\CHur^{G,c}_n$ because for counting cubic fields over $\F_p(t)$ we only want connected covers, and then we remove the marked point by taking the $G$-quotient (refer to \cite[Section 10]{landesman2025homologicalstabilityhurwitzspaces} for more details). The cohomology of $\CHur^{G,c}_n/G$ is simply the $G$-invariant cohomology of $\CHur^{G,c}_n$, which appears as a subspace of the cohomology of $\Hur^{G,c}_n$. Hence, the weight bound in Theorem \ref{thm: S3 transposition} for $\Hur^{G,c}_n$ also applies to $\CHur^{G,c}_n/G$.
\subsection{Shuffle algebras}
Here, we define primitive bialgebras in the context of braided monoidal abelian categories, and discuss the three important examples of tensor algebras $T(V)$, quantum shuffle algebras $\mathfrak A(V)$ and Nichols algebras $\mathfrak B(V)$. We follow the exposition in \cite[Section 2]{ETW17} and \cite[Section 2,3]{KS20}.
\subsubsection{Braided monoidal abelian categories} We first discuss the notion of a braided vector space over $k$.
\begin{definition}
A \textit{braided vector space} $(V,R)$ over $k$ consists of a finite-dimensional $k$-vector space $V$ with a braiding $R\colon V\otimes V\xrightarrow{\sim} V\otimes V$ which satisfies the Yang-Baxter equation
$$(R\otimes \id)\circ (\id \otimes R)\circ (R\otimes \id)=(\id \otimes R)\circ (R\otimes \id)\circ (\id \otimes R).$$
\end{definition}
This relation is exactly what is required for $V^{\otimes n}$ to be a representation of $B_n$ where $\sigma_i$ acts on the $i$ and $i+1$-th copies of $V$ by $R$. 
\begin{example}\label{eg: braided vector spaces}
We give several examples of braided vector spaces $(V,R)$ in increasing generality.
\begin{enumerate}[(\alph*)]
    \item Trivial braided vector space. This has braiding given by swapping the terms $R(v\otimes w)=w\otimes v$. 
    \item Braided vector spaces of diagonal type. Let $\{x_i\}_i$ be a basis of $V$, and $(q_{ij})$ be a matrix with entries in $k^\times$. We define the braiding on the basis $x_i\otimes x_j$ by $R(x_i\otimes x_j)=q_{ij}x_j\otimes x_i$.
    \item Braided vector spaces of rack type. A \textit{rack} is a set $S$ with binary operation $(a,b)\mapsto b^a$ satisfying (i) $(c^a)^{b^a}=(c^b)^a$ and (ii) for each $a,b\in S$, there is a unique $c\in S$ with $c^a=b$ which we denote as $c={}^ab$. A rack $2$-cocycle is a function $x\colon S\times S\rightarrow k^\times$ satisfying $x_{ab}x_{a^bc}=x_{ac}x_{a^cb^c}$. Given a rack and a corresponding cocycle, we can define the braided vector space $V(S,x)=kS$ with braiding $R(a\otimes b)=x_{ab}(b\otimes a^b)$. 
 \end{enumerate}
\end{example}
Instead of just considering one particular braided vector space, we want to consider compatible braidings for objects in a category, which motivates the following definition.
\begin{definition}
A \textit{braided monoidal abelian category} $(\mathcal V,\otimes ,1,R)$ over $k$ is a monoidal $k$-linear abelian category $(\mathcal V,\otimes, 1)$, and for any $V,W\in \mathcal V$ there is a braiding $R_{V,W}\colon V\otimes W\xrightarrow\sim W\otimes V$ which satisfies the braiding axioms, see \cite[Definition 3.2.1] {HS20}. 
\end{definition}
We see that if $\mathcal V$ is a braided monoidal abelian category of $k$-vector spaces, then each object can be given a braided vector space structure. The main example relevant to Hurwitz spaces is the category of Yetter-Drinfeld modules $\mathcal {YD}_G$, as we will see at the end of Section \ref{sec: KS equivalence}.
\begin{example}\label{eg: YD G-module}
Let $G$ be a finite group. A Yetter-Drinfeld $G$-module is a finite-dimensional $G$-graded $k$-vector space $V=\bigoplus_{g\in G} V_g$ such that $V_g\cdot h\subseteq V_{h^{-1}gh}$. These are the objects of the category $\mathcal {YD}_G$ with $\otimes$ being the usual graded tensor product and the braiding given by $R(v\otimes w)=w\otimes (v\cdot h)$ for $v\in V_g$ and $w\in V_h$. Examples of Yetter-Drinfeld $G$-module are the braided vector spaces of rack type in Example \ref{eg: braided vector spaces}(c) where the rack $S$ which as a set is a union of conjugacy classes in the group $G$ with $g^h=h^{-1}gh$.
\end{example}
\subsubsection{Tensor algebras, quantum shuffle algebras and Nichols algebras} Let $V\in \mathcal V$ be an object in a braided monoidal abelian category over $k$, with the braiding $R=R_{V,V}\colon V\otimes V\xrightarrow\sim V\otimes V$. For example, $(V,R)$ can be a braided vector space. 

For any permutation $\sigma \in S_n$, there is a Matsumoto lift $\tilde \sigma\in B_n$ given by expressing $\sigma$ as the minimum length (reduced) word in transpositions $s_i=(i,i+1)$ and replacing these with the braid $\sigma_i$. This is well-defined because any two reduced words are related by $s_is_{i+1}s_i=s_{i+1}s_is_{i+1}$ or $s_is_j=s_js_i$ for $|i-j|>1$ which are exactly the braid relations. This allows us to define the braiding $R_\sigma\colon V^{\otimes n}\xrightarrow \sim V^{\otimes n}$ given by applying $R$ to the Matsumoto lift $\tilde \sigma$.
 
Define the set of $(p,q)$-shuffles to be
$$\text{Sh}(p,q)\coloneqq\{\sigma\in S_{p+q} \colon \sigma(1)<\cdots <\sigma(p) \text{ and }\sigma(p+1)<\cdots<\sigma(p+q)\}$$
which are the permutations that preserve the order of the first $p$ elements and last $q$ elements. The inverses of the $(p,q)$-shuffles are exactly the $(p,q)$-unshuffles, which we denote by $\text{USh}(p,q)$.

The tensor algebra $T(V)=\bigoplus_{n=0}^\infty V^{\otimes n}$ has free multiplication $\mu$, i.e. it has components $\mu_{p,q}\colon V^{\otimes p}\otimes V^{\otimes q}\xrightarrow\sim V^{\otimes p+q}$ given by the identity map. The comultiplication $\Delta$ is the unique algebra map that extends $\Delta|_{T^1(V)}=1\otimes \id + \id \otimes 1$, concretely, it is given on components by
$$\Delta_{p,q} (v)= \sum_{\sigma\in \text{USh}(p,q)} R_{\sigma}v$$
where we interpret the RHS as an element of $V^{\otimes p}\otimes V^{\otimes q}$ via the identity map $V^{\otimes p+q}\xrightarrow\sim V^{\otimes p}\otimes V^{\otimes q}$. We call this the unshuffle coproduct and sometimes write $\Delta=\Delta_\star$ for clarity.

Dually, the quantum shuffle algebra (or cotensor algebra) $\mathfrak A(V)=\bigoplus_{n=0}^\infty V^{\otimes n}$ has free comultiplication with components $\Delta_{p,q}\colon V^{\otimes p+q}\xrightarrow\sim V^{\otimes p}\otimes V^{\otimes q}$ given by the identity map, and multiplication $\mu=\mu_\star$ has components $\mu_{p,q}(v,w)$ given by the shuffle product
$$v \star_{p,q}w = \sum_{\sigma\in \text{Sh}(p,q)} R_\sigma (v\otimes w).$$
Suppose that $V\in \mathcal V$ is dualizable and has dual $V^*$. For example, all braided vector spaces $(V,R)$ have duals $(V^*,R^*)$. Then, the tensor algebra $T(V)$ is dual to the quantum shuffle algebra $\mathcal A(V^*)$.

Lastly, the Nichols algebra $\mathfrak  B(V)$ is defined to be the image of the morphism of bialgebras $T(V)\rightarrow \mathfrak A(V)$ which is uniquely determined by sending $T^1(V)=V$ into $\mathfrak A^1(V)=V$ via the identity map. In other words, it is the subalgebra of $\mathfrak A(V)$ generated by degree $1$ elements under the shuffle product. 

In \cite{KS20}, the tensor algebra, quantum shuffle algebra and Nichols algebra are denoted as $T_!(V)$, $T_*(V)$ and $T_{!*}(V)$ respectively, as they correspond to $j_!\mathcal L$, $j_*\mathcal L$ and $j_{!*}\mathcal L$ as in Proposition \ref{prop: equivalence properties}(b).
\subsubsection{Primitive bialgebras}\label{sec: primitive bialgebra} Let $\mathcal V$ be a braided monoidal abelian category over $k$ with $V\in \mathcal V$. The tensor algebra $T(V)$, quantum shuffle algebra $\mathfrak A(V)$ and Nichols algebra $\mathfrak B(V)$ are examples of primitive bialgebras in $\mathcal V$, which we now define.
\begin{definition}
A \textit{primitive bialgebra} in $\mathcal V$ is a graded bialgebra $A=\bigoplus_{n= 0}^\infty A_n$ which is connected and coconnected. The category of primitive bialgebras $\text{PB}(\mathcal V)$ has these objects with morphisms respecting the bialgebra structure.
\end{definition}
In more detail, a primitive bialgebra $A$ has the structure of a connected graded algebra with unit $\eta\colon A_0=k\hookrightarrow A$ and multiplication $\mu\colon A\otimes A\rightarrow A$ which respects the grading, as well as the structure of a coconnected graded coalgebra with counit $\epsilon \colon A\twoheadrightarrow k=A_0$ and comultiplication $\Delta \colon A\rightarrow A\otimes A$ satisfying the compatibility condition that $\Delta$ is a morphism of algebras. Here, the algebra structure of $A\otimes A$ has multiplication given by $(\mu\otimes \mu)\circ (\id\otimes R_{A,A}\otimes \id)\colon(A\otimes A)\otimes (A\otimes A)\rightarrow A\otimes A$. For non-negative integers $p,q$, let $\mu_{p,q}\colon A_p\otimes A_q\rightarrow A_{p+q}$ and $\Delta_{p,q}\colon A_{p+q}\rightarrow A_p\otimes A_q$ be the components of multiplication and comultiplication in $A$. One can also show that primitive bialgebras are graded Hopf algebras \cite[Proposition 2.4.11]{KS20} by defining the antipode in terms of multiplication and comultiplication. 

To any primitive bialgebra we can associate the $n$-th bar-complex and $n$-th cobar-complex given respectively by
$$B_n(A)=\left\{A_1^{\otimes n}\rightarrow \cdots \rightarrow \bigoplus_{\substack{p+q+r=n\\
0<p,q,r<n}} A_p\otimes A_q \otimes A_r\rightarrow \bigoplus_{\substack{p+q=n\\
0<p,q<n}} A_p\otimes A_q \rightarrow A_n\right\},$$
$$B_n^\dagger(A)=\left\{A_n\rightarrow \bigoplus_{\substack{p+q=n\\
0<p,q<n}} A_p\otimes A_q \rightarrow \bigoplus_{\substack{p+q+r=n\\
0<p,q,r<n}} A_p\otimes A_q \otimes A_r\rightarrow \cdots\rightarrow A_1^{\otimes n}\right\},$$
where the grading of the complexes are normalized such that $A_n$ is in degree $-1$ and degree $1$ respectively. The $n$-th bar-complex (resp. cobar-complex) come froms the $n$-th bar-cube (resp. cobar-cube) which is a hypercube diagram consisting of $2^{n-1}$ vertices with entries $A_{\lambda_1}\otimes \cdots \otimes A_{\lambda_m}$ where $\lambda_1+\cdots +\lambda_m=n$ and $\lambda_i>0$, and has edges given by multiplication (resp. comultiplicaiton). We collapse the maps in the bar-cube and cobar-cube while adding a Koszul sign twist to get the bar-complex and and cobar-complex respectively.
\subsection{Perverse sheaves}
We first give general definitions for perverse sheaves, the define the notion of factorizable perverse sheaves over $\Sym(\C)$ and explain Theorem \ref{thm: KS20} in more detail.  
\subsubsection{Definitions}\label{sec: perverse definitions} Perverse sheaves can be defined in various different but related settings, for example:
\begin{enumerate}
\item Analytically. When $X$ is a complex (or real analytic) manifold with coefficients in an abelian category $\mathcal V$ (e.g. $k$-vector spaces), there is $D^b_c(X,\mathcal V)$ and $\Perv(X,\mathcal V)$ as in \cite{Kashiwara_Schapira, KS20}.
\item Algebraically. When $X$ is a scheme over a field $K$ with $\overline\Q_\ell$-coefficients (also $\Z_\ell,\Q_\ell$, etc.) for $\ell\neq \text{char} (K)$, one can construct the derived category $D^b_c(X_{\text{ét}},\overline\Q_\ell)$ and perverse sheaves $\Perv(X_{\text{ét}},\overline\Q_\ell)$ as in \cite{BBD, Kiehl_Weissauer_2010}.
\item Relative version of the algebraic case. When $X\rightarrow S$ is a finitely presented morphism of schemes over $\Z[\frac 1 \ell]$ with $\overline\Q_\ell$-coefficients (also $\Z_\ell,\Q_\ell$, etc.), there is $D^b_c(X,\overline\Q_\ell)$ and $\Perv(X,\overline\Q_\ell)$ as in \cite{Hansen_Scholze_2023}. 
\end{enumerate}

We remark that when $X$ is a finite-type complex algebraic variety, there is an analytification functor $D^b_c(X,\overline\Q_\ell)\rightarrow D^b_c(X^{an},\overline\Q_\ell)$ from the algebraic setting (2) to the analytic setting (1) that is essentially surjective, likewise for $\Z_l,\Q_\ell$ coefficients \cite[Section 6]{BBD}. 

Our paper will make use of all three constructions in some way. This is because we want results on weights of schemes over $\F_p$ in the context of (2) while making use of the machinery developed in \cite{KS20} in the context of (1). We will bridge between these two situations in Section \ref{sec: construction weight filtration} with the help of the relative algebraic setting (3). However, apart from this and Section \ref{sec: weights over F_p}, the rest of our paper will only be concerned with perverse sheaves in the analytic setting. Thus, we introduce perverse sheaves in the analytic setting following \cite[Section 1.1,1.4]{KS20}, and leave the algebraic case to analogy, referring the reader to the references above. 

Let $X$ be a connected complex manifold, with $\dim_\C(X)=n$, and let $S=\{X_{\alpha}\}$ be a complex analytic Whitney stratification of $X$. In this paper we will only use the diagonal stratification of $\Sym^n \C$ introduced previously. Let $\mathcal V$ be an abelian category, for example the category of $k$-vector spaces, or a braided monoidal abelian category over $k$. Let $\Shv(X,\mathcal V)$ be the abelian category of sheaves on $X$ with values in $\mathcal V$. We define sheaves in the usual way as a contravariant functor $\mathcal F$ from the category of open sets in $X$ to $\mathcal V$ satisfying the sheaf axiom where $0\rightarrow \mathcal F(U)\rightarrow \prod_i \mathcal F(U_i)\rightarrow \prod_{i,j} \mathcal F(U_i\cap U_j)$ is exact.

Let $\Shv(X,S,\mathcal V)$ be the abelian subcategory of $\Shv(X,\mathcal V)$ consisting of sheaves $\mathcal F$ which are constructible with respect to $S$, which means that $\mathcal F$ is locally constant on each stratum $X_{\alpha}$. Let $C^b(X,S,\mathcal V)$ be the category of bounded chain complexes of $\Shv(X,S,\mathcal V)$, and from this, we can construct the bounded derived category $D^b(X,S,\mathcal V)$ by inverting all quasi-isomorphisms. Note that for all $K\in D^b(X,S,\mathcal V)$, the cohomology sheaves $\mathcal H^i(K)$ are in $\Shv(X,S,\mathcal V)$. We can define the six derived functors $f_*,f_!,f^*,f^!,\otimes^\mathbf L,\D$ in the derived category.

Let $\Perv(X,S,\mathcal V)\subseteq D^b(X,S,\mathcal V)$ be the abelian category which is the heart of the perverse t-structure, see \cite{Kashiwara_Schapira,Kiehl_Weissauer_2010} for more details. Explicitly, a complex $K\in D^b(X,S,\mathcal V)$ is perverse if for every stratum $i_{\alpha}\colon X_{\alpha}\hookrightarrow X$ the cohomology sheaves satisfy both $\mathcal H^i(i_{\alpha}^*K)=0$ for $i>-\dim_\C(X_{\alpha})$ and $\mathcal H^i(i_{\alpha}^!K)=0$ for $i<-\dim_\C(X_{\alpha})$. One common way to define perverse sheaves in the literature is to use the dual instead of the costalk. We avoid this because this is not the correct notion when $\otimes$ is not bi-exact in $\mathcal V$. 

We denote $D^b_c(X,\mathcal V)=\bigcup_S D^b(X,S,\mathcal V)$ to be the bounded category of constructible derived sheaves and $\Perv(X,\mathcal V)=\bigcup_S \Perv(X,S,\mathcal V)$ to be the subcategory of perverse sheaves, where here we take the union over all possible stratifications.

For a complex $K\in D^b(X,S,\mathcal V)$, we can take its perverse cohomology ${}^p \mathcal H^i(K)\in \Perv(X,S,\mathcal V)$. For a locally closed embedding $j\colon U\hookrightarrow X$, the middle extension for $K\in \Perv(U,\mathcal V)$ is given by $j_{!*}K=\text{Im}({}^p\mathcal H^0(j_!K)\rightarrow {}^p\mathcal H^0(j_*K))\in \Perv(X,\mathcal V)$. This sends simple perverse sheaves to simple perverse sheaves, and conversely any simple perverse sheaf in $\Perv(X,S,\mathcal V)$ is necessarily of the form $j_{!*}\mathcal L$ for some $j\colon X_{\alpha}\hookrightarrow X$ and local system $\mathcal L$ on the stratum $X_{\alpha}$.
\subsubsection{Factorizable perverse sheaves}\label{sec: factorizable perverse sheaves}
Now let $\mathcal V$ be a braided monoidal abelian category over $k$, and we further impose that $\otimes$ is biexact. The braiding data of $\mathcal V$ extends to bounded chain complexes $C^b(\mathcal V)$ by the Koszul sign rule where $R_{V[m],W[n]}=(-1)^{mn}R_{V,W}[m+n]$. Since $\otimes$ is biexact, the braided structure descends to $D^b(\mathcal V)$. This induces a braided monoidal structure on perverse sheaves via the external tensor product as follows. Let $\mathcal F\in \Perv(X,\mathcal V)$ and $\mathcal G\in \Perv(Y,\mathcal V)$ be perverse sheaves with coefficients in $\mathcal V$, then we have the external braiding isomorphism $$R_{\mathcal F,\mathcal G}\colon \mathcal F\boxtimes \mathcal G\rightarrow \pi^*(\mathcal G\boxtimes \mathcal F)$$ where $\pi\colon X\times Y\xrightarrow\sim Y\times X$. 

We can interpret this using the operadic point of view, see \cite[Section 3.2A]{KS20} for more details. Let $E_2$ be the operad of little $2$-disks in $\C$, so an element $(U_1,\ldots, U_m)\in E_2(m)$ is a tuple of disjoint round open disks in the unit open disk $D$. We can interpret the external braiding isomorphism as a datum of a continuous family of external tensor products $\boxtimes_{(U_1,\ldots, U_m)}\mathcal F_i$ indexed by elements in $E_2(m)$ for $m\ge 0$. For example, roughly speaking, swapping $U_i$ and $U_{i+1}$ via a half Dehn twist is the same as applying $R_{\mathcal F_i,\mathcal F_{i+1}}$ to the respective factors $\mathcal F_i$, $\mathcal F_{i+1}$.

This is important for us to define factorizable perverse sheaves. Recall that we have the diagonal stratification $S_n=\{S_\lambda\}_\lambda$ on $\Sym^n(\C)$. Combining this for all $n\ge 0$, we get the stratification $S$ on $\Sym(\C)$. Furthermore, for any open $U\subseteq \C$, the stratifications $S$, $S_n$ restrict to stratifications $S_U$ and $S_{n,U}$ on $\Sym (\C)$ and $\Sym^n(\C)$ respectively. Define $\Perv(\Sym (U), S_U,\mathcal V)$ to be the product of the categories $\Perv(\Sym^n(U), S_{n,U},\mathcal V)$, so an object $\mathcal F\in \Perv(\Sym (U), S_U,\mathcal V)$ is a sequence $(\mathcal F_n)_{n\ge 0}$ of perverse sheaves $\mathcal F_n\in \Perv(\Sym^n (U), S_{n,U},\mathcal V)$. Let 
$a\colon \prod_{i=1}^m \Sym(U_i) \xrightarrow{\ \sim\ }\Sym \left(\bigsqcup_{i=1}^m U_i\right)$ be the isomorphism given by combining the tuples. 
\begin{definition}\label{def: FPS}
A \textit{factorizable perverse sheaf} is an object $\mathcal F=(\mathcal F_n)_n\in \Perv(\Sym (\C), S,\mathcal V)$ with the additional data of an isomorphism
$$\mu_{U_1,\ldots, U_m} \colon \boxtimes_{(U_1,\ldots, U_m)}\mathcal F|_{\Sym (U_i)}\xrightarrow{\ \sim \ }a^*(\mathcal F|_{\Sym(\sqcup U_i)})$$
for each $U_1,\ldots, U_m$ that are compatible with respect to operadic compositions. The category of factorizable perverse sheaves $\text{FPS}(\mathcal V)$ has these as objects with morphisms that respect the data of the isomorphisms above.
\end{definition}
\begin{remark}
In Section \ref{sec: factorizable perverse sheaves over W}, we will extend this definition to the case when $\otimes$ is not biexact by replacing $\boxtimes$ with its derived version $\boxtimes^\mathbf L$.
\end{remark}
We give the main examples of factorizable perverse sheaf that we will use.
\begin{example}\label{eg: compatible local systems}
For a braided vector space $V\in \mathcal V$, we define the \textit{compatible local systems} $\mathcal L(V[1])=(\mathcal L_n(V[1]))_n$ as follows. Let $\mathcal L_n(V[1])$ be the local system on $\Sym^n_{\neq}(\C)$ which is given by the $B_n$-representation $(V[1])^{\otimes n}$. Due to the Koszul sign twist, this is equal to $(V^{\otimes n}\otimes \text{sgn})[n]$. It is clear that $\mathcal L(V[1])$ is a factorizable perverse sheaf. 

Furthermore, let $j\colon \Sym_{\neq}(\C)\hookrightarrow \Sym(\C)$ and $j_n\colon \Sym^n_{\neq}(\C)\hookrightarrow \Sym^n(\C)$. Then, it can be shown that $j_{*}\mathcal L(V[1])$, $j_{!}\mathcal L(V[1])$ and $j_{!*}\mathcal L(V[1])$ are all factorizable perverse sheaves.
\end{example}
\subsubsection{Kapranov-Schechtman equivalence}\label{sec: KS equivalence} We explain the equivalence of Theorem \ref{thm: KS20} in more detail. Let $\mathcal V$ be a braided monoidal abelian category over $k$ with $\otimes$ biexact. Then \cite{KS20} constructs the functors
\[\begin{tikzcd}
	{\text{PB}(\mathcal V)} && {\text{FPS}(\mathcal V)}
	\arrow["L", shift left, from=1-1, to=1-3]
	\arrow["\Phi", shift left, from=1-3, to=1-1]
\end{tikzcd}\]
which give an equivalence between the two categories.

We give a sketch of the construction and refer the reader to \cite[Section 4]{KS20} for more details. Given a primitive bialgebra $A$, the factorizable perverse sheaf $L(A)=(\mathcal E^\bullet_n(A))_n$ consists of the Cousin complexes $\mathcal E_n^\bullet(A)$ which are constructed as follows. For a partition $\alpha$ of $n$, consider the imaginary stratum $X^{\mathfrak I}_\alpha$ given by the preimage of the stratum $\alpha$ on $\Sym^n(\R)$ along the imaginary part map $\text{Im}\colon \Sym^n(\C)\rightarrow \Sym^n(\R)$. Consider the real-parts projection $\rho_\alpha\colon X^{\mathfrak I}_\alpha\rightarrow \prod \Sym^{\alpha_v}(\R)$, and let $\tilde{\mathcal E}^\alpha(A)$ be the pullback along $\rho_\alpha$ of the product of constructible sheaves on $\Sym^{\alpha_v}(\R)$ each given by the $\alpha_v$-th cobar-cube. Then, define the Cousin sheaves $\mathcal E^\alpha(A)$ as the pushforward of $\tilde{\mathcal E}^\alpha(A)$ to $\Sym^n(\C)$, and the $n$-th Cousin complex is given by
$$\mathcal E^\bullet_n(A)=\left\{\mathcal E^{(1^n)}(A)\rightarrow \bigoplus_{l(\alpha)=n-1}\mathcal E^\alpha(A)\rightarrow \cdots \rightarrow \mathcal E^{(n)}(A)\right\}$$
with natural maps between Cousin sheaves along with a Koszul sign twist, and normalized such that $\mathcal E^{(n)}(A)$ is in degree $-1$. On the other hand, given $\mathcal F=(\mathcal F_n)$, we construct $\Phi(\mathcal F)=\oplus_{n\ge 0} \Phi_{\tot}(\mathcal F_n)$ to be the total vanishing cycles at the origin $0^n\in \Sym^n(\C)$. More precisely, $\Phi_{\tot}(\mathcal F_n)$ are the vanishing cycles at the origin in $\mathcal F_n|_{\Sym^n_0(\C)}$ with respect to the coefficient $a_2=\sum_{i\neq j}x_ix_j$ where $\Sym^n_0(\C)\subseteq \Sym^n(\C)$ is the subset where $a_1=\sum_i x_i=0$. The comultiplication is given by generalization maps, and the multiplication is obtained by reconstructing the Cousin complex and looking at the differentials.

Because the construction is rather lengthy, we often rely on the properties of this equivalence rather than go through the construction itself. We list the ones we will use following \cite[Theorem 3.3.1, 3.3.3]{KS20}. Recall the definition of compatible local systems $\mathcal L(V[1])$ and let $j\colon \Sym_{\neq}(\C)\hookrightarrow \Sym(\C)$.
\begin{proposition}\label{prop: equivalence properties}
The equivalence satisfies the following additional properties.
\begin{enumerate}[(\alph*)]
\item On the generic stratum, $L(A)|_{\Sym_{\neq}(\C)}\cong \mathcal L(A_1[1])$. 
\item We have $L(T(V))\cong j_!\mathcal L(V[1])$, $L(\mathfrak A(V))\cong j_*\mathcal L(V[1])$ and $L(\mathfrak B(V))\cong j_{!*}\mathcal L(V[1])$.
\item The total vanishing cycles $\Phi_{\tot}(L_n(A))\cong A_n$.
\item The stalk of $L_n(A)$ at the origin is the $n$-th bar-complex $B_n(A)$.
\item Suppose $A$ is dualizable with dual $A^*$, then $L_n(A^*)\cong \D L_n(A)$.
\end{enumerate} 
\end{proposition}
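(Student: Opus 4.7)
The plan is to verify each of the five properties directly from the construction of the Cousin complex $L(A)=(\mathcal{E}^{\bullet}_n(A))_n$ and the total vanishing cycles functor $\Phi_{\tot}$ recalled above. Parts (a), (c), (d) amount to essentially bookkeeping from the definition; parts (b) and (e) require a bit more structural input.

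For (a), on $\Sym^n_{\neq}(\C)$ the only imaginary stratum $X^{\mathfrak{I}}_\alpha$ meeting the generic open locus is the one for $\alpha=(1^n)$, since coarser partitions force coordinate coincidences. For this $\alpha$ each factor of the cobar cube lives on $\Sym^1(\R)$ and collapses to the single term $A_1$, so pushing forward and applying the Koszul sign twist returns the local system whose monodromy is precisely the braided $B_n$-representation $(A_1[1])^{\otimes n}$, i.e.\ $\mathcal{L}_n(A_1[1])$. For (d), the stalk of $\mathcal{E}^{\bullet}_n(A)$ at $0^n \in \Sym^n(\C)$ picks up every partition, and the contributions assemble into a cube whose vertices are the tensor products $A_{\lambda_1}\otimes\cdots\otimes A_{\lambda_m}$ over compositions $\lambda$ of $n$, with edges given by multiplication; collapsing with Koszul signs is precisely the definition of $B_n(A)$. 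Part (c) then follows by computing $\Phi_{\tot}$ on $L_n(A)$: restricting to $\Sym^n_0(\C)$ and taking vanishing cycles with respect to $a_2$ kills all strata except the one for the partition $(n)$, which contributes $A_n$.

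For (b), the cleanest route is through the inverse $\Phi$. Granted (c), it suffices to verify $\Phi(j_*\mathcal{L}(V[1]))_n\cong \mathfrak{A}_n(V)$ as primitive bialgebras, and dually for $T(V)$. The comultiplication on $\Phi_{\tot}$ is produced by generalization maps along the imaginary strata of $\Sym^n(\C)$; when applied to $j_*$ of $\mathcal{L}(V[1])$, this reduces to the unshuffle coproduct on $V^{\otimes n}$. Reconstructing the Cousin complex to extract the multiplication then picks up the shuffle product $\mu_\star$, identifying the result with $\mathfrak{A}(V)$. The $T(V)$ case follows by Verdier duality (using (e)), and then $\mathfrak{B}(V)$ corresponds to $j_{!*}\mathcal{L}(V[1])$ because the middle extension is by construction the image of $j_!\to j_*$, matching the algebraic definition of $\mathfrak{B}(V)$ as the image of $T(V)\to \mathfrak{A}(V)$.

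For (e), once one knows that Verdier duality swaps $j_!\leftrightarrow j_*$ on the generic stratum and interchanges $*$-stalks with $*$-costalks, the Cousin construction is manifestly symmetric under exchanging $A$ with $A^*$ and multiplication with comultiplication, so $\D L_n(A)\cong L_n(A^*)$ termwise and as a complex. The main obstacle throughout is the operadic bookkeeping required in (b): one must verify that the shuffle multiplication on $\mathfrak{A}(V)$ matches the factorization isomorphisms $\mu_{U_1,\ldots,U_m}$ of Definition \ref{def: FPS} compatibly with $E_2$-composition, which amounts to showing that a half Dehn twist exchanging two little disks $U_i, U_{i+1}$ implements the braiding $R$ on the corresponding $V$-factors. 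This is the technical core of \cite[Section 4]{KS20}, and I would invoke that reference to fill in the detailed verification rather than reprove it from scratch.
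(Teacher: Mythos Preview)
The paper does not actually prove this proposition: it is stated as a list of properties taken directly from \cite[Theorem 3.3.1, 3.3.3]{KS20}, introduced by the sentence ``We list the ones we will use following \cite[Theorem 3.3.1, 3.3.3]{KS20}.'' So there is nothing to compare your argument against; the paper treats these as imported facts.

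Your sketch is a reasonable gloss on how the arguments in \cite{KS20} go, and you are right that the technical core is the operadic compatibility check in (b), which you correctly defer to \cite[Section 4]{KS20}. One small correction: in your argument for (d), the edges of the cube at the origin stalk should be given by multiplication for the \emph{bar}-complex (as you wrote), but you should be careful that the Cousin complex is built from cobar-cubes on the real-parts factors, and it is the specific way the origin sits in all imaginary strata simultaneously that converts this into the bar-complex; your one-line summary elides this. Similarly, (c) is more delicate than ``kills all strata except $(n)$'': the vanishing cycles computation genuinely uses the transversality of $a_2$ to the diagonal stratification. But since both you and the paper ultimately rely on \cite{KS20} for these details, this is not a gap so much as an acknowledgment that the full proof lives elsewhere.
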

We discuss how (d) implies the statement on cohomology in Theorem \ref{thm: KS20}, following the proof of \cite[Corollary 3.3.4]{KS20} For a primitive bialgebra $A$, the cohomology of the bar-complex $H^jB_n(A)$ is isomorphic to $\Tor^A_{-j,n}(k,k)$. On the other hand, for any $\mathcal F_n\in \Perv(\Sym^n(\C),S_n,\mathcal V)$, the stalk $(\mathcal F_n)_0$ at the origin is quasi-isomorphic to $R\Gamma \mathcal F_n$ by quasi-homogenity of the diagonal stratification (in other words, by the contracting $\C^*$ action), so (d) gives an isomorphism 
$$R\Gamma^{j}L_n(A)\cong \Tor^A_{-j,n}(k,k).$$
In the special case when $A=\mathfrak A(V)$ so that $L_n(A)=j_{n*}\mathcal L_n(V[1])$ as in (b), we see that LHS is $$R\Gamma^j(\Sym^n(\C),j_{n*}\mathcal L_n(V[1]))=R\Gamma^j(\Sym^n_{\neq}(\C),\mathcal L_n(V[1]))=H^j(B_n,V^{\otimes n}\otimes \text{sgn})[n],$$
so we recover the dual of \cite[Theorem 1.3]{ETW17} twisted by a sign:
$$H^j(B_n,V^{\otimes n}\otimes \text{sgn})\cong \Tor^{\mathfrak A(V)}_{n-j,n}(k,k).$$

Finally, recall that the topological Hurwitz spaces are the finite étale covers $\pi_n \colon \Hur^{G,c}_n \rightarrow \Sym^n_{\neq}(\C)$ given by the $B_n$-representation $c^n$. Recall from Example \ref{eg: braided vector spaces} and \ref{eg: YD G-module} that $c$ is a rack with $g^h=h^{-1}gh$ and there is an associated braided vector space $V_{\epsilon}=V(c,-1)\in \mathcal YD_G$ where we take the cocycle $x_{ab}\equiv -1$. The $B_n$-representation $c^n$ is the same as the $B_n$ representation on $V_{\epsilon}^{\otimes n}\otimes \text{sgn}$ as the sign representation cancels with the negative cocyle. Hence, the cohomology of the Hurwitz space is given by $$H^j(\Hur^{G,c}_n,k)\cong H^j(B_n, c^n)\cong H^j(B_n, V_\epsilon^{\otimes n}\otimes \text{sgn})\cong \Tor^{\mathfrak A(V_{\epsilon})}_{n-j,n}(k,k).$$
\subsection{Weight filtrations}\label{sec: weight filtration}
We introduce the weight filtration in two different settings, first over $\F_p$ using eigenvalues of Frobenius, then over $\C$ using the theory of mixed Hodge modules. In Section \ref{sec: construction weight filtration}, we will prove a comparison theorem that links the two in the case of Hurwitz spaces.
\subsubsection{Weight filtration over $\F_p$}\label{sec: weights over F_p} 
We now briefly explain Deligne's theory of weights in the algebraic setting over $\F_p$ following \cite{BBD,Kiehl_Weissauer_2010}. Let $X$ be a scheme over $\F_p$, let $\mathcal F$ be a constructible $\overline\Q_\ell$-sheaf with $\ell \neq p$ and fix an isomorphism $\iota\colon \overline\Q_\ell\xrightarrow\sim \C$. For a closed point $x\in X$ with residue field $k_x$ and geometric point $\overline x$, the stalk $\mathcal F_{\overline x}$ is a finite-dimensional $\overline\Q_\ell$ vector space by constructibility and has an action of geometric Frobenius $\Frob_x$. We say that $\mathcal F$ is \textit{pure of weight $w$} if for all such closed points $x\in X$, the eigenvalues $\alpha$ of $\Frob_x$ on $\mathcal F_{\overline x}$ satisfies $|\iota(\alpha)|=(\#k_x)^{w/2}$. We say that $\mathcal F$ is \textit{mixed} if there exists a finite filtration of $\mathcal F$ where each graded piece is pure, and we let $w(\mathcal F)$ be the maximum weight of $\mathcal F$.

Recall that in analogy to Section \ref{sec: perverse definitions} we can define the bounded derived category of constructible sheaves $D^b_c(X,\overline\Q_\ell)$ and its subcategory of perverse sheaves $\Perv(X,\overline\Q_\ell)$ in the algebraic setting. We say that a complex $K\in D^b_c(X,\overline\Q_\ell)$ is \textit{pointwise pure of weight $w$} if each cohomology sheaf $\mathcal H^i(K)$ is pure of weight $w+i$, and here we note the shift in weight due to cohomological degree. Unfortunately, the definition of pointwise pure does not behave well with respect to derived functors, so we need to define a different notion of purity.

We say that $K$ is \textit{mixed} if each cohomology sheaf $\mathcal H^i(K)$ is mixed, and let $w(K)$ be the maximum of $w(\mathcal H^i(K))-i$ over all $i$. Let the two subcategories $D^b_{\le w}(X,\overline\Q_\ell)$ and $D^b_{\ge w}(X,\overline\Q_\ell)$ consist of mixed complexes which satisfy $w(K)\le w$ and $w(\D K)\le -w$ respectively. It can be shown that $w(\D K)\le -w$ implies $w(K)\ge w$, but the converse is not true. We call $K$ \textit{pure of weight $w$} if it is in $D^b_{\le w}(X,\overline\Q_\ell)\cap D^b_{\ge w}(X,\overline\Q_\ell)$.  

The notions of pointwise pure and pure do not coincide in general, but they do agree in important special cases. For example, when $X$ is smooth and $\mathcal H^iK$ are local systems, we have $\mathcal H^i \D K\cong (\mathcal H^{2d-i}K)^\vee(d)$ where $d=\dim X$, and we can directly compare the Frobenius eigenvalues and show that the two notions agree. Most importantly, this means that when $X$ is a point the two notions agree for any complex $K$. 

The six derived functors $f_*,f_!,f^*,f^!,\otimes^\mathbf L, \D$ all preserve mixed complexes. It is clear from the definition that $f^*$ preserves $D^b_{\le w}(X,\overline\Q_\ell)$, and it follows from the Weil conjectures that $f_!$ also preserves $D^b_{\le w}(X,\overline\Q_\ell)$, so by duality both $f_*$ and $f^!$ preserve $D^b_{\ge w}(X,\overline\Q_\ell)$. Duality $\D$ exchanges $D^b_{\le -w}(X,\overline\Q_\ell)$ and $D^b_{\ge w}(X,\overline\Q_\ell)$ by definition. Middle extensions $j_{!*}$ for locally closed embeddings preserves weight (i.e. preserves both $D^b_{\le w}(X,\overline\Q_\ell)$ and $D^b_{\ge w}(X,\overline\Q_\ell)$), so any simple mixed perverse sheaf is necessarily pure as it is a middle extension of a local system. Since we are using the geometric Frobenius, the Tate twist $(d)$ decreases the weight by $2d$.

For any mixed perverse sheaf $\mathcal F\in \Perv(X,\overline\Q_\ell)$, one can prove that there is a canonical unique increasing \textit{weight filtration} 
$$\cdots\subseteq W_{-1}\mathcal F\subseteq W_0\mathcal F\subseteq W_1\mathcal F \subseteq \cdots$$
where the graded components $\Gr^W_w\mathcal F$ are pure perverse sheaves of weight $w$.
\subsubsection{Weight filtration over $\C$}\label{sec: weight filtration over C}
There is an analogous theory of weights over $\C$ developed in \cite{Saito_1988,Saito_1990} in the context of mixed Hodge modules, generalizing the theory of mixed Hodge structures in \cite{Deligne_1971,Deligne_1974}. In this setting, the notion of weight does not arise from a Frobenius action, so the formal structure of the theory differs substantially, though its overall behavior is closely parallel to the finite field case. We summarize it as follows. 

For a separated reduced algebraic variety $X$ over $\C$, there is an abelian category $\MHM(X)$ of mixed Hodge modules with the functor
$$\text{rat}\colon D^b(\MHM(X))\rightarrow D^b_c(X^{an},\Q)$$
which restricsts to an exact and faithful functor $\MHM(X)\rightarrow \Perv(X^{an},\Q)$. Furthermore, the derived functors $f_*,f_!,f^*,f^!,\otimes^\mathbf L,\D$ in $D^b_c(X^{an},\Q)$ lift to $D^b(\MHM(X))$. 

Each mixed Hodge module $M\in \MHM(X)$ is by definition equipped with a finite increasing filtration $W_iM$ called the \textit{weight filtration}, with graded pieces denoted by $\Gr^W_w M$. We define $D^b_{\le w}(\MHM(X))$ (resp. $D^b_{\ge w}(\MHM(X))$) to consist of $M\in D^b(\MHM(X))$ where $\Gr^W_iH^jM=0$ for $i>j+w$ (resp. $i<j+w)$. Say that $M$ is \textit{pure of weight $w$} if it is in the intersection of both, i.e. $\Gr^W_iH^jM=0$ for $i\neq j+w$. Many analogous theorems as in the finite field case hold, for example, we have again that $f_!,f^*$ preserves $D^b_{\le w}(\MHM(X))$, $f_*,f^!$ preserves $D^b_{\ge w}(\MHM(X))$, duality $\D$ exchanges $D^b_{\le -w}(\MHM(X))$ with $D^b_{\ge w}(\MHM(X))$, and we also have the analogous properties for Tate-twists and degree shifts.

The point is that if we want a weight filtration on a perverse sheaf $\mathcal F\in \Perv(X^{an},\Q)$, we first lift it to a mixed Hodge module $M\in \MHM(X)$ which has a weight filtration, and then project it down to a weight filtration of $\mathcal F$. This should work as long $\mathcal F$ ``comes from geometry''. In our case, we want to construct the weight filtration of $j_*\mathcal L=(j_{n*}\mathcal L_n)_n$ for the compatible local systems $\mathcal L=\mathcal L(V[1])$ on $\Sym_{\neq}(\C)$ where $V$ is a braided $k$-vector space, which was described in Example \ref{eg: compatible local systems}. The first step is to lift $\mathcal L_n$ to a mixed Hodge module $M$ of type $(0,0)$ with trivial Hodge structure. However, $\mathcal L_n$ may not be defined over $\Q$, and even then not every $\Q$-local system can be lifted to a mixed Hodge module. 

Hence, we impose the condition that $\mathcal L_n$ has \textit{finite monodromy} for each $n$. We consider one such $\mathcal L_n$, because of finite monodromy it is defined over a number field $K/\Q$ and we can write $\mathcal L_n=\mathcal L_K\otimes_K k$. Here, it is worth noting that if $V$ is a braided vector space defined over $K$ then all $\mathcal L_n$ are defined over the same $K$. Define a $K$-mixed Hodge module to be a $K$-module in the category of mixed Hodge modules, i.e. a usual mixed Hodge module $M$ with a $\Q$-algebra homomorphism $K\rightarrow \End_{\text{MHM}}(M)$. The functor $\text{rat}$ extends naturally to $\text{rat}_K$ which sends a $K$-mixed Hodge module to a $K$-module of $\Perv(X,\Q)$ which is simply an object of $\Perv(X,K)$. 

We show that the irreducible $K$-local system $\mathcal L_K$ lifts uniquely to a $K$-mixed Hodge module of type $(0,0)$. Indeed, a mixed Hodge module with underlying perverse sheaf a local system (known as a smooth mixed Hodge module) is just an admissible polarizable variation of Hodge structures in the sense of \cite{Steenbrink_Zucker_1985,Kashiwara_1986}. Furthermore, because we have finite monodromy, the admissibility conditions become trivial, and since we require type $(0,0)$ this is just a local system that can be equipped with a monodromy-invariant symmetric positive-definite bilinear form over $\Q$. Note that we merely require that such a form exists without needing to choose one. With the $K$-module structure, this is a $K$-local system, but since morphisms in the category of mixed Hodge modules need not respect polarizations, we only need the bilinear form to be defined over $\Q$, not $K$. Such a form always exists because the monodromy group $G$ is finite, so averaging any symmetric positive-definite form over $G$ produces a monodromy-invariant one. 

With this, we can define the weight filtration on $j_{n*}\mathcal L_{n}$, by lifting $\mathcal L_n$ to the $K$-mixed Hodge module $M_K$ and taking the pushforward $j_{n*}M_K$ which gives a canonical weight filtration on $\text{rat}(j_{n*}M_K)=j_{n*}\mathcal L_K$. Tensoring this to $k$ gives us the weight filtration on $j_{n*}\mathcal L_{n*}$. 

We discuss a different, geometric way to get the mixed Hodge module structure on $\mathcal L_n$ (and hence on $j_{n*}\mathcal L_n$), which will be useful for Hurwitz spaces later in Section \ref{sec: construction weight filtration}. The finite monodromy condition tells us that there is a finite étale cover $\pi_n\colon X_n\rightarrow \Sym^n_{\neq}(\C)$ such that $\mathcal L_n$ is a direct summand of $\pi_{n*}\underline{k}_{X_n}[n]$. One can construct a canonical lift $\Q^H\in \MHM(\text{pt})$ of $\Q$ with trivial Hodge structure. We pull this back to $X_n$ to get $\underline{\Q}^H_{X_n}(n/2)[n]\in \MHM(X_n)$ where we shifted degrees so that it remains perverse, and added a Tate-twist to normalize it such that it has weight zero. Pushing forward, this gives us the mixed Hodge module $M=\pi_{n*}\underline{\Q}^H_{X_n}(n/2)[n]$ of trivial type $(0,0)$ with underlying perverse sheaf $\pi_{n*}\underline{\Q}_{X_n}[n]$. In the case where we have $\mathcal L_n=\pi_{n*}\underline{k}_{X_n}[n]$, such as the case for Hurwitz spaces, then $\mathcal L_n$ is already defined over $\Q$ and $M$ is a lift of $\mathcal L_{n,\Q}$. In the general case, we base change $M\otimes_\Q K$ to a suitably large number field, after which we recover the $K$-mixed Hodge module lift of $\mathcal L_n$ as a direct summand.
\section{Equivalence of filtrations}\label{sec: equivalence of filtrations}
The goal of this section is to prove Theorem \ref{thm: equivalence of filtrations} with the strategy outlined in Equation \eqref{eqn: diagram for equivalence proof}. Let $\mathcal V$ be a braided monoidal abelian category of $k$-vector spaces, which means that the underlying objects of $\mathcal V$ are $k$-vector spaces and the tensor product is given by the tensor product $\otimes_k$ as $k$-vector spaces. The main application we care about is when $\mathcal V=\mathcal {YD}_G$ is the category of Yetter-Drinfeld modules.

We will first introduce the corresponding braided monoidal abelian category $\mathcal W$ which are finite $\Z$-indexed diagrams in $\mathcal V$. This is analogous to building graded $k[t]$-modules from $k$-vector spaces. The equivalence (a) follows immediately from the definitions, the equivalence (b) adapts the proof in \cite{KS20} to $\mathcal W$ where $\otimes$ is not bi-exact by essentially replacing the tensor products with derived tensor products. The equivalence in (c) is the most involved, and requires proving more technical lemmas about derived sheaves valued in $\mathcal W$, in particular a careful analysis of the derived exterior tensor product. Lastly, we use our construction to show that the spectral sequences computing the cohomologies of the algebra and perverse sheaves are isomorphic.
\subsection{Braided monoidal abelian category $\mathcal W$ of graded $k[t]$-modules}\label{sec: braided monoidal abelian category}
We define the braided monoidal abelian category $\mathcal W$ associated to $\mathcal V$ as follows. It has objects which are $\Z$-indexed finite diagrams $M=\{\cdots \rightarrow M_0\rightarrow M_1\rightarrow \cdots\}$ with $M_s\in \mathcal V$, where finite means that $M_s=0$ for $s\ll 0$ and $M_s\xrightarrow\sim M_{s+1}$ is an isomorphism for $s\gg 0$. We can view these as graded $k[t]$-modules with $V_s$ in grading $s$ with the degree $1$ element $t$ acting by the arrows in the diagram, and from this perspective the finite diagram condition is equivalent to $M$ being finitely generated (equivalently, finitely presented) as a $k[t]$-module. The morphisms are commuting maps of diagrams which preserve the degree. This is a $k$-linear abelian category even with the finitely generated condition because $k[t]$ is Noetherian. For notational convenience, for any $V\in \mathcal V$ and $s\in \Z$, we let $V\{t\}=\{\cdots\rightarrow 0\rightarrow V\xrightarrow\sim V\xrightarrow \sim\cdots\}\in \mathcal W$ to be the diagram of $V$s in degrees $\ge 0$, and for a diagram $M\in \mathcal W$ we denote $t^sM$ to be $M$ shifted $s$ spots to the right. These are analogous to tensoring with $k[t]$ and multiplying by $t^s$ respectively.

We define the tensor product structure on $\mathcal W$ such that on the underlying graded $k[t]$-structure it is simply given by the tensor product over $k[t]$. Given $M,N\in \mathcal W$, the tensor product $M\otimes_{\mathcal W}N$ has degree $s$ term given by
\begin{equation}\label{eqn: degree n term of tensor product over k[t]}
\left(\bigoplus_{u+v=s}M_u\otimes_k N_v \right) \Big/ (t\otimes_k 1-1\otimes_k t) \left(\bigoplus_{u+v=s-1}M_u\otimes_k N_v \right)
\end{equation}
where here we use $t$ to denote the diagram maps $M_u\rightarrow M_{u+1}$ and $N_v\rightarrow N_{v+1}$. The maps between consecutive degrees are given by $t\otimes_k 1=1\otimes_k t$. As both $M$ and $N$ are finitely generated, we can replace the infinite direct sum by a finite direct sum for $u,v$ over some bounded range, so this is indeed an element in the abelian category $\mathcal V$. The unit in $\mathcal W$ is $k\{t\}$ where $k$ is the unit in $\mathcal V$. The braiding $R_{M,N}\colon M\otimes_{\mathcal W}N\rightarrow N\otimes_{\mathcal W}M$ can be defined on each degree by taking the direct sum of the braidings $R_{M_u,N_v}$ over $u+v=s$ for bounded $u,v$ as above, and then descending to the quotient.
\subsubsection{Factorizable perverse sheaves over $\mathcal W$}\label{sec: factorizable perverse sheaves over W} We can define the category of primitive bialgebras $\text{PB}(\mathcal W)$ as in Section \ref{sec: primitive bialgebra}. However, in Section \ref{sec: factorizable perverse sheaves}, we defined the category of factorizable perverse sheaves assuming that $\otimes$ is biexact, however, this is not the case here as we are looking at $k[t]$-modules. We fix this by looking at the derived tensor product $\otimes^{\mathbf L}$ instead. 

Recall that for $K,L\in D^b(\mathcal W)$, the derived tensor product $K\otimes^\mathbf L L$ is given by replacing either $K$ or $L$ by a free resolution then taking the tensor product. Each object $M\in \mathcal W$ has a free resolution of length at most $1$, we construct this by taking a surjection $$\bigoplus_{s\in I} t^sM_s\{t\}\twoheadrightarrow M$$
for some bounded interval $I$ as $M$ is finitely generated. The kernel of this is also free over $k[t]$ because the submodule of any free module over a PID is free. Because of this, we say that the Tor-amplitude of $k[t]$ is $1$.

We now proceed analogously to Section \ref{sec: factorizable perverse sheaves} or \cite[Section 3.2]{KS20}, with $\otimes$ replaced by $\otimes^\mathbf L$. The braiding on $\mathcal W$ descends to a derived braiding on $D^b(\mathcal W)$ $$R_{K,L}\colon K\otimes^\mathbf L L \xrightarrow{\sim} L \otimes^\mathbf L K.$$
for $K,L\in D^b(\mathcal W)$, where we replace either $K$ or $L$ with a free resolution and then define the braiding on the projective resolutions with a Koszul sign twist. Then, in the same way, we have the external braiding isomorphism $$R_{\mathcal F,\mathcal G}\colon \mathcal F\boxtimes^\mathbf L \mathcal G\rightarrow \pi^*(\mathcal G\boxtimes^\mathbf L \mathcal F)$$ where $\pi\colon X\times Y\xrightarrow\sim Y\times X$, after which we can use the operadic point of view to define $\boxtimes_{(U_1,\ldots, U_m)}^\mathbf L\mathcal F_i$, so we can define a factorizable perverse sheaf of $\mathcal W$ in the same way as Definition \ref{def: FPS} except with $\boxtimes$ replaced by $\boxtimes^\mathbf L$. 
\subsection{Kapranov-Schechtman equivalence over $\mathcal W$} In this subsection, we prove (b) in Equation \eqref{eqn: diagram for equivalence proof} which is an analogue of Theorem \ref{thm: KS20} for $\mathcal W$.
\begin{proposition}\label{prop: b equivalence}
We have the equivalence
\[
\left\{
B \in \text{PB}(\mathcal W)
\text{ free over }k[t]
\right\}
\xleftrightarrow{\ \sim\ }
\left\{\mathcal G\in \text{FPS}(\mathcal W) \text{ with } \Phi_{\tot}(\mathcal G_n)\text{ free over }k[t]
\right\}.
\]
Furthermore, we have the analogue of Proposition \ref{prop: equivalence properties}(d), that is, if $B$ corresponds to $\mathcal G$ then the stalk $(\mathcal G_n)_0$ at the origin is isomorphic to the $n$-th bar-complex $B_n(B)$. 
\end{proposition}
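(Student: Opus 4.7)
The plan is to mirror the construction of the functors $L$ and $\Phi$ from the proof of Theorem \ref{thm: KS20} in \cite{KS20}, with every tensor product replaced by its derived analogue $\otimes^{\mathbf L}$ or $\boxtimes^{\mathbf L}$. The reason this succeeds despite the failure of bi-exactness of $\otimes$ on $\mathcal W$ is that $k[t]$ has Tor-amplitude $1$, so whenever at least one of the factors is free (equivalently flat) over $k[t]$, the derived and classical tensor products coincide and are concentrated in degree $0$. The freeness hypotheses on both sides of the claimed equivalence are exactly what guarantees we are always in this situation.

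First I would define $L \colon \{B \in \text{PB}(\mathcal W) \text{ free over } k[t]\} \to \text{FPS}(\mathcal W)$ by the Cousin construction of \cite[Section 4]{KS20}. For a partition $\alpha$ of $n$, the cobar cube of $B$ has vertices of the form $B_{\alpha_1}\otimes\cdots\otimes B_{\alpha_m}$; by freeness these agree with their derived counterparts and remain free $k[t]$-modules, hence the real-parts projection, pullback along $\rho_\alpha$, and pushforward to $\Sym^n(\C)$ that define $\mathcal E^\alpha(B)$ all behave exactly as in the $\mathcal V$-case. Assembling these into $\mathcal E^\bullet_n(B)$ and equipping the family with factorization data via $\boxtimes^{\mathbf L}$ produces a well-defined object of $\text{FPS}(\mathcal W)$. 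By construction $\Phi_{\tot}(L_n(B)) \cong B_n$, which is free, so $L$ lands in the correct subcategory. Conversely, I would define $\Phi(\mathcal G) = \bigoplus_n \Phi_{\tot}(\mathcal G_n)$, with coalgebra structure coming from generalization maps and algebra structure recovered by reconstructing the Cousin complex from the factorizability data, again following \cite[Section 4]{KS20}. The freeness of each $\Phi_{\tot}(\mathcal G_n)$ over $k[t]$ is what allows the reconstruction to produce classical multiplication and comultiplication maps in $\mathcal W$, rather than morphisms only in $D^b(\mathcal W)$.

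The main obstacle is handling the steps of \cite{KS20} that use bi-exactness of $\otimes$ in an essential way, such as the fact that $\boxtimes$ of two perverse sheaves is perverse, or that Verdier duality exchanges $!$ and $*$ cleanly. The fix is to use $\boxtimes^{\mathbf L}$ in the very definition of factorizability (as set up in Section \ref{sec: factorizable perverse sheaves over W}), and then to observe that on the free subcategory, the derived external tensor products at disjoint configurations are actually concentrated in perverse degree zero: this follows by applying $\Phi_{\tot}$, which detects freeness and commutes with $\boxtimes^{\mathbf L}$ at disjoint supports by the factorization isomorphism. The most delicate verification is that $\Phi$ indeed lands among free primitive bialgebras; once this is established, the reconstruction of the Cousin complex is free stratum-by-stratum, and the resulting multiplication on $\Phi(\mathcal G)$ agrees with the one read off of the differentials in $\mathcal E^\bullet$.

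With the functors defined and confirmed to preserve the freeness conditions, the quasi-inverse identities $L\circ \Phi \simeq \id$ and $\Phi \circ L \simeq \id$ transport from \cite{KS20} verbatim, since every tensor product appearing in those arguments is applied to an object we have arranged to be free. Finally, the stalk identification $(\mathcal G_n)_0 \simeq B_n(B)$ is proved exactly as in \cite[Corollary 3.3.4]{KS20}: the contracting $\C^*$-action on $\Sym^n(\C)$ gives $(L_n(B))_0 \simeq R\Gamma(L_n(B))$, and the restriction of the Cousin complex to the origin collapses to the bar complex $B_n(B)$, with freeness of $B$ ensuring the classical bar complex coincides with its derived version.
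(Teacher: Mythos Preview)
Your proposal is correct and follows essentially the same approach as the paper: run the Kapranov--Schechtman functors $L$ and $\Phi$ from \cite{KS20} with all tensor products derived, and use freeness on each side to guarantee that every $\otimes^{\mathbf L}$ or $\boxtimes^{\mathbf L}$ that appears collapses to its classical counterpart in degree zero, so that the relevant complexes remain perverse and the cited results (\cite[Corollary 4.1.13, Proposition 4.2.17]{KS20}) go through unchanged. One small remark: you describe the verification that $\Phi$ lands among free primitive bialgebras as ``the most delicate,'' but this is immediate from the hypothesis that $\Phi_{\tot}(\mathcal G_n)$ is free; the paper dispatches it in a single clause, and the genuinely delicate content about freeness of $\mathcal G_n$ itself is deferred to Proposition~\ref{prop: c equivalence}.
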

Recall that we cannot directly apply Theorem \ref{thm: KS20} in the case of $\mathcal W$ to prove this because $\otimes$ is not biexact. In fact, the correspondence $\text{PB}(\mathcal W)\xleftrightarrow\sim\text{FPS}(\mathcal W)$ is false without the freeness condition. The main point is that $\boxtimes^\mathbf L$ may not preserve perversity. To see this in more detail, suppose instead that Theorem \ref{thm: KS20} and Proposition \ref{prop: equivalence properties} holds for $\mathcal W$. Then, let us take $B$ to be a primitive bialgebra such that $B_1$ is not free. By Proposition \ref{prop: equivalence properties}(a), we have that $L_1(B)$ is a constant sheaf with value $B_1[1]$. But this means that $L_1(B)|_{U_1}\boxtimes^\mathbf L L_1(B)|_{U_2}$ is a constant sheaf with value $(B_1\otimes^\mathbf L B_1)[2]$, but because $B_1$ is not free this is now supported in both degree $-2$ and $-3$ so it is not perverse. At the same time, the factorizability condition tells us that $L_1(B)|_{U_1}\boxtimes^\mathbf L L_1(B)|_{U_2}\cong a^*(L_2(B)|_{U_1\times U_2})$ which is perverse, giving a contradiction. 

If we had imposed freeness of $B_1$ in the above example, then there would have been no problem as $B_1\otimes^\mathbf L B_1$ is purely in degree zero. Indeed, we will see that freeness of $B$ and correspondingly the freeness of $\Phi_{\tot}(L_n(B))$ for all $n$ is exactly the condition that will make this an equivalence. 
\begin{proof}[Proof of Proposition \ref{prop: b equivalence}]
We follow the construction of the functors in both directions given in \cite{KS20} which we sketched in Section \ref{sec: KS equivalence} except that we change all tensor products $\otimes$ and exterior tensor products $\boxtimes$ to their derived versions $\otimes^\mathbf L$ and $\boxtimes^\mathbf L$. Most things go through as \cite{KS20} was mostly working in the derived category, and we only need to worry about when things are not concentrated in the correct degree because of the Tor-amplitude of $k[t]$, and we need to check that the freeness condition fixes this problem. This essentially boils down to checking that expressions that were meant to be non-derived are still non-derived and that sheaves that were perverse remain perverse. 

First, consider the forward functor $L$. Let $B\in \text{PB}(\mathcal W)$ be free over $k[t]$, which means each algebra grading $B_n$ is free as a $k[t]$-module. When constructing the Cousin complexes, \cite{KS20} has tensor products of the form $\bigotimes_iA_{\lambda_i}$ which are meant to be non-derived. In our context, even though we derive the tensor product as $\bigotimes^\mathbf L_iB_{\lambda_i}$, this is still equal to the non-derived version $\bigotimes_iB_{\lambda_i}$ because $B$ is free. Because these tensor products remain concentrated in degree zero, the bar-complex and cobar-complexes remain in degree zero. Then, the claim in \cite[Corollary 4.1.13]{KS20} that $\mathcal E^\alpha (B)$ is in degree zero and the claim in \cite[Proposition 4.2.17]{KS20} that $\mathcal E^\bullet_n(B)$ is perverse both remain true. Everything else works as intended, so we obtain a factorizable perverse sheaf $\mathcal G=(\mathcal E^\bullet_n(B))_n$ with vanishing cycles $\Phi_{\tot}(\mathcal G_n)\cong B_n$ which is free over $k[t]$. 

For the reverse functor $\Phi$, starting from some $\mathcal G\in \text{FPS}(\mathcal W)$ with $\Phi_{\tot}(\mathcal G_n)$ free over $k[t]$, it is clear that the primitive bialgebra $B$ we get is free over $k[t]$ by definition. Similar to the previous paragraph, tensor products which are meant to be non-derived are still in degree zero because of freeness. The rest of the construction already involves derived functors and only invokes results which do not assume the biexactness of $\otimes$, so the proof goes through as it should.  The analogue of Proposition \ref{prop: equivalence properties}(d) follows as well.
\end{proof}
\subsection{Lifting filtrations of primitive bialgebras} We first define carefully the notion of filtrations on objects in $\mathcal V$ and $\text{PB}(\mathcal V)$. By convention, we only look at increasing filtrations, as we can flip any decreasing filtration to an increasing one. 
\begin{definition}\label{def: finite filtration PB}
A finite increasing filtration on $V\in \mathcal V$ is a sequence $\cdots \subseteq F_0V\subseteq F_1V\subseteq \cdots$ such that $F_sV=0$ for $s\ll 0$ and $F_sV=V$ for $s\gg 0$. An increasing filtration on $A\in \text{PB}(\mathcal V)$ is a sequence $\cdots \subseteq F_0A\subseteq F_1A\subseteq \cdots$ such that the associated graded $A^{gr}=\bigoplus_{s\in \Z}\Gr_sA$ is also a primitive bialgebra which respects the grading $s$ and for each algebraic degree $n$ the filtration $F_sA_n$ is finite.
\end{definition}
We unwind the condition that $A^{gr}$ is a bialgebra as follows. Both multiplication and comultiplication needs to descend to $\mu^{gr}\colon A^{gr}\otimes A^{gr}\rightarrow A^{gr}$ and $\Delta^{gr}\colon A^{gr}\rightarrow A^{gr}\otimes A^{gr}$, which is equivalent to having $\mu(F_s(A\otimes A))\subseteq F_{s}A$ and $\Delta(F_sA)\subseteq F_s(A\otimes A)$ where $F_s(A\otimes A)=\bigcup_{u+v=s}F_uA\otimes F_vA\subseteq A\otimes A$. The unit and counit should be in grading $s=0$, i.e. $F_{-1}A_0=\{0\}$ and $F_0A_0=A_0=k$. Once we have these, the compatibility conditions for $A^{gr}$ will automatically descend from that of $A$.

Now, we prove (a) in Equation \eqref{eqn: diagram for equivalence proof} by lifting these filtrations to $B\in \text{PB}(\mathcal W)$. This turns out to be rather tautological as the finite generation corresponds to the finite filtration condition and the condition that $A^{gr}$ is a primitive bialgebra corresponds to the condition that $B$ is a primitive bialgebra. 
\begin{proposition}\label{prop: a equivalence}
We have the equivalence
\[
\left\{
\text{Filtrations of }
A\in \text{PB}(\mathcal V)
\right\}
\xleftrightarrow{\ \sim\ }
\left\{
B \in \text{PB}(\mathcal W)
\text{ free over }k[t]
\right\}.
\]
\end{proposition}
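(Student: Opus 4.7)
The approach is to exploit the philosophy that a finite filtration on an object of $\mathcal V$ is the same data as a finitely generated free graded $k[t]$-module, and that this identification transforms the filtered tensor product into $\otimes_{\mathcal W}$. Granting these two identifications, the bialgebra axioms transport component by component, and the proposition becomes essentially tautological.

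First I would establish the underlying object-level equivalence. For $V \in \mathcal V$, a finite increasing filtration $F_\bullet V$ is the same data as a $\Z$-indexed diagram in $\mathcal V$ with injective transition maps that vanishes for $s \ll 0$ and stabilizes for $s \gg 0$; this is precisely the data of a finitely generated object of $\mathcal W$ whose underlying graded $k[t]$-module is torsion-free, since injectivity of every transition map is equivalent to $t$ acting without torsion. Because $k[t]$ is a graded PID, a finitely generated torsion-free graded $k[t]$-module is free; conversely, a finitely generated free graded $k[t]$-module is a direct sum of degree-shifted copies of $k\{t\}$ and so consists of injections that stabilize, recovering a finite filtration of $V \coloneqq \mathrm{colim}_s M_s$ by $F_s V \coloneqq M_s$. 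The construction is functorial on both sides (morphisms correspond to filtration-preserving maps), setting up a bijection on objects and morphisms.

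Next I would verify that this bijection intertwines the filtered tensor product $F_s(V \otimes W) = \sum_{u+v=s} F_u V \otimes F_v W \subseteq V \otimes W$ with $\otimes_{\mathcal W}$. For free $B = F_\bullet V$ and $B' = F_\bullet W$, the formula \eqref{eqn: degree n term of tensor product over k[t]} presents $(B \otimes_{\mathcal W} B')_s$ as the quotient identifying $tm \otimes n$ with $m \otimes tn$, and unwinding this gives exactly the image of $\bigoplus_{u+v=s} F_u V \otimes F_v W \to V \otimes W$, namely $F_s(V \otimes W)$. Freeness is what makes this clean: $\otimes_{\mathcal W}$ is not exact in general, but on free modules it reduces to this description, and in the same way the braiding $R_{B, B'}$ restricts on each filtered piece to the braiding of the filtered tensor product.

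Finally I would upgrade to primitive bialgebras. Given a filtration $F_\bullet A$ of $A \in \text{PB}(\mathcal V)$, define $B_n \coloneqq \{F_s A_n\}_s \in \mathcal W$, which is finite by hypothesis and free by the first step. Under the identification of the second step, the compatibility conditions $\mu(F_s(A \otimes A)) \subseteq F_s A$ and $\Delta(F_s A) \subseteq F_s(A \otimes A)$ spelled out after Definition \ref{def: finite filtration PB} are precisely what is needed to assemble $\mu$ and $\Delta$ into morphisms $B \otimes_{\mathcal W} B \to B$ and $B \to B \otimes_{\mathcal W} B$ in $\mathcal W$, while the unit and counit conditions match $F_{-1} A_0 = 0$ and $F_0 A_0 = k$ with the unit $k\{t\} \to B_0$ and counit $B_0 \to k\{t\}$. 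The remaining bialgebra axioms and the (co)connectedness conditions descend and ascend componentwise. Conversely, for $B \in \text{PB}(\mathcal W)$ free over $k[t]$, taking the colimit in each algebraic degree gives $A \in \text{PB}(\mathcal V)$ with $F_s A_n \coloneqq B_{n,s}$, and the two constructions are mutually inverse by construction. I do not expect a serious obstacle here; the only real content is the second step, where freeness is essential because $\otimes_{\mathcal W}$ is only well-behaved (and agrees with the filtered tensor product) on free modules.
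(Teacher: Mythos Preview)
Your proposal is correct and follows essentially the same approach as the paper: both build $B_n$ from the filtration diagram $\{F_sA_n\}_s$, observe freeness from injectivity of the inclusions, identify $(B\otimes_{\mathcal W}B)_s$ with $F_s(A\otimes A)$ via Equation~\eqref{eqn: degree n term of tensor product over k[t]}, and then transport $\mu$ and $\Delta$ degree by degree using the conditions spelled out after Definition~\ref{def: finite filtration PB}. Your write-up is slightly more explicit about why torsion-free implies free over the graded PID $k[t]$, but the argument is the same.
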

\begin{proof}
Suppose we have a filtration $F_sA$. From this we define $B=\bigoplus_{n=0}^\infty B_n$ where $B_n=\{\cdots \hookrightarrow F_0A_n\hookrightarrow F_1A_n\hookrightarrow \cdots\}$ is the diagram associated to $F_sA_n$. Each $B_n$ is free because the diagram maps are inclusions, and finitely generated because the filtration on each $A_n$ is finite.

By Equation \eqref{eqn: degree n term of tensor product over k[t]}, the degree $s$ part (with respect to the grading on $k[t]$) of $B\otimes_{k[t]} B$ is
$$\left(\bigoplus_{u+v=s}F_uA\otimes_k F_vA \right) \Big/ (t\otimes_k 1-1\otimes_k t) \left(\bigoplus_{u+v=s-1}F_uA\otimes_k F_vA \right),$$
where the maps $t$ here are simply inclusions. It is clear that when viewed inside $A\otimes A$ this is simply
$$F_s(A\otimes A)=\bigcup_{u+v=s} F_uA\otimes_k F_vA$$
as the quotient identifies elements that are the same in $A\otimes A$. 

Recall that since $A^{gr}$ is a primitive bialgebra, we have $\mu_A\colon F_s(A\otimes A)\rightarrow F_sA$ and $\Delta_A\colon F_sA\rightarrow F_s(A\otimes A)$. This this allows us to define the degree $s$ part of $\mu_B$ and $\Delta_B$ respectively, so $B$ is indeed a primitive bialgebra in $\mathcal W$.

Conversely, starting from a primitive bialgebra $B$, we can recover $F_sA$ by looking at the degree $s$ part of $B$. The same argument shows that this is indeed a filtration.
\end{proof}
\subsection{Lifting filtrations of factorizable perverse sheaves} We first define filtrations on $\text{FPS}(\mathcal V)$ in an analogous way to Definition \ref{def: finite filtration PB}.
\begin{definition}\label{def: filtration FPS}
A filtration of $\mathcal F=(\mathcal F_n)_n\in \text{FPS}(\mathcal V)$ is a sequence $\cdots \subseteq F_0\mathcal F \subseteq F_1\mathcal F\subseteq \cdots$ such that the associated graded $\mathcal F^{gr}=\bigoplus_{s\in \Z}\Gr_s\mathcal F$ is a factorizable perverse sheaf, and for each $n$ the filtration $F_s\mathcal F_n$ is finite, that is $F_s\mathcal F_n=0$ for $s\ll 0$ and $F_s\mathcal F_n=\mathcal F_n$ for $s\gg 0$.
\end{definition}
We want to prove (c) in Equation \eqref{eqn: diagram for equivalence proof} given as follows.
\begin{proposition}\label{prop: c equivalence}
We have the equivalence
\[
\left\{
\text{Filtrations of }
\mathcal F\in \text{FPS}(\mathcal V)
\right\}
\xleftrightarrow{\ \sim\ }
\left\{
\mathcal G \in \text{FPS}(\mathcal W)\text{ with }
\Phi_{\tot}(\mathcal G_n)\text{ free over }k[t]
\right\}.
\]
\end{proposition}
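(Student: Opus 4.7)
The plan is to construct explicit inverse functors in analogy with the argument of Proposition~\ref{prop: a equivalence}, exploiting the principle that filtrations of perverse sheaves should lift to $\mathcal W$-valued perverse sheaves whose stalks are free over $k[t]$. Given a filtered $\mathcal F \in \text{FPS}(\mathcal V)$, I define $\mathcal G_n \in \Perv(\Sym^n(\C), S_n, \mathcal W)$ to be the $\mathcal W$-valued perverse sheaf whose stalk at each $x$ is the diagram $\{\cdots \hookrightarrow (F_0 \mathcal F_n)_x \hookrightarrow (F_1 \mathcal F_n)_x \hookrightarrow \cdots\}$ with filtration inclusions as arrows. Conversely, given $\mathcal G \in \text{FPS}(\mathcal W)$ with $\Phi_{\tot}(\mathcal G_n)$ free over $k[t]$, I take $\mathcal F_n$ to be the stabilized colimit of the stalk diagram and define $F_s \mathcal F_n$ as the image in $\mathcal F_n$ of the degree-$s$ piece of $\mathcal G_n$. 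The well-definedness checks reduce to componentwise statements: perversity of $\mathcal G_n$ over $\mathcal W$ reduces to perversity of each $F_s \mathcal F_n$, which holds because filtration pieces are perverse subsheaves, while freeness of $\Phi_{\tot}(\mathcal G_n)$ in the forward direction follows from exactness of the total vanishing cycles functor on perverse sheaves, which sends $F_s \mathcal F_n$ to a filtration of $\Phi_{\tot}(\mathcal F_n)$ that, viewed as a diagram, is torsion-free and hence free over the PID $k[t]$. Going the reverse way, freeness forces the stalkwise diagram maps of $\mathcal G_n$ to be injective, so $F_s \mathcal F_n$ genuinely embeds in $\mathcal F_n$.

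The central task is matching factorizations. Using the freeness condition on both sides, the derived exterior tensor product $\boxtimes^{\mathbf L}$ collapses to the underived $\boxtimes$, and its degree-$s$ component equals $\sum_{\sum s_i = s} \boxtimes F_{s_i} \mathcal F|_{\Sym(U_i)}$ as a subsheaf of $\boxtimes \mathcal F|_{\Sym(U_i)}$. The factorization of $\mathcal F$ identifies the ambient sheaf with $a^* \mathcal F|_{\Sym(\sqcup U_i)}$, and I would identify the subsheaf with $a^* F_s \mathcal F|_{\Sym(\sqcup U_i)}$ by induction on $s$: the factorization of $\mathcal F^{gr}$, which by Definition~\ref{def: filtration FPS} is the associated graded of the factorization of $\mathcal F$, provides the matching on graded pieces, and applying the five lemma to $0 \to F_{s-1}\mathcal F \to F_s \mathcal F \to \Gr_s \mathcal F \to 0$ propagates this to the full filtration. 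Operadic compatibilities of $\mathcal G$ then descend from those of $\mathcal F$ and $\mathcal F^{gr}$.

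The main obstacle is the non-biexactness of $\otimes$ in $\mathcal W$, which a priori allows $\boxtimes^{\mathbf L}$ to introduce higher-degree cohomology via $\Tor_1^{k[t]}$ contributions and break perversity, exactly as cautioned in the discussion preceding Proposition~\ref{prop: b equivalence}. The freeness condition on $\Phi_{\tot}(\mathcal G_n)$ is precisely what forces these Tor-terms to vanish in all tensor products appearing in the factorization isomorphisms, so $\boxtimes^{\mathbf L}$ remains perverse on the nose and the compatibility argument reduces to abelian-categorical statements. I would conclude by unwinding definitions to verify that the two functors are mutually inverse up to natural isomorphism, using that the $k[t]$-module structure on a diagram of inclusions is uniquely determined by its sequence of subobjects and that, conversely, a free diagram determines a unique filtration on its stabilized colimit.
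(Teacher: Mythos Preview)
Your overall architecture matches the paper's: build $\mathcal G_n$ as the diagram of filtration pieces, check freeness of $\Phi_{\tot}$ via exactness, and verify factorizability degree-by-degree using the five lemma on graded pieces together with factorizability of $\mathcal F^{gr}$. That part is fine and essentially what the paper does, relying on Lemmas~\ref{lem: exterior product perverse} and~\ref{lem: free x free = free} to collapse $\boxtimes^{\mathbf L}$ to $\boxtimes$ and identify the successive quotients.

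The genuine gap is in the reverse direction. You assert that ``freeness forces the stalkwise diagram maps of $\mathcal G_n$ to be injective,'' but freeness is only assumed for $\Phi_{\tot}(\mathcal G_n)$, not for $\mathcal G_n$ itself. The functor $\Phi_{\tot}$ is the vanishing cycles at the origin; it is exact, so it preserves monomorphisms, but there is no reason a priori that $\Phi_{\tot}(f)$ injective implies $f$ injective. A kernel $K$ of $(\mathcal G_n)_{s-1}\to(\mathcal G_n)_s$ could in principle be a nonzero perverse sheaf with $\Phi_{\tot}(K)=0$. The paper closes this gap with a nontrivial inductive argument: take the minimal $n$ for which $\mathcal G_n$ is not free, use factorizability together with Lemma~\ref{lem: free x free = free} (applied to the already-free lower-degree pieces) to conclude that $K$ vanishes on every open of the form $\Sym^p(U)\times\Sym^q(V)$, hence $K$ is supported on the diagonal; then $\Phi_{\tot}(K)=0$ and support on the diagonal together force $K=0$, since both nearby and vanishing cycles vanish. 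Without this step you cannot define $F_s\mathcal F_n$ as genuine subobjects, and the rest of the reverse construction does not get off the ground.
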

This turns out to be the most involved out of the three equivalences, and we need to do some setup before the proof. 
\subsubsection{Sheaves valued in $\mathcal W$} We will show that (constructible, derived, perverse) sheaves in $\mathcal W$ are given by finite diagrams of (constructible, derived, perverse) sheaves in $\mathcal V$.

Let $\Shv(X,S,\mathcal C)$ be the constructible sheaves on $X$ with respect to $S$ valued in $\mathcal C$. Recall that a $\Z$-indexed finite diagram in an abelian category $\mathcal C$ is $\{\cdots \rightarrow C_0\rightarrow C_1\rightarrow \cdots\}$ where $C_s=0$ for $s\ll 0$ and $C_s\xrightarrow \sim C_{s+1}$ for $s\gg 0$. 

We have the following proposition which basically says that taking $\Z$-indexed finite diagrams commutes with both the derived category construction and the derived functors $f^*,f_*,f^!,f_!,\Phi_{\tot}$. This is not surprising as these constructions are ``geometric'' in the sense that they only care about the underlying vector space structure which is just a direct sum over all $t$-degrees, without looking at the tensor product in $\mathcal W$ where the algebraic structure of $k[t]$ will come in. 
\begin{proposition}\label{prop: sheaves and chains}
Let $S$ be a finite stratification for $X$. There are equivalences between the following pairs of categories.
\begin{enumerate}[(\alph*)]
\item $\Shv(X,S,\mathcal W)$ and $\Z$-indexed finite diagrams in $\Shv(X,S,\mathcal V)$.
\item $C^b(X,S,\mathcal W)$ and $\Z$-indexed finite diagrams in $C^b(X,S,\mathcal V)$.
\item $D^b(X,S,\mathcal W)$ and $\Z$-indexed finite diagrams in $D^b(X,S,\mathcal V)$.
\item $\Perv(X,S,\mathcal W)$ and $\Z$-indexed finite diagrams in $\Perv(X,S,\mathcal V)$.
\end{enumerate}
Furthermore, the equivalence in (c) commutes with the derived functors $f^*,f_*,f^!,f_!$ and $\Phi_{\tot}$.
\end{proposition}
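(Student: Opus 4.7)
The plan is to prove the equivalences (a)--(d) inductively, each following from the previous by unpacking definitions, and then to verify compatibility with the derived functors.

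For (a), an object of $\mathcal W$ is tautologically a finite $\Z$-indexed diagram in $\mathcal V$, so a $\mathcal W$-valued presheaf is, by post-composition, a finite $\Z$-indexed diagram of $\mathcal V$-valued presheaves. The sheaf axiom is a limit condition, and limits in $\mathcal W$ are inherited levelwise from those in $\mathcal V$; the stabilization conditions defining $\mathcal W$ are preserved under taking sections, and constructibility is a levelwise condition on strata. Part (b) is then immediate by interchanging the (finite) diagram structure with the bounded complex structure, which commute since chain complexes in a functor category are the same as a diagram of chain complexes.

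For (c), the key observation is that kernels, cokernels, and hence cohomology sheaves in $\Shv(X,S,\mathcal W)$ are computed levelwise, so the cohomology sheaf $\mathcal H^i(K)$ of a complex $K$ in $C^b(X,S,\mathcal W)$ is simply the diagram $\{\mathcal H^i(K_s)\}_s$. Thus quasi-isomorphisms in $C^b(X,S,\mathcal W)$ are exactly the levelwise quasi-isomorphisms, and localizing gives the equivalence of $D^b(X,S,\mathcal W)$ with finite diagrams in $D^b(X,S,\mathcal V)$. For the converse direction, given a diagram in $D^b(X,S,\mathcal V)$, one lifts each term and transition map to the chain level using projective resolutions (availing oneself of the fact from Section 3.1.1 that objects of $\mathcal W$ admit free resolutions of length at most $1$), obtaining an object of $D^b(X,S,\mathcal W)$ well-defined up to quasi-isomorphism. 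Part (d) then follows because the perversity conditions $\mathcal H^i(i_\alpha^* K) = 0$ for $i > -\dim_{\mathbb C} X_\alpha$ and $\mathcal H^i(i_\alpha^! K) = 0$ for $i < -\dim_{\mathbb C} X_\alpha$ can be checked levelwise, as $i_\alpha^*$ and $i_\alpha^!$ commute with the diagram structure.

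For commutativity with $f^*, f_*, f^!, f_!$ and $\Phi_{\tot}$: these are all geometric functors that depend only on the underlying $\mathcal V$-sheaf data at each level $s$, not on the $k[t]$-tensor structure on $\mathcal W$ (which enters the story only through $\otimes_{\mathcal W}$ and $\boxtimes_{\mathcal W}$). Consequently, applying any of these functors to a diagram of sheaves produces the diagram obtained by applying the functor level-by-level. The main obstacle lies in (c): in general the derived category of a functor category is not equivalent to the functor category of derived categories, so one must justify that for our specific indexing and finiteness assumptions the identification is literal. This works because $\mathcal W$ sits inside the functor category with levelwise exactness, so quasi-isomorphisms are detected and constructed one level at a time; combined with the availability of short free resolutions, this is enough to avoid higher-coherence obstructions despite the simplicity of the totally ordered indexing.
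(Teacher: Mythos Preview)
Your approach is essentially the same as the paper's, and the outline is sound, but two points deserve tightening.

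First, in (a) you assert that ``the stabilization conditions defining $\mathcal W$ are preserved under taking sections,'' but this only shows that each $\mathcal G(U)$ is a finite diagram; it does not give a uniform bound on the index $s$ independent of $U$. Without this, the levelwise sheaves $\{\mathcal F_s\}$ need not form a \emph{finite} diagram in $\Shv(X,S,\mathcal V)$. The paper uses the hypothesis that $S$ is finite precisely here: on each stratum the sheaf is locally constant, hence determined by a single object of $\mathcal W$ with its own finite bounds, and finitely many strata give a common bound.

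Second, your invocation of ``free resolutions of length at most $1$'' from Section~3.1.1 is misplaced: those resolutions are for objects of $\mathcal W$ as $k[t]$-modules, not for complexes of sheaves. What actually makes the lift in (c) work is that the indexing poset $\Z$ is a free category (no relations to enforce), so once you replace each $K_s$ by an injective complex the transition maps lift to honest chain maps with no coherence obstruction. Relatedly, for the compatibility with derived functors the paper is more explicit than your ``these are geometric'' sentence: it first checks that (a) commutes with the underived $f_*,f_!$, derives via injective resolutions, obtains $f^*,f^!$ by adjunction, and gets $\Phi_{\tot}$ as a composite of the four. Your final paragraph correctly identifies the coherence issue in (c) but does not quite resolve it; the free-category observation is the missing ingredient.
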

\begin{proof}
For (a), given a $\Z$-indexed finite diagram $\{\cdots \rightarrow \mathcal F_0\rightarrow \mathcal F_1\rightarrow \cdots\}$ in $\Shv(X,S,\mathcal V)$, we construct $\mathcal G\in \Shv(X,S,\mathcal W)$ as follows. For any open $U\subseteq X$, let $\mathcal G(U)=\{\cdots \rightarrow \mathcal F_0(U)\rightarrow \mathcal F_1(U)\rightarrow \cdots\}\in \mathcal W$ as it is a finite diagram. We can check that this satisfies the sheaf condition degree by degree. Conversely, given $\mathcal G\in \Shv(X,S,\mathcal W)$, we define $\mathcal F_s(U)$ to be the degree $s$ part of $\mathcal G(U)$, and similarly $\mathcal F_s$ satisfies the sheaf property. For each $U$ it is true that $\mathcal F_s(U)=0$ for $s\ll 0$ and $\mathcal F_s(U)\xrightarrow\sim \mathcal F_{s+1}(U)$ for $s\gg 0$. Because our stratification is finite, this means that $\mathcal F_s=0$ for $s\ll 0$ and $\mathcal F_s\xrightarrow\sim \mathcal F_{s+1}$ for $s\gg 0$ so the corresponding diagram is finite.

(b) and (c) follows formally from (a) by choosing a representing chain complex and checking that quasi-isomorphisms agree on both sides. The finite diagram condition ensures that our chain complexes are still bounded. For (d), we can check perversity using stalks and costalks, and it easily reduces to the statement that taking stalks in $D^b(X,S,\mathcal W)$ is the same as taking stalks pointwise in the diagram, which is a subcase of what we prove next.

It remains to show that the equivalence in (c) commutes with the functors listed above. The equivalence (a) commutes with the non-derived $f_*$ and $f_!$ by our construction, and so the derived functor commutes with (c) as we can replace each object in the diagram by its injective resolution. From this, we conclude that the equivalence in (c) commutes with $f^*$ and $f^!$ via adjunction, and $\Phi_{\tot}$ because it can be expressed in the previous four functors.
\end{proof}
\subsubsection{Derived tensor on sheaves valued in $\mathcal W$} Unlike the functors above, the story is not that simple for the tensor product because it is not completely geometric and involves the algebraic structre of $k[t]$. Our discussion here will be similar to the derived tensor product for graded $k[t]$-modules. Recall that to take $\mathcal F\boxtimes^\mathbf L\mathcal G$ we need to replace either $\mathcal F$ or $\mathcal G$ by a flat resolution then take the regular $\boxtimes$. For $K\in D^b(X,S,\mathcal V)$, define $K\{t\}\in D^b(X,S,\mathcal W)$ and the shift by $t^s$ in $D^b(X,S,\mathcal W)$ just like at the start of Section \ref{sec: braided monoidal abelian category}. 

By unwinding definitions for tensor product of sheaves, it is easy to see that $t^sK\{t\}$ is a flat complex of sheaves. Furthermore, given $M=\{\cdots \rightarrow M_0\rightarrow M_1\rightarrow \cdots \}\in D^b(Y,T,\mathcal W)$, we have $t^sK\{t\}\boxtimes^\mathbf L_{\mathcal W} M = t^sK\{t\}\boxtimes_{\mathcal W} M = t^s K\boxtimes_{\mathcal V} M$ where we write $K\boxtimes_{\mathcal V} M=\{\cdots \rightarrow K\boxtimes_{\mathcal V} M_0\rightarrow K\boxtimes_{\mathcal V} M_1\rightarrow \cdots\}$ where $K\boxtimes_{\mathcal V} M_0$ is in degree $t^s$. We will drop the subscripts $\mathcal V$ and $\mathcal W$ on $\boxtimes$ when the context is sufficiently clear.

Given $\mathcal F =\{\cdots \rightarrow \mathcal F_0\rightarrow \mathcal F_1\rightarrow \cdots \}\in \Perv(X,S,\mathcal W)$, we can construct a flat resolution in $D^b(\Perv(X,S,\mathcal W))=D^b(X,S,\mathcal W)$ given by
$$\left\{\bigoplus_{s\in I} t^{s+1}\mathcal F_s\{t\}\rightarrow \bigoplus_{s\in I}  t^s\mathcal F_s\{t\} \right\}$$
where the complex is in perverse degeree $-1,0$. Here, the maps are given by $t^{s+1}\mathcal F_s\{t\}\rightarrow t^s\mathcal F_s\{t\}\oplus t^{s+1}\mathcal F_{s+1}\{t\}$ where in degree $t^i$ for $i\ge s+1$ it is $\mathcal F_s\rightarrow \mathcal F_s\oplus \mathcal F_{s+1}$ where the first component is identity and the second is given by the diagram map in $\mathcal F$. Again, here we are able to restrict the sum over $s$ to a finite interval $I$. To see that this is indeed quasi-isomorphic to $\mathcal F$, we note that the complex $\{t^{s+1}\mathcal F_s\{t\}\rightarrow t^s\mathcal F_s\{t\}\}$ is quasi-isomorphic to $t^s\mathcal F_s$ in perverse degree $0$, and because of the way the maps are defined, we recover $\mathcal F$ when combining over all $s$.

Hence, if we have $\mathcal G=\{\cdots \rightarrow \mathcal G_0\rightarrow \mathcal G_1\rightarrow \cdots\}\in \Perv(Y,T,\mathcal W)$, we can get $\mathcal F\boxtimes^\mathbf L\mathcal G$ by
$$\left\{\bigoplus_{s\in I} t^{s+1}\mathcal F_s\boxtimes_{\mathcal V}\mathcal G\rightarrow \bigoplus_{s\in I}  t^s\mathcal F_s\boxtimes_{\mathcal V}\mathcal G \right\}$$
in perverse degree $-1,0$. In degree $t^s$ this would be
\begin{equation}\label{eqn: degree t^s derived exterior product as complex}
\left\{\bigoplus_{u+v=s-1}\mathcal F_u\boxtimes G_v\rightarrow \bigoplus_{u+v=s}\mathcal F_u\boxtimes G_v \right\}
\end{equation}
induced by the maps in $\mathcal F$ and $\mathcal G$. Again, the sum over $u$ and $v$ can be made to be finite. Taking cohomology in perverse degree $0$, we recover the analogue of Equation \eqref{eqn: degree n term of tensor product over k[t]} 
\begin{equation}\label{eqn: degree t^s derived exterior product as complex as quotient}
\left(\bigoplus_{u+v=s}\mathcal F_u\boxtimes \mathcal G_v \right) \Big/ (t\boxtimes 1-1\boxtimes t) \left(\bigoplus_{u+v=s-1}\mathcal F_u\boxtimes \mathcal G_v \right),
\end{equation}
where again, abusing notation, we use $t$ here to denote the maps in $\mathcal F$ and $\mathcal G$. This is not surprising as the perverse cohomology of the derived tensor product in degree zero should just be the classical tensor product. 

For the cohomology in perverse degree $-1$, we prove the following lemma. Say that $\mathcal  F= \{\cdots \rightarrow \mathcal F_0\rightarrow \mathcal F_1 \rightarrow \cdots\}\in \Perv(X,S,\mathcal W)$ is free if all the maps are monomorphisms.
\begin{lemma}\label{lem: exterior product perverse}
If either $\mathcal F$ or $\mathcal G$ is free, the exterior product $\mathcal F \boxtimes^\mathbf L\mathcal G$ is perverse, i.e. the cohomology in perverse degree $-1$ vanishes.
\end{lemma}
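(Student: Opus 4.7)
The plan is to work with the explicit 2-term representation \eqref{eqn: degree t^s derived exterior product as complex} of $\mathcal F\boxtimes^{\mathbf L}\mathcal G$ in degree $t^s$ and show that its differential $d$ is a monomorphism in the ambient abelian category of perverse sheaves. This is the natural analog of the fact that torsion-free finitely generated $k[t]$-modules are flat. Writing $P_s=\bigoplus_{u+v=s-1}\mathcal F_u\boxtimes\mathcal G_v$ and $Q_s=\bigoplus_{u+v=s}\mathcal F_u\boxtimes\mathcal G_v$, the differential restricted to the $u$-summand is $(\mathrm{id}_{\mathcal F_u}\boxtimes g_{\mathcal G,s-1-u})-(f_{\mathcal F,u}\boxtimes\mathrm{id}_{\mathcal G_{s-1-u}})$ up to sign, where $f_{\mathcal F,\bullet}$ and $g_{\mathcal G,\bullet}$ denote the diagram maps. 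Both direct sums are finite because $\mathcal F$ and $\mathcal G$ are finite diagrams, and each summand is perverse by biexactness of $\otimes$ in $\mathcal V$, so $\ker d$ makes sense as a subobject of $P_s$ in $\Perv(X\times Y,\mathcal V)$ and it computes the perverse cohomology of $\mathcal F\boxtimes^{\mathbf L}\mathcal G$ in degree $-1$.

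Assume without loss of generality that $\mathcal F$ is free, so each $f_{\mathcal F,k}$ is a monomorphism in $\Perv(X,\mathcal V)$. I would argue via the subobject filtration $P_s^{\le k}=\bigoplus_{u\le k}\mathcal F_u\boxtimes\mathcal G_{s-1-u}\subseteq P_s$, setting $K=\ker d$ and $K^{\le k}=K\cap P_s^{\le k}$. The key observation is that the composition of $d$ with the projection $Q_s\twoheadrightarrow \mathcal F_{k+1}\boxtimes\mathcal G_{s-1-k}$ onto the $(k+1)$-summand vanishes on $P_s^{\le k-1}$, because for $u\le k-1$ the differential $d$ maps the $u$-summand of $P_s$ into summands of $Q_s$ indexed by $u$ and $u+1\le k$. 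Hence this composition factors through $P_s^{\le k}/P_s^{\le k-1}\cong\mathcal F_k\boxtimes\mathcal G_{s-1-k}$, where it becomes precisely $f_{\mathcal F,k}\boxtimes\mathrm{id}_{\mathcal G_{s-1-k}}$, a monomorphism in $\Perv$ thanks to freeness of $\mathcal F$ and biexactness of $\boxtimes$ on perverse sheaves.

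Since $d$ vanishes on $K$, the image of $K^{\le k}$ in $P_s^{\le k}/P_s^{\le k-1}$ is zero, forcing $K^{\le k}=K^{\le k-1}$. Starting from $K^{\le k}=0$ for $k$ below the support of $\mathcal F$ and iterating gives $K=0$, which is what we want. The case when $\mathcal G$ is free instead is completely symmetric: use the dual filtration $P_s^{\ge k}$ and the monomorphism $\mathrm{id}_{\mathcal F_k}\boxtimes g_{\mathcal G,s-1-k}$ landing in the $k$-summand of $Q_s$. The main subtlety is categorical: one cannot talk directly about the "leading nonzero coordinate" of a kernel element inside $\Perv$, which is why the argument proceeds through the subobject filtration $P_s^{\le k}$ and compares successive graded pieces rather than reasoning about individual elements.
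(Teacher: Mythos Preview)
Your proof is correct and follows essentially the same idea as the paper's: both exploit that $f_{\mathcal F,k}\boxtimes\mathrm{id}$ is a monomorphism (by biexactness of $\boxtimes$ over $\mathcal V$) to run a ``leading nonzero coordinate'' argument on the two-term complex \eqref{eqn: degree t^s derived exterior product as complex}. The only difference is in packaging. The paper reduces to the category of abelian groups via Yoneda --- applying $\Hom(Z,-)$ for an arbitrary test object $Z$ --- and then argues elementwise by picking the lowest index $i$ with $x_i\neq 0$. You instead stay in the abelian category of perverse sheaves and phrase the same induction through the subobject filtration $P_s^{\le k}$, showing $K^{\le k}=K^{\le k-1}$ at each step. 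Your version is arguably cleaner in that it avoids the Yoneda detour, while the paper's version makes the underlying ``zigzag'' picture more explicit; but the mathematical content is identical.
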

\begin{proof}
Suppose the maps $\mathcal F_u\hookrightarrow \mathcal F_{u+1}$ in $\mathcal F$ are monomorphisms. As $\otimes$ is bi-exact for $\mathcal V$, the exterior product $\boxtimes$ is also bi-exact, so we have monomorphisms $\mathcal F_u\boxtimes \mathcal G_v\hookrightarrow \mathcal F_{u+1}\boxtimes \mathcal G_v$. We want to show that Equation \eqref{eqn: degree t^s derived exterior product as complex} is a monomorphism, it is the finite direct sum of maps of the form
\[\begin{tikzcd}
	\cdots & {A_0} & {A_1} & {A_2} & \cdots \\
	\cdots & {B_0} & {B_1} & {B_2} & \cdots .
	\arrow[from=1-1, to=2-2]
	\arrow[hook, from=1-2, to=2-2]
	\arrow[from=1-2, to=2-3]
	\arrow[hook, from=1-3, to=2-3]
	\arrow[from=1-3, to=2-4]
	\arrow[hook, from=1-4, to=2-4]
	\arrow[from=1-4, to=2-5]
\end{tikzcd}\]
We show this is a monomorphism in any abelian category. Suppose we are in the category of abelian groups, then this is injective because if $(x_i)\in \bigoplus_{i\in I} A_i$ goes to zero where $I$ is a finite interval, then we consider the lowest index $i$ where $x_i\neq 0$, but this maps to a nonzero element in $B_i$ and $x_{i-1}=0$ maps to zero in $B_i$, giving a contradiction. For general abelian categories, we adapt the proof using the Yoneda lemma by looking at the Homs from an object $Z$ which are abelian groups, and use the fact that a map $A\rightarrow B$ is a monomorphism if and only $\Hom(Z,A)\rightarrow \Hom(Z,B)$ is a monomorphism for all $Z$.
\end{proof}
There is a natural extension of the lemma given as follows.
\begin{lemma}\label{lem: free x free = free}
If both $\mathcal F$ and $\mathcal G$ are free, then the perverse sheaf $\mathcal F\boxtimes^\mathbf L\mathcal G$ is free. Furthermore, we have
$$(\mathcal F\boxtimes^\mathbf L\mathcal G)_s/(\mathcal F\boxtimes^\mathbf L\mathcal G)_{s-1}=\bigoplus_{u+v=s} (\mathcal F_u/\mathcal F_{u-1})\boxtimes (\mathcal G_v/\mathcal G_{v-1}).$$
\end{lemma}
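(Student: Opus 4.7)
The plan is to establish the two claims separately: the associated graded formula by a direct computation from the explicit presentation in Equation \eqref{eqn: degree t^s derived exterior product as complex as quotient}, and freeness by induction on the number of nontrivial graded pieces of $\mathcal F$ using the Snake Lemma. The base case of the induction reduces to the single-graded-piece case where flatness applies, while the inductive step crucially uses Lemma \ref{lem: exterior product perverse} to preserve perversity throughout a short exact sequence.

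For the associated graded formula, set $C_r := \bigoplus_{u+v=r} \mathcal F_u \boxtimes \mathcal G_v$ and $\partial := t \boxtimes 1 - 1 \boxtimes t$. By Lemma \ref{lem: exterior product perverse}, $(\mathcal F \boxtimes^\mathbf L \mathcal G)_r = C_r / \partial C_{r-1}$, and the transition map $(\mathcal F \boxtimes^\mathbf L \mathcal G)_{r-1} \to (\mathcal F \boxtimes^\mathbf L \mathcal G)_r$ is induced by $t \boxtimes 1 : C_{r-1} \to C_r$, so the cokernel of the transition is $C_r / \bigl(\partial C_{r-1} + (t \boxtimes 1) C_{r-1}\bigr)$. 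Since $\partial = (t \boxtimes 1) - (1 \boxtimes t)$, this denominator coincides with $(t \boxtimes 1) C_{r-1} + (1 \boxtimes t) C_{r-1}$, which decomposes componentwise inside $C_r$ as $\bigoplus_{u+v=r}(\mathcal F_{u-1} \boxtimes \mathcal G_v + \mathcal F_u \boxtimes \mathcal G_{v-1})$. Bi-exactness of $\boxtimes$ in $\mathcal V$ then yields
$$\frac{(\mathcal F \boxtimes^\mathbf L \mathcal G)_r}{(\mathcal F \boxtimes^\mathbf L \mathcal G)_{r-1}} \;=\; \bigoplus_{u+v=r} \frac{\mathcal F_u \boxtimes \mathcal G_v}{\mathcal F_{u-1} \boxtimes \mathcal G_v + \mathcal F_u \boxtimes \mathcal G_{v-1}} \;=\; \bigoplus_{u+v=r} (\mathcal F_u/\mathcal F_{u-1}) \boxtimes (\mathcal G_v/\mathcal G_{v-1}),$$
as claimed.

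For freeness, I induct on the number of nontrivial graded pieces of $\mathcal F$. In the base case $\mathcal F = t^a K\{t\}$, the flatness of $t^s K\{t\}$ noted just before Lemma \ref{lem: exterior product perverse} gives $\mathcal F \boxtimes^\mathbf L \mathcal G = t^a (K \boxtimes_{\mathcal V} \mathcal G)$, whose transition maps $\mathrm{id}_K \boxtimes t_{\mathcal G}$ are monomorphisms by freeness of $\mathcal G$ and bi-exactness of $\boxtimes$. For the inductive step, let $u_k$ be the top nontrivial graded degree of $\mathcal F$ and consider the short exact sequence $0 \to \mathcal F' \to \mathcal F \to \mathcal F'' \to 0$ in $\Perv(X, S, \mathcal W)$ with $\mathcal F'_s := \mathcal F_{\min(s,\, u_k - 1)}$ embedded in $\mathcal F_s$ and $\mathcal F'' \cong t^{u_k}(\mathcal F_{u_k}/\mathcal F_{u_k-1})\{t\}$; both are free with strictly fewer graded pieces. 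By Lemma \ref{lem: exterior product perverse}, each of $\mathcal F' \boxtimes^\mathbf L \mathcal G$, $\mathcal F \boxtimes^\mathbf L \mathcal G$, and $\mathcal F'' \boxtimes^\mathbf L \mathcal G$ is perverse, so the derived tensor yields a short exact sequence in $\Perv(X \times Y, S \times T, \mathcal W)$. By Proposition \ref{prop: sheaves and chains}(d), this corresponds in each degree $r$ to a short exact sequence of $\mathcal V$-valued perverse sheaves compatible with transition maps; applying the Snake Lemma to the commutative diagram connecting degrees $r$ and $r+1$ then forces the middle vertical transition to be a monomorphism since the outer two are by induction. The main obstacle is precisely this injectivity step: the explicit presentation does not directly show that the transition on $C_r / \partial C_{r-1}$ is monic, which is why the induction (and the repeated use of Lemma \ref{lem: exterior product perverse} to propagate perversity) is needed.
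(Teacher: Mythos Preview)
Your proof is correct and takes a genuinely different route from the paper's. The paper argues by reducing to the case of abelian groups via a Yoneda argument: it identifies $\Perv(X\times Y,S\times T,\mathcal V)$ with the Deligne tensor product $\Perv(X,S,\mathcal V)\boxtimes \Perv(Y,T,\mathcal V)$ (citing \cite{Lyubashenko_2001}), so that $\Hom(\mathcal K\boxtimes\mathcal L,\mathcal F_u\boxtimes\mathcal G_v)\cong\Hom(\mathcal K,\mathcal F_u)\otimes\Hom(\mathcal L,\mathcal G_v)$, and then uses projective generators to test against all objects. Once reduced to abelian groups, Equation~\eqref{eqn: degree t^s derived exterior product as complex as quotient} becomes the union $\bigcup_{u+v=s}A_u\otimes B_v$ inside $(\bigcup_u A_u)\otimes(\bigcup_v B_v)$, from which both injectivity of the transition maps and the quotient formula are immediate.

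Your approach avoids this external structural input entirely: the cokernel computation is a direct manipulation of the presentation (using only bi-exactness of $\boxtimes$ over $\mathcal V$), and freeness is handled by an induction on the length of the filtration of $\mathcal F$, peeling off the top graded piece and invoking Lemma~\ref{lem: exterior product perverse} to keep everything perverse so that the Snake Lemma applies. This is more elementary and self-contained; the paper's argument is shorter once the Deligne tensor product machinery is in hand, and it treats injectivity and the quotient simultaneously rather than in two separate steps.
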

\begin{proof}
The previous lemma tells us that $\mathcal F\boxtimes^\mathbf L\mathcal G$ is perverse, so it is equal to its perverse cohomology in degree $0$ given by Equation \eqref{eqn: degree t^s derived exterior product as complex as quotient}. The map from degree $t^{s-1}$ to $t^s$ is $t\boxtimes 1=1\boxtimes t$ as both maps have been identified in the quotient, we want to check that this is a monomorphism, with quotient as given in the equation above.

As in the previous lemma, we want to pretend that we are working with a category of abelian groups, where we change the perverse sheaves $\mathcal F_u,\mathcal G_v$ to abelian groups $A_u,B_v$ and $\boxtimes$ to $\otimes$ where $A_u\hookrightarrow A_{u+1}$ and $B_v\hookrightarrow B_{v+1}$ for all $u,v$. If this were true, then Equation \eqref{eqn: degree t^s derived exterior product as complex as quotient} is simply the union $\bigcup_{u+v=s} A_u \otimes B_v$ in $(\bigcup_u A_u) \otimes (\bigcup_v B_v)$, the same argument was made in the proof of Proposition \ref{prop: a equivalence}. Then, it is clear that we have the injection $\bigcup_{u+v=s-1} A_u \otimes B_v\hookrightarrow \bigcup_{u+v=s} A_u \otimes B_v$. Furthermore, it is clear that the quotient is $\bigoplus_{u+v=s} (A_u/A_{u-1})\otimes (B_v/B_{v-1})$ as desired.

To reduce to the case of abelian groups, we again look at the Homs from objects in $\Perv(X\times Y,S\times T, \mathcal W)$. By \cite{Lyubashenko_2001}, the category $\Perv(X\times Y,S\times T, \mathcal V)$ is the Deligne tensor product of abelian categories $\Perv(X,S,\mathcal V) \boxtimes \Perv(Y,T,\mathcal V)$ introduced in \cite[Proposition 5.13]{Tannakian}, and the exterior tensor product matches the Deligne tensor product. Thus, we for $\mathcal K\in \Perv(X,S,\mathcal V)$ and $\mathcal L\in \Perv(Y,T,\mathcal V)$ that $\Hom(\mathcal K\boxtimes \mathcal L,\mathcal F_u\boxtimes \mathcal G_v) \cong \Hom(\mathcal K,\mathcal F_u)\otimes \Hom(\mathcal L, \mathcal G_v)$, so we can indeed take $A_u=\Hom(\mathcal K,\mathcal F_u)$ and $B_v=\Hom(\mathcal L, \mathcal G_v)$. However, we can only test Homs from objects of the form $\mathcal K\boxtimes \mathcal L$. To fix this, let $\mathcal K=\bigoplus_{u\in I} \mathcal F_u$ be the direct sum in the finite range $I$ of $u$, and $\langle \mathcal K\rangle $ be the abelian subcategory of $\Perv(X,S,\mathcal V)$ generated by the projective generator $\mathcal K$, see \cite[Section 5.12]{Tannakian}, this contains all $\mathcal F_u$. Likewise, define $\mathcal L=\bigoplus_{v\in J} \mathcal G_v$ and $\langle \mathcal L\rangle$. Then, the abelian subcategory $\langle \mathcal K\boxtimes \mathcal L\rangle=\langle \mathcal K\rangle\boxtimes \langle\mathcal L\rangle$ of $\Perv(X\times Y,S\times T, \mathcal V)$ contains all the relevant objects we are working with. Furthermore, there is a projective generator $P=\mathcal K\boxtimes \mathcal L$, so for any object $Z\in \langle \mathcal K\boxtimes \mathcal L\rangle$ there is a projective resolution $P^{\oplus m}\rightarrow P^{\oplus n}\rightarrow Z\rightarrow 0$ so we can recover the Hom-sets from $0\rightarrow \Hom(Z,-)\rightarrow \Hom(P^{\oplus n},-)\rightarrow \Hom(P^{\oplus m},-)$, and injectivity is preserved by the four lemma.
\end{proof}
\subsubsection{Proof of equivalence} We bring everything together to finish the proof.
\begin{proof}[Proof of Proposition \ref{prop: c equivalence}]
Suppose we have a filtration $F_s\mathcal F_n$ of $\mathcal F\in \text{FPS}(\mathcal V)$. By Proposition \ref{prop: sheaves and chains}, in each algebraic degree $n$ we can view the diagram $\{\cdots \hookrightarrow F_0\mathcal F_n\hookrightarrow F_1\mathcal F_n\hookrightarrow \cdots \}$ as $\mathcal G_n\in \Perv(\Sym^n(\C),S_n,\mathcal W)$. Since this commutes with $\Phi_{\tot}$, we see that $\Phi_{\tot}(\mathcal G_n)=\{\cdots \hookrightarrow \Phi_{\tot}F_0\mathcal F_n\hookrightarrow \Phi_{\tot}F_1\mathcal F_n\hookrightarrow\cdots \}$ is free over $k[t]$ because $\Phi_{\tot}$ is exact.

Next, we check the factorizability of $\mathcal G=(\mathcal G_n)$. By associativity it suffices to check for a product of two sheaves that the map
\begin{equation}\label{eqn: factorizable equation for G}
\mathcal G_n|_{\Sym^n(U)}\boxtimes^\mathbf L_{(U,V)} \mathcal G_m|_{\Sym^m(V)} \xrightarrow \sim a^*(\mathcal G_{n+m}|_{\Sym^n(U)\times \Sym^m(V)}) 
\end{equation}
is an isomorphism. Denote the LHS by $\mathcal K=\{\cdots\rightarrow \mathcal K_0\rightarrow \mathcal K_1\rightarrow \cdots\}$ and the RHS by $\mathcal L=\{\cdots\rightarrow \mathcal L_0\rightarrow \mathcal L_1\rightarrow \cdots\}$. By Lemma \ref{lem: free x free = free}, since both sheaves on the left are free, $\mathcal K$ is perverse and free with quotients given by 
$$\mathcal K_s/\mathcal K_{s-1}=\bigoplus_{u+v=s} \Gr_u \mathcal F_n|_{\Sym^n(U)}\boxtimes \Gr_v \mathcal F_m|_{\Sym^m(V)}.$$
On the other hand, we have 
$$\mathcal L_s/\mathcal L_{s-1}= a^*(\Gr_s \mathcal F_{n+m}|_{\Sym^n(U)\times \Sym^m(V)})$$
and these are isomorphic because we required $\mathcal F^{gr}$ to be a factorizable perverse sheaf in Definition \ref{def: filtration FPS}. Now, there is some $s\ll 0$ where $\mathcal K_s=\mathcal L_s=0$, and from there we can use the five lemma to conclude that $\mathcal K_s\xrightarrow\sim \mathcal L_s$ for all $s$. This is compatible with operadic compositions because $\mathcal F^{gr}$ is, and this concludes the proof of factorizability.

For the other direction, suppose we have $\mathcal G\in \text{FPS}(\mathcal W)$ with $\Phi_{\tot}(\mathcal G_n)$ free over $k[t]$ for all $n$. We first show that $\mathcal G_n$ is free for all $n$. Otherwise, take the smallest $n$ such that some $\mathcal G_n$ is not free. Note that we cannot have $n=0$, because in this case $\Sym^0(\C)$ is trivial so $\mathcal G_0=\Phi_{\tot}(\mathcal G_0)$ is free (in fact, by factorizability, we necessarily have $\mathcal G_0=k\{t\}$ for all $\mathcal G\in \text{FPS}(\mathcal W)$). Thus, we have $n>0$ and we consider the kernel $0\rightarrow K\rightarrow (\mathcal G_n)_{s-1}\rightarrow (\mathcal G_n)_s$ where $K\neq 0$ for some $s$. By factorizability, we have $a^*(\mathcal G_n|_{\Sym^p(U)\times \Sym^p(V)} )\cong \mathcal G_p|_{\Sym^p(U)}\boxtimes^\mathbf L_{(U,V)}\mathcal G_q|_{\Sym^q(V)}$ for $p+q=n$, and by minimality of $n$, the sheaves on the RHS are free and by Lemma \ref{lem: free x free = free} this implies that the LHS is free too. Since restricting to an open is $t$-exact, this means that $K$ has no support on any $\Sym^p(U)\times \Sym^p(V)$, in particular $K$ is concentrated on the diagonal $\{x_1=x_2=\cdots = x_n\}$. On the other hand, applying the t-exact functor $\Phi_{\tot}$ we get that $\Phi_{\tot}(K)=0$ because $\Phi_{\tot}(\mathcal G_n)_{s-1}\hookrightarrow \Phi_{\tot}(\mathcal G_n)_s$ as $\Phi_{\tot}\mathcal G_n$ is free by assumption. Combining these two facts gives us $K=0$ as the nearby cycles and vanishing cycles are both zero so the stalk at the diagonal is also zero.

Now, we can define the filtration on $\mathcal F$ by writing $\mathcal G_n=\{\cdots \hookrightarrow F_0\mathcal F_n\hookrightarrow F_1\mathcal F_n\hookrightarrow \cdots\}$ which is a finite diagram, so it satisfies the finiteness condition in Definition \ref{def: filtration FPS}. The fact that $\mathcal F^{gr}$ is factorizable follows directly from $\mathcal G$ being factorizable by simply taking the quotient of the degree $t^s$ term by the $t^{s-1}$ term in Equation \eqref{eqn: factorizable equation for G}.
\end{proof}
The first part of Theorem \ref{thm: equivalence of filtrations} then follows from Proposition \ref{prop: b equivalence}, \ref{prop: a equivalence} and \ref{prop: c equivalence}.
\subsection{Spectral sequences}\label{sec: spectral sequences} We are left with the claim of spectral sequences in Theorem \ref{thm: equivalence of filtrations}, which we prove here. From Proposition \ref{prop: b equivalence}, we have that if $B\in \text{PB}(\mathcal W)$ corresponds to $\mathcal G\in \text{FPS}(\mathcal W)$ under the equivalence stated there, then we have the isomorphism $(\mathcal G_n)_0\cong B_n(B)$. We note that the stratification is quasi-homogeneous, so $R\Gamma \mathcal G_n\cong (\mathcal G_n)_0$ just like at the end of Section \ref{sec: KS equivalence}.

Now, we transport $B$ and $\mathcal G$ to filtrations of $A\in \text{PB}(\mathcal V)$ and $\mathcal F\in \text{FPS}(\mathcal V)$ respectively using Proposition \ref{prop: a equivalence}, \ref{prop: c equivalence}. In the proofs above we constructed $B_n=\{\cdots \hookrightarrow  F_0A_n\hookrightarrow F_1A_n\hookrightarrow \cdots\}$ and $\mathcal G_n=\{\cdots \hookrightarrow F_0\mathcal F_n\hookrightarrow F_1\mathcal F_n\hookrightarrow \cdots \}$. With this, we see that the bar-complex $B_n(B)$ is simply the filtration of the bar-complex of $B_n(A)$ induced by the filtration on $A$. Likewise, $R\Gamma\mathcal G_n$ is simply the filtration on $R\Gamma \mathcal F_n$.

Hence, the filtrations on $B_n(A)$ and $R\Gamma \mathcal F_n$ are isomorphic, so their spectral sequences 
\begin{equation}\label{eqn: spectral sequences of filtration}
\begin{split}
E_1^{p,q}= R\Gamma^{p+q}\mathcal F^{gr}_n\cong R\Gamma^{p+q}(\Gr_{p}\mathcal F_n)&\Rightarrow R\Gamma^{p+q}\mathcal F_n\\
E_1^{p,q} = \Tor^{A^{gr}}_{-(p+q),n}(k,k)\cong H^{p+q}(\Gr_{p} B_n(A))&\Rightarrow H^{p+q}(B_n(A))\cong \Tor^A_{-(p+q),n}(k,k)
\end{split}
\end{equation}
are isomorphic. This finishes the proof of Theorem \ref{thm: equivalence of filtrations}.

Finally, we prove an additional compatibility result between two different filtrations, which we will need later for the proof of Proposition \ref{prop: equiv weight filtration}. Let $F_s A$ and $G_s A$ be two filtrations on $A\in \text{PB}(\mathcal V)$ which correspond to the filtrations $F_s\mathcal F$ and $G_s\mathcal F$ on $\mathcal F\in \text{FPS}(\mathcal V)$ respectively. Consider the bi-filtrations $H_{u,v}A_n=F_uA_n\cap G_vA_n$ and $H_{u,v}\mathcal F_n=F_u\mathcal F_n\cap G_v\mathcal F_n$, where the second intersection is taken in the abelian category of perverse sheaves. Then, the following proposition generalizes our previous discussion to the case of bi-filtrations.
\begin{proposition}\label{prop: bi-filtrations}
The induced bi-filtration $H_{u,v}B_n(A)$ on the bar-complex is isomorphic to the bi-filtration $R\Gamma H_{u,v}\mathcal F_n$.
\end{proposition}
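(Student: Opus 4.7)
The plan is to extend the entire framework of Section \ref{sec: equivalence of filtrations} from single filtrations to bi-filtrations by replacing the coefficient ring $k[t]$ by $k[t_1,t_2]$. Concretely, I would introduce a braided monoidal abelian category $\mathcal W^{(2)}$ whose objects are finite $\mathbb{Z}^2$-indexed commutative diagrams $\{M_{u,v}\}$ in $\mathcal V$ --- equivalently, finitely generated bi-graded $k[t_1,t_2]$-modules valued in $\mathcal V$. Its tensor product is $\otimes_{k[t_1,t_2]}$, with degree $(u,v)$ term given by a two-variable analogue of Equation \eqref{eqn: degree n term of tensor product over k[t]}, its unit is the constant diagram in bi-degrees $\ge (0,0)$, and the braiding extends that of $\mathcal V$ in the manner of Section \ref{sec: braided monoidal abelian category}. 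Call an object $M \in \mathcal W^{(2)}$ \emph{bi-free} if both horizontal and vertical transition maps are monomorphisms, so that $M$ realizes a bi-filtration of its stabilized limit.

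I would then establish the three analogues of the equivalences in diagram \eqref{eqn: diagram for equivalence proof} with $\mathcal W$ replaced by $\mathcal W^{(2)}$ and ``free over $k[t]$'' replaced by bi-freeness. The outer equivalences corresponding to Propositions \ref{prop: a equivalence} and \ref{prop: c equivalence} go through almost verbatim, since bi-freeness is exactly the condition that each of the two filtrations $F, G$ consists of monomorphisms, and the intersection $H_{u,v} = F_u \cap G_v$ taken in $\mathcal V$ (or in the abelian category of perverse sheaves) matches the $(u,v)$-component of the corresponding diagram in $\mathcal W^{(2)}$. Factorizability is checked one bi-graded piece at a time using the bi-graded refinement of Lemma \ref{lem: free x free = free}.

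The main obstacle is the middle equivalence, the analogue of Proposition \ref{prop: b equivalence}. The proof of that proposition crucially used that $k[t]$ has Tor-amplitude one, so that derived tensor products of free objects remain concentrated in perverse degree zero; for $k[t_1,t_2]$ the Tor-amplitude jumps to two. The remedy is to replace the length-one resolution of Section \ref{sec: factorizable perverse sheaves over W} by the length-two Koszul-type resolution built from shifted copies of the stabilized limit, and to prove a bi-graded refinement of Lemmas \ref{lem: exterior product perverse} and \ref{lem: free x free = free} asserting that the derived exterior tensor product of two bi-free perverse sheaves in $\mathcal W^{(2)}$ remains bi-free (in particular perverse), with graded pieces given by the naive exterior tensor product of the graded pieces. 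As in Lemma \ref{lem: free x free = free}, this reduces via the Deligne tensor product to a statement about bi-graded abelian groups, which follows by inspecting the lexicographically smallest bi-degree at which a potential kernel element could appear.

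With these analogues in hand, the proposition follows immediately. The bi-filtration $H_{u,v}A_n$ corresponds to a bi-free $B \in \text{PB}(\mathcal W^{(2)})$, which under the extended Kapranov-Schechtman equivalence corresponds to a bi-free $\mathcal G \in \text{FPS}(\mathcal W^{(2)})$; functoriality together with compatibility with the two single-filtration constructions of Section \ref{sec: spectral sequences} forces the $(u,v)$-component of $\mathcal G_n$ to be $H_{u,v}\mathcal F_n$. The bi-graded analogue of Proposition \ref{prop: equivalence properties}(d) then gives $R\Gamma \mathcal G_n \cong B_n(B)$ as objects of $\mathcal W^{(2)}$, and extracting the $(u,v)$-component yields the desired isomorphism $H_{u,v}B_n(A) \cong R\Gamma H_{u,v}\mathcal F_n$.
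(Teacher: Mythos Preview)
Your overall strategy---pass to a two-variable coefficient category $\mathcal W^{(2)}$ built from $k[t_1,t_2]$, and rerun the three equivalences of diagram \eqref{eqn: diagram for equivalence proof}---is exactly what the paper does. The gap is in your notion of ``bi-free.'' You define bi-free to mean that the horizontal and vertical transition maps are all monomorphisms; this is strictly weaker than being free over $k[t_1,t_2]$. For instance, the ideal $(t_1,t_2)\subset k[t_1,t_2]$ is bi-free in your sense but not free, and one checks directly that $(t_1,t_2)\otimes_{k[t_1,t_2]}(t_1,t_2)$ contains the nonzero element $t_1\otimes t_2 - t_2\otimes t_1$ which is annihilated by both $t_1$ and $t_2$. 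So bi-free $\otimes$ bi-free is not bi-free, your proposed refinement of Lemma \ref{lem: free x free = free} is false, and the inductive step for iterated tensor products in the Cousin construction breaks: already $((t_1,t_2)\otimes(t_1,t_2))\otimes^{\mathbf L}(t_1,t_2)$ has nonvanishing $\Tor_1$.

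What the paper supplies, and what your proposal is missing, is the observation that the specific bi-filtered modules arising here---namely $H_{u,v}A_n = F_uA_n\cap G_vA_n$ for two filtrations on a finite-dimensional vector space---are in fact \emph{free} over $k[t_1,t_2]$, not merely bi-free. This is a nontrivial linear-algebra fact proved via the Bruhat decomposition $P_1\backslash \GL(V)/P_2 = \bigsqcup_w P_1wP_2$: any pair of flags can be put in relative position given by a permutation, which produces a basis simultaneously adapted to both filtrations and hence a free generating set for the associated $k[t_1,t_2]$-module. (The paper remarks that this is exactly what fails for three filtrations.) Once you have genuine freeness, the Tor-amplitude of $k[t_1,t_2]$ is irrelevant---derived tensor products of free modules are underived---and the analogue of Proposition \ref{prop: b equivalence} goes through verbatim, with no need for your Koszul-resolution workaround.
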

\begin{proof}
We prove this by adapting the arguments from this section to the bi-filtered setting, replacing $k[t]$-modules with $k[x,y]$-modules and $\Z$-indexed diagrams with $\Z^2$-indexed diagrams. Just like how a filtration on a finite $k$-vector space can be viewed as a graded $k[t]$-module, a bi-filtration on a finite $k$-vector space can be viewed as a graded $k[x,y]$-module. The key fact we need is that this $k[x,y]$-module is free. This is not true for tri-filtrations, for example, consider a two-dimensional vector space and three different length two filtrations on it.

We prove freeness as follows. Let $V$ be a vector space with basis $e_1,\ldots ,e_n$, $G=\GL(V)$, and $H_{u,v}V=F_uV\cap G_vV$ be a bi-filtration on $V$. We assume for simplicity that $0=F_0V\subsetneq F_1V\subsetneq \cdots \subsetneq F_nV=V$ and $0=G_0V\subsetneq G_1V\subsetneq \cdots \subsetneq G_mV=V$, and now we can view these as flags $g_1\in G/P_1$ and $g_2\in G/P_2$ in the respective flag varieties. To be precise, $P_1$ and $P_2$ are standard parabolic subgroups which stabilize the standard flags with subspaces of the correct dimension, where a standard flag is defined such that a $d$-dimensional subspace has basis $e_1,\ldots, e_d$. Then, the filtrations $F_uV$ and $G_vV$ are obtained respectively by applying $g_1$ and $g_2$ to the standard flags. Using the isomorphism $G/P_1\times G/P_2\cong P_1\backslash G /P_2$ which sends $(g_1,g_2)\mapsto g_1^{-1}g_2$, we see that such a bi-filtration corresponds to a double coset. However, by the Bruhat decomposition for parabolic subgroups, we can write $P_1\backslash G/P_2$ as the disjoint union of $P_1wP_2$ over permutation matrices $w$. In particular $g_1^{-1}g_2\in P_1wP_2$ for some $w$, and unwinding this we see that there is a basis $f_i$ (obtained by $g_1$ acting on $e_i$) such that $F_uV$ is a standard flag in this basis while $G_vV$ is a standard flag in the basis $f_{w^{-1}(i)}$. Then, it is clear that the graded $k[x,y]$-module corresponding to the bi-filtration $H_{u,v}V$ is free with generators $f_i$ placed in degree $(u,v)$ where $u,v$ are the smallest integers such that $f_i\in H_{u,v}V$.

Going back to our situation, let $\mathcal W'$ be the braided monoidal abelian category with objects $\Z^2$-indexed finite diagrams which are finitely generated when viewed as $k[x,y]$-modules. From the bi-filtration $H_{u,v}A_n$, we define $B'\in \text{PB}(\mathcal W')$ in basically the same way as the proof of Proposition \ref{prop: a equivalence} by letting $n$-th graded part $B_n'$ be the $\Z^2$-indexed diagram with entries $H_{u,v}A_n$. We check that $B'$ is indeed a primitive bialgebra in the same way, and moreover it is free over $k[x,y]$ by our discussion above. On the other hand, we can define $\mathcal G'\in \text{FPS}(\mathcal W')$ in the same way as the proof of Proposition \ref{prop: c equivalence} by letting $\mathcal G'_n$ be the $\Z^2$-indexed diagram with entries $H_{u,v}\mathcal F_n$. By adapting the proof there, we see that factorizability follows analogously. As $\Phi_{\tot}$ is exact, it commutes with intersections, so $\Phi_{\tot}(\mathcal G_n')$ is free over $k[x,y]$. The analogue of Proposition \ref{prop: b equivalence} for $\mathcal W'$ and $k[x,y]$ holds, and it is easy to see that $B'$ corresponds to $\mathcal G'$, so by the latter part of the statement $R\Gamma \mathcal G_n$ is isomorphic to $B_n(B')$, which tells us that the bi-filtrations are the same as desired.
\end{proof}

\section{Equivalence of word length and codimension filtration}\label{sec: equivalence of word length and codim filtration}
In this section, we show that the shifted word length filtration and codimension filtration correspond under the equivalence in Theorem \ref{thm: equivalence of filtrations}. Let $\mathcal V$ be a braided monoidal abelian category of $k$-vector spaces, and $V\in \mathcal V$ a braided vector space. Let $j\colon \Sym_{\neq}(\C)\hookrightarrow \Sym(\C)$ and $j_n\colon \Sym^n_{\neq}(\C)\hookrightarrow \Sym^n(\C)$. Recall from Proposition \ref{prop: equivalence properties}(b) that $\mathfrak A=\mathfrak A(V)\in \text{PB}(\mathcal V)$ corresponds to $j_*\mathcal L\in \text{FPS}(\mathcal V)$ where $\mathcal L=\mathcal L(V[1])$ was defined in Example \ref{eg: compatible local systems}. Proposition \ref{prop: equivalence properties}(e) tells us that the dual tensor algebra $T=T(V^*)\in \text{PB}(\mathcal V)$ corresponds to the dual sheaf $\D j_*\mathcal L=j_!\mathcal L^\vee\in \text{FPS}(\mathcal V)$. Specializing Theorem \ref{thm: equivalence of filtrations} to this setting, we see that filtrations on $\mathfrak A$ correspond to filtrations on $j_*\mathcal L$ and filtrations on $T$ corresponds to filtrations on $j_!\mathcal L^\vee$. 

Recall from the introduction that we defined the increasing shifted word length filtration to be $$F_c\mathfrak A_n=((\mathfrak A_{>0})^{n-c})_n$$ for $0\le c< n$, with $F_{-1}\mathfrak A_n=\{0\}$ and $F_n\mathfrak A_n=\mathfrak A_n$. In other words, $F_c\mathfrak A_n$ has elements in algebraic grading $n$ that can be expressed as a linear combination of the shuffle product of $n-c$ words. Note that $F_0\mathfrak A$ is simply the subalgebra generated in degree $1$ which is the Nichols algebra $\mathfrak B=\mathfrak B(V)$, which is one reason why we chose to use the shifted filtration. 

We need to check that this is a filtration in the sense of Definition \ref{def: finite filtration PB}. It clearly satisfies the finiteness conditions, and the multiplication respects the grading by definition. Furthermore, \cite[Section 4.2]{ETW17} gives an argument for why comultiplication respects the unshifted filtration, and although they work in the context of Yetter-Drinfeld modules, we note that we can remove this assumption in their proof because the braiding always sends $\mathfrak A_{>0}\otimes \mathfrak A$ to $\mathfrak A\otimes \mathfrak A_{>0}$. From this it is clear that this implies that comultiplication respects the shifted filtration as well.

On the geometric side, we define the codimension filtration as follows. Let $\mathcal F=j_*\mathcal L$ and $\mathcal F_n=j_{n*}\mathcal L_n$. Now, fix some $n$, and let $V_c$ be the union of all codimension $\le c$ (i.e. dimension $\ge n-c$) strata in the stratificaton $S_n$ of $\Sym^n(\C)$. For example, we have $V_0=\Sym^n_{\neq}(\C)$ and $V_n=\Sym^n(\C)$. Let $u_c\colon \Sym^n_{\neq}(\C)\hookrightarrow V_c$ and $v_c\colon V_c\hookrightarrow \Sym^n(\C)$. We want the filtered piece $F_c\mathcal F_n$ to come as middle extensions from codimension $\le c$ strata, so we define
$$F_c\mathcal F_n=v_{c!*}u_{c*}\mathcal L_n,$$
for $0\le c\le n$ and by the property of middle extensions this is exactly the subobject of $j_{n*}\mathcal L_n$ consisting of all composition factors that are supported on $V_c$. We set $F_{-1}\mathcal F_n=0$, and note that $F_0\mathcal F_n=j_{n!*}\mathcal L_n$ and $F_n\mathcal F_n=\mathcal F_n$. Hence, the degree $0$ part of $j_*\mathcal L$ is exactly $j_{!*}\mathcal L$, analogously to the situation for algebras above which we expect because $j_{!*}\mathcal L$ corresponds to $\mathfrak B$.

We check that this is a filtration in the sense of Definition \ref{def: filtration FPS}, the finiteness condition is clear and the sheaves are clearly perverse, so it suffices to check that $\mathcal F^{gr}$ is factorizable. By factorizability of $\mathcal F$, we have 
$$\mathcal F_n|_{\Sym^n(U)}\boxtimes^\mathbf L_{(U,V)} \mathcal F_m|_{\Sym^m(V)} \xrightarrow \sim a^*(\mathcal F_{n+m}|_{\Sym^n(U)\times \Sym^m(V)})$$
and it suffices to check that this induces an isomorphism of subquotients
\begin{equation}\label{eqn: factorizability check}
\bigoplus_{u+v=s}\mathcal \Gr_u\mathcal F_n|_{\Sym^n(U)}\boxtimes_{(U,V)} \Gr_v \mathcal F_m|_{\Sym^m(V)} \xrightarrow \sim a^*(\Gr_{u+v}\mathcal F_{n+m}|_{\Sym^n(U)\times \Sym^m(V)}).
\end{equation}
By Lemma \ref{lem: free x free = free}, the LHS is a graded piece of a filtration on $\mathcal F_n|_{\Sym^n(U)}\boxtimes_{(U,V)} \mathcal F_m|_{\Sym^m(V)}$. We check that the composition factors on both sides are the same. But this is true since the graded pieces $\Gr_u\mathcal F_n$ consist of exactly those composition factors which are middle extensions from codimension $u$ strata, and similarly for $\Gr_v\mathcal F_n$ and $\Gr_{u+v}\mathcal F_{n+m}$. Since the exterior tensor product commutes with middle extensions, the LHS has all composition factors which are middle extensions from codimension $u+v$, which is exactly the RHS. From this, we can conclude that they are isomorphic. 
\subsection{Proof of equivalence} We prove that the two filtrations defined above correspond.
\begin{proposition}\label{prop: word length codim equivalence}
The shifted word length filtration $F_c\mathfrak A_n$ corresponds to the codimension filtration $F_c\mathcal F_n$ under the equivalence of Theorem \ref{thm: equivalence of filtrations}.
\end{proposition}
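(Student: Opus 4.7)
I would prove the proposition by induction on $n$. The extreme cases are immediate: $c = 0$ reduces to Proposition 2.15(b), which matches $\mathfrak{B} = F_0 \mathfrak{A}$ with $j_{!*}\mathcal{L} = F_0 \mathcal{F}_n$; the case $c \ge n$ has both filtrations equal to the full object. For the inductive step with $0 < c < n$, apply Theorem 1.4 to the shifted word length filtration to obtain some filtration $F'_c \mathcal{F}$, and the goal is to show $F'_c \mathcal{F} = F_c \mathcal{F}$ (the codimension filtration).

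The strategy is to split the comparison into behavior away from and at the deepest stratum $S_{(n)} \subset \Sym^n(\C)$. On the open complement $\Sym^n(\C) \setminus S_{(n)}$, every point lies in some $\Sym^{\lambda_1}(U_1) \times \cdots \times \Sym^{\lambda_r}(U_r)$ with $r \ge 2$ and each $\lambda_i < n$. Factorizability then identifies the restriction of both filtrations with the exterior product of filtrations on the $\mathcal{F}_{\lambda_i}$, which by induction and the matching combinatorial rule (codimension $n - r$ corresponds to an $r$-fold shuffle product with $r \ge n - c$) agree on both sides.

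The remaining question is the filtration at a point of $S_{(n)}$; by translation symmetry it suffices to work at the origin, where $\Phi_{\tot}(\mathcal{F}_n) \cong \mathfrak{A}_n$ by Proposition 2.15(c). Since $\Phi_{\tot}$ is exact, $\Phi_{\tot}(F_c \mathcal{F}_n) \hookrightarrow \mathfrak{A}_n$ is a subobject that one must identify with $F_c \mathfrak{A}_n$. I would do this by inspecting the graded pieces $\Gr_c$ on both sides: the geometric graded piece $\Gr_c \mathcal{F}_n$ is a direct sum of middle extensions from codimension-$c$ strata $S_\lambda$, one for each partition $\lambda$ with $n - c$ parts, and by factorizability together with the $c=0$ identification $j_{!*}\mathcal{L}_{\lambda_i} \leftrightarrow \mathfrak{B}_{\lambda_i}$, each such middle extension corresponds under Theorem 1.4 to the image of $\mathfrak{B}_{\lambda_1} \otimes \cdots \otimes \mathfrak{B}_{\lambda_{n-c}} \to \mathfrak{A}_n$; summing over $\lambda$ recovers $\Gr_c \mathfrak{A}_n = ((\mathfrak{A}_{>0})^{n-c}/(\mathfrak{A}_{>0})^{n-c+1})_n$.

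The main obstacle is the vanishing-cycles step at the origin: middle extensions and $\Phi_{\tot}$ do not visibly commute, and the Cousin complex of Section 2.3.3 is a resolution rather than a filtration, so tracking how the geometric truncation along $V_c \hookrightarrow \Sym^n(\C)$ descends to the subspace $F_c \mathfrak{A}_n \subset \mathfrak{A}_n$ requires care. A cleaner alternative would be to lift both filtrations to $\mathcal{W}$ via Propositions 3.5 and 3.7, producing $B \in \text{PB}(\mathcal{W})$ and $\mathcal{G} \in \text{FPS}(\mathcal{W})$, and then verify $L(B) = \mathcal{G}$ directly using the Cousin construction in the free-$k[t]$ setting of Proposition 3.6 — the finite-filtration condition makes $B$ free over $k[t]$, so the construction is well behaved and can be matched stratum-by-stratum with the codimension filtration, using the inductive hypothesis on $n$ to handle non-trivial partitions and the $c=0$ base case to anchor the deepest stratum.
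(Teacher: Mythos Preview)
Your induction-on-$n$ skeleton is right, and the factorizability reduction away from the diagonal is fine. The gap is exactly where you locate it: at the deepest stratum you try to match graded pieces via $\Phi_{\tot}$, and you correctly note that $\Phi_{\tot}$ does not interact cleanly with middle extensions. Your proposed fix --- matching each $\Gr_c\mathcal F_n$ with the image of $\mathfrak B_{\lambda_1}\otimes\cdots\otimes\mathfrak B_{\lambda_{n-c}}\to\mathfrak A_n$ --- does not go through as stated: Theorem~1.4 is an equivalence of \emph{filtrations}, not a correspondence between individual simple perverse sheaves and subspaces of $\mathfrak A_n$, so there is no mechanism to attach a single middle extension to a specific piece of $\mathfrak A_n$. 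Your alternative of lifting to $\mathcal W$ and checking $L(B)=\mathcal G$ is circular: that lift \emph{is} the content of Theorem~1.4, so you would still need an independent computation at the diagonal.

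The paper's proof runs the comparison in the opposite direction and uses a different invariant at the origin. It transports the codimension filtration to the algebra side to get some $F'_c\mathfrak A_n$, and then uses the isomorphism of filtered \emph{bar-complexes} from Section~3.5 (equivalently Proposition~2.15(d): the stalk of $\mathcal F_n$ at the origin is $B_n(\mathfrak A)$). In that bar-complex, $F'_c\mathfrak A_n$ sits in degree $-1$ as the target of multiplication from $\bigoplus_{p+q=n,\ 0<p,q<n} F_c(\mathfrak A_p\otimes\mathfrak A_q)$, with the lower-degree pieces already identified by induction. The key geometric input is that for $c<n$ the sheaf $F_c\mathcal F_n$ is a middle extension from an open not containing the diagonal, so ${}^p\mathcal H^0 i^*F_c\mathcal F_n=0$; this forces $H^{-1}$ of the filtered bar-complex to vanish, i.e.\ the multiplication map onto $F'_c\mathfrak A_n$ is surjective. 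Hence $F'_c\mathfrak A_n$ is exactly the image of the shuffle product of $n-c$ positive-degree words, which is $F_c\mathfrak A_n$ by definition. This bypasses $\Phi_{\tot}$ entirely and replaces your graded-piece matching with a single vanishing statement for middle extensions.
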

\begin{proof}
Let $F'_c\mathfrak A_n$ be the filtration that corresponds to the codimension filtration under Theorem \ref{thm: equivalence of filtrations}, and we show that $F'_c\mathfrak A_n=F_c\mathfrak A_n$. We prove this by induction on $n$. The base case of $n=0$ is trivial because for the associated graded to be a primitive algebra as in Definition \ref{def: finite filtration PB}, the unit must be in filtration degree $0$. 

Now suppose that both filtrations are equal in algebraic degree less than $n$ for some $n>0$. Recall from Section \ref{sec: spectral sequences} that the filtration induced on the bar-complex $B_n(\mathfrak A)$ and the filtration on the stalk $i_0^*\mathcal F_n$ are isomorphic, where $i_0\colon \{0\}\hookrightarrow \Sym^n(\C)$ is the inclusion of the origin. We write $F'_cB_n(\mathfrak A)\cong i_0^*F_c\mathcal F_n$ as the complex
$$\left\{\cdots \rightarrow \bigoplus_{\substack{p+q+r=n\\ 0<p,q,r<n}}F_c(\mathfrak A_p\otimes\mathfrak A_q \otimes \mathfrak A_r)\rightarrow \bigoplus_{\substack{p+q=n\\ 0<p,q<n}}F_c(\mathfrak A_p\otimes\mathfrak A_q )\rightarrow F_c'\mathfrak A_n\right\}$$
with $F'_c\mathfrak A_n$ in degree $-1$, where we note that by our induction hypothesis we have $F_c'(\mathfrak A_p\otimes \mathfrak A_q) = F_c(\mathfrak A_p\otimes \mathfrak A_q)$ in the direct sum because $p,q<n$, and likewise for $F_c'(\mathfrak A_p\otimes \mathfrak A_q\otimes \mathfrak A_r)$ and so on.

First consider the case where $c<n$. We claim that cohomology of this complex in degree $-1$ vanishes, i.e. we have $H^{-1}i_0^*F_c\mathcal F_n=0$. Let $i\colon \C\hookrightarrow \Sym^n(\C)$ be the inclusion of the diagonal, then by translation invariance it suffices to show that $\mathcal H^{-1}i^*F_c\mathcal F_n={}^p\mathcal H^0i^*F_c\mathcal F_n=0$. Here, we have a shift in degree by $1$ due to the fact that the perverse t-structure on the diagonal (which has dimension $1$) is the usual t-structure shifted by $1$. But this is true because $F_c\mathcal F_n$ is a middle extension from the open $V_c$ which does not contain the diagonal, and it is a property of middle extensions that the degree $0$ perverse cohomology of the stalk of such a middle extension is zero, see \cite[Lemma III.5.1(b)]{Kiehl_Weissauer_2010}.

Hence, we conclude that the map to $F_c'\mathfrak A_n$ must be surjective. Since the map is induced by the shuffle product by definition of the bar-complex, we conclude that $F_c'(\mathfrak A_n)$ is exactly the image under the shuffle product of $\bigoplus F_c(\mathfrak A_p\otimes \mathfrak A_q)$ with the direct sum being over $p+q=n$ and $p,q>0$. But these are exactly the linear combinations of the product of $p+q-c=n-c$ words, so $F'_c\mathfrak A_n=F_c\mathfrak A_n$ for $c<n$. 

For $c=n$, we have $F'_nB_n(\mathfrak A)\cong i_0^*F_n\mathcal F_n\cong i_0^*\mathcal F_n$, and by Proposition \ref{prop: equivalence properties} this is simply $B_n(\mathfrak A_n)$, so we conclude that $F_n'\mathfrak A_n=F_n\mathfrak A_n=\mathfrak A_n$.
\end{proof}
Let us try to generalize this correspondence as much as possible. Note that the codimension filtration comes directly from a filtration on the stratification of $\Sym(\C)$, where we assign the codimension $c$ stratum to be in the $c$ graded piece. We analyze all possible filtrations on the stratification of $\Sym(\C)$. Any such stratification is determined by a function $f$ that assigns partitions $\lambda = (\lambda_1\ge \cdots \ge \lambda_p)$ to integers $f(\lambda)$. From this, we can construct a geometric filtration on the perverse sheaf $j_*\mathcal L$ corresponding to $f$ analogously to the codimension filtration but now we let $V_c$ be the union of all strata $\lambda$ with $f(\lambda)\le c$. We call this the weighted codimension filtration associated to $f$. 

The function $f$ needs to satisfy some properties. Firstly, by factorizability we must have $f(\lambda \sqcup \lambda')=f(\lambda)+f(\lambda')$. In other words, $f$ is determined by its values on integers $f(n)$. Then, we need that $V_c$ is an open subscheme, which corresponds to having $f(n)+f(m)\le f(n+m)$. To normalize this to non-negative degrees, we can let $f(1)=0$ which forces $f(n)\ge 0$ for $n\ge 0$. The codimension filtration simply corresponds to the function $f(n)=n-1$.

Unsurprisingly, the corresponding algebraic filtration is the weighted (shifted) word length filtration corresponding to $f$, where the elements in $F_c\mathfrak A_n$ are linear combinations of shuffle product of words of length $\lambda_1,\ldots, \lambda_p$ where $\sum_{i=1}^p f(\lambda_i)\le c$. It is clear that the proof above generalizes to the equivalence of the weighted codimension filtration and weighted word length filtration.

The point of this mental exercise to show that if we just restrict ourselves to filtrations that come from the geometry of the stratification $\Sym^n(\A^1)$, we would not get very far. For example, we would not be able to obtain the weight filtration from such an argument.
\subsection{Examples} \label{sec: word filtration examples}
We describe the shifted word length filtration in the context of three quantum shuffle algebras. This will allow us to highlight how the word length filtration differs from the weight filtration later on in Section \ref{sec: weight examples}.

For the first example, consider the braided vector space $V=k$, where the braiding $R\colon V\otimes V\rightarrow V\otimes V$ is given by multiplication by $q\in k^\times$. Then, the quantum shuffle algebra $\mathfrak A(V)$ is isomorphic to the quantum divided power algebra $\Gamma_q[x]$ as discussed in \cite[Section 3.5]{ETW17} and \cite{Callegaro_2006}. The algebra $\Gamma_q[x]$ is defined to be additively generated by elements $x_n$ in degree $n$, with product given by
$$x_n\star x_m=\binom{n+m}{m}_q x_{n+m}$$
where the quantum binomial coefficient is defined as $\binom a b_q = \frac{[a]_q\cdots [a-b+1]_q}{[b]_q\cdots [1]_q}$ with $[r]_q=\frac{1-q^r}{1-q}=1+q+\cdots +q^{r-1}$. In the isomorphism above, $x_n$ corresponds to the word $1^{\otimes n}\in V^{\otimes n}=\mathfrak A_n$. 

An explicit ring structure of $\Gamma_q[x]$ is given in \cite[Proposition 3.12]{ETW17} and \cite[Lemma 3.4]{Callegaro_2006}, which we describe as follows. If $q$ is not a root of unity, we have $\Gamma_q[x]\cong k[x_1]$, and if $q$ is a primitive $m$-th root of unity, then $\Gamma_q[x]=k[x_1]/x_1^m\otimes \Gamma[x_m]$ where $\Gamma[x]=\Gamma_1[x]$ is the usual divided power algebra.

Hence, if $q$ is not a root of unity, then $\mathfrak A$ is generated in degree $1$, so the Nichols algebra $\mathfrak B$ is the whole algebra and the weight filtration is concentrated in degree zero with $F_0\mathfrak A=\mathfrak B\cong \mathfrak A$. If $q$ is a $m$-th root of unity, the element in degree $n=um+v$ for $0\le v<m$ can be expressed as the product of at most $u+v$ non-negative degree words, so it is in filtration degree $c=n-(u+v)=u(m-1)$. We plot this for $m=3$ in Figure \ref{fig1} below, and note that the filtration jumps by $m-1$ every $m$ algebraic degrees.
\begin{figure}[h]
\[
\begin{array}{c|ccccccccc}
 & \mathfrak A_0 & \mathfrak A_1 & \mathfrak A_2 & \mathfrak A_3 & \mathfrak A_4 & \mathfrak A_5 & \mathfrak A_6 & \mathfrak A_7 & \mathfrak A_8 \\
\hline
\Gr_0 & 1 & 1 & 1 &   &   &   &   &   &   \\
\Gr_1 &   &   &   &   &   &   &   &   &   \\
\Gr_2 &   &   &   & 1 & 1 & 1 &   &   &   \\
\Gr_3 &   &   &   &   &   &   &   &   &   \\
\Gr_4 &   &   &   &   &   &   & 1 & 1 & 1 
\end{array}
\]
\caption{Dimensions of shifted word length filtration for $V=k$ and $q$ third root of unity.}\label{fig1}
\end{figure}

For our second example, we consider the usual shuffle algebra where $V$ has basis $v_1,\ldots, v_d$ and $R\colon V\otimes V\rightarrow V\otimes V$ is given simply by swapping the basis $v_i\otimes v_j\mapsto v_j\otimes v_i$. It is clear that the Nichols subalgebra $\mathfrak B(V)$ is $\Sym(V)$, as the shuffle product $x_1\star\cdots \star x_n$ of degree $1$ elements is simply the sum of the $n!$ words which are permutations of $x_1,\ldots, x_n$. This tells us what the first filtered piece $F_0\mathfrak A$ is, but to get a clearer picture of the rest of the filtration, we need to turn to the dual tensor algebra. Recall that the dual filtration on $T=T(V^*)$ is the decreasing filtration defined by $F^{c+1}T_n = (F_c\mathfrak A_n)^\perp$
so that $\dim (\Gr^c T_n)=\dim(\Gr_c\mathfrak A_n)$. We attempt to understand the smallest nonzero filtered piece $F^{n-1}T_n=(F_{n-2}\mathfrak A_n)^\perp=(((\mathfrak A_{>0})^2)_n)^\perp$. In other words, $z\in F^{n-1}T_n$ if and only if for any $x,y\in \mathfrak A_{>0}$ we have $(x\star y,z)=0$. But $(x\star y,z)=(x\otimes y, \Delta_\star(z))$ by duality, so this happens if and only if $z$ is primitive. 

Let $P=P(T(V^*))\subseteq T$ be the subspace of primitive elements so $F^{n-1}T_n=P_n$. As we are dealing with usual non-braided Hopf algebras, $P$ has the structure of a Lie algebra with the commutator $[x,y]=xy-yx$. Furthermore, in our tensor algebra case, \cite[Definition 6.18]{Milnor_Moore_1965} tells us that $P(T(V^*))\cong L(V^*)$ is the free Lie algebra generated by $V^*$. By the Milnor-Moore theorem \cite{Milnor_Moore_1965}, the natural map $U(P)\rightarrow T$ from the universal enveloping algebra of $P$ is an isomorphism. Recall that there is a natural Poincaré–Birkhoff–Witt (PBW) filtration on the universal enveloping algebra $U(P)$ where $F_sU(P)$ consists of products of at most $s$ elements of $P$, this gives an increasing filtration $F_sT=F_sU(P)$ on the tensor algebra. One might suspect that this agrees with the shifted word length filtration, and this is indeed the case.
\begin{proposition}\label{prop: PBW word length agree}
The shifted word length filtration and the PBW filtration are related by $F^cT_n=F_{n-c}T_n$.
\end{proposition}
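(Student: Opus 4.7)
The plan is to translate both filtrations into kernels of iterated reduced coproducts on $T$, and then invoke the classical identification of the PBW and coradical filtrations on the universal enveloping algebra of a Lie algebra. In this section the braiding on $V$ is the standard swap, so $T = T(V^*)$ is a cocommutative connected graded Hopf algebra with the unshuffle coproduct $\Delta_\star$, and the Milnor--Moore theorem identifies $T$ with $U(P)$ where $P = L(V^*)$.

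First I would translate the shifted word length filtration under duality. Since $F_c \mathfrak{A}_n = ((\mathfrak{A}_{>0})^{n-c})_n$ is spanned by shuffle products $x_1 \star \cdots \star x_m$ with $m \geq n-c$ and each $x_i \in \mathfrak{A}_{>0}$, and $\star$ is dual to $\Delta_\star$, the pairing identity $(x_1 \star \cdots \star x_m, z) = (x_1 \otimes \cdots \otimes x_m, \overline{\Delta}_m(z))$ (with $\overline{\Delta}_m$ the $m$-fold reduced iterated coproduct landing in $(T_{>0})^{\otimes m}$) shows that $z \in F^{c+1}T_n = (F_c\mathfrak{A}_n)^\perp$ iff $\overline{\Delta}_m(z) = 0$ for every $m \geq n-c$. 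The observation that a degree-$0$ factor in $\Delta_m(z)$ persists under further coproducts implies vanishing of $\overline{\Delta}_{m_0}(z)$ forces vanishing of $\overline{\Delta}_m(z)$ for all $m \geq m_0$, so the condition collapses to $\overline{\Delta}_{n-c}(z) = 0$. Setting $s = n-c$, the proposition reduces to
\[
F_s T_n \;=\; \ker\bigl(\overline{\Delta}_{s+1}\colon T_n \to (T_{>0})^{\otimes(s+1)}\bigr).
\]

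The inclusion $F_s T_n \subseteq \ker \overline{\Delta}_{s+1}$ is a direct combinatorial computation: for $z = p_1 \cdots p_k$ with $p_i$ primitive, multiplicativity of $\Delta_\star$ together with $\Delta_\star(p_i) = p_i \otimes 1 + 1 \otimes p_i$ expresses $\overline{\Delta}_{s+1}(z)$ as a sum indexed by ordered set partitions of $\{1,\ldots,k\}$ into $s+1$ non-empty blocks, which is empty whenever $k \leq s$. The reverse inclusion is the crux, and is the standard identification of the coradical filtration with the PBW filtration on $U(P)$. I would argue by induction on $s$: given $\overline{\Delta}_{s+1}(z) = 0$, coassociativity places $\overline{\Delta}(z) \in T_{>0} \otimes \ker \overline{\Delta}_s = T_{>0} \otimes F_{s-1}T$ by the inductive hypothesis, cocommutativity yields the symmetric statement so that $\overline{\Delta}(z) \in \sum_{a+b = s-1} F_a T \otimes F_b T$, and then a PBW basis computation produces an explicit $z_0 \in F_s T_n$ with $\overline{\Delta}(z-z_0) = 0$, so $z - z_0 \in P \subseteq F_1 T$, completing the induction. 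Alternatively, one can appeal to the characteristic-zero symmetrization map $\Sym(P) \xrightarrow{\sim} U(P)$, an isomorphism of coalgebras carrying $\Sym^{\leq s}P$ onto $F_s U(P)$, which makes the statement immediate on the $\Sym(P)$ side. The main obstacle is precisely this reverse inclusion: it is where the PBW theorem enters to prevent unexpected relations among products of primitives from enlarging $\ker \overline{\Delta}_{s+1}$ past $F_s T$.
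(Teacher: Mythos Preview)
Your reduction via duality to $F_s T_n = \ker \overline{\Delta}_{s+1}$ and the forward inclusion $F_s T_n \subseteq \ker \overline{\Delta}_{s+1}$ match the paper's argument exactly. For the reverse inclusion the paper takes a more direct route: write $z$ in the PBW basis as a sum of ordered monomials $P_1 \cdots P_t$ with $P_1 \le \cdots \le P_t$; if some term has $t > s$, then $\Delta_\star^s(z)$ contains the specific tensor $P_1 \otimes \cdots \otimes P_s \otimes (P_{s+1} \cdots P_t)$, and the ordering guarantees no other PBW monomial can contribute this tensor, so it survives and $\Delta_\star^s(z) \notin I$. This is entirely elementary and self-contained. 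Your alternative via the symmetrization coalgebra isomorphism $\Sym(P) \xrightarrow{\sim} U(P)$ is also correct and more conceptual, trading the explicit PBW combinatorics for an appeal to a standard structure theorem.

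Your inductive sketch, however, has a gap: the claim $\overline{\Delta}(z) \in \sum_{a+b=s-1} F_a T \otimes F_b T$ is false as stated. For $s=2$ it would force $\overline{\Delta}(z) \in F_1 T \otimes F_0 T + F_0 T \otimes F_1 T$, but $F_0 T = k$ sits in degree $0$ and the reduced coproduct has no degree-$0$ factors, so this would say $\overline{\Delta}(z)=0$, i.e.\ $z$ is primitive, which is too strong. The intended statement is presumably $\sum_{a+b=s,\,a,b\ge 1} F_a T \otimes F_b T$, but the passage from the two one-sided containments $\overline{\Delta}(z) \in T_{>0} \otimes F_{s-1}T$ and $\overline{\Delta}(z) \in F_{s-1}T \otimes T_{>0}$ to this refined form does not follow from cocommutativity alone; their intersection is only $F_{s-1}T_{>0} \otimes F_{s-1}T_{>0}$. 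Since you offer the symmetrization argument as an alternative, the overall proposal stands.
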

\begin{proof}
We first prove that $F_{n-c}T_n\subseteq F^cT_n$. Unwinding the definitions, it suffices to prove that the product (concatenation) of $s$ primitives $P_1\cdots P_s$ are orthogonal to all $x_1\star\cdots \star x_{s+1}$ for $x_i\in \mathfrak A_{>0}$. This is equivalent to showing $\Delta_\star^s(P_1\cdots P_s)\in I$, where we let $I=k\otimes T\otimes \cdots \otimes T+\cdots +T\otimes \cdots \otimes T\otimes k$. For the usual shuffle algebra, it is easy to see by induction that $\Delta_\star^{s}(P_1\cdots P_s)$ is a sum of terms corresponding to $s+1$ groupings of $P$. More precisely, the words are of the form $a_1\otimes \cdots \otimes a_{s+1}$ where $a_k=P_{i^k_1}\cdots P_{i^k_{b_k}}$ where $i^k_1<\cdots < i^k_{b_k}$ and the union of indices $i^k_j$ over all $j,k$ is $\{1,\ldots,s\}$. From this it is clear that $\Delta_\star^{s}(P_1\cdots P_s)\in I$ because some group must be empty.

Now, we show that $F_{n-c}T_n\supseteq F^cT_n$, which boils down to showing that an element $z\in T_n$ with $\Delta_\star^s(z)\in I$ is necessarily the sum of product of $\le s$ primitives. By PBW, we can write $z$ as the sum of terms $P_1\cdots P_t$ where $P_1\le \cdots \le P_t$ for some ordering of the basis of $P$. If $t>s$, $\Delta_\star^s(P_1\cdots P_t)$ contains a tensor product like $P_1\otimes \cdots \otimes P_s\otimes P_{s+1}\cdots P_t$ which does not lie in $I$. However, because we imposed the ordering $P_1\le\cdots \le P_t$, there are no other terms that can contribute this tensor product. Hence, $z$ is the sum of product of $\le s$ primitives.
\end{proof}

We plot the dimensions of the filtration for $\dim(V)=3$ in Figure \ref{fig2}. Note that $\dim(\Gr_0\mathfrak A_n)=\binom {n+2}2$ is the dimension of $\Sym^n(V)$ and the diagonal $\dim(\Gr_{n-1}\mathfrak A_n)=\dim(L(V^*)_n)$ is the number of Lyndon words of length $n$ over an alphabet of size $\dim(V)$.
\begin{figure}[h]
\centering
\[
\begin{array}{c|cccccc}
 & \mathfrak A_0 & \mathfrak A_1 & \mathfrak A_2 & \mathfrak A_3 & \mathfrak A_4 & \mathfrak A_5 \\
\hline
\Gr_0 & 1 & 3 & 6 & 10 & 15  & 21    \\
\Gr_1 &   &   & 3  & 9  & 18  & 30    \\
\Gr_2 &   &   &   &  8& 30& 66   \\
\Gr_3 &   &   &   &   & 18  &  78   \\
\Gr_4 &   &   &   &   &   &  48  
\end{array}
\]
\caption{Dimensions of shifted word length filtration for the usual shuffle algebra with $\dim(V)=3$.}\label{fig2}
\end{figure}

For our third and last example, we consider the braided vector space $V(c,x)$ corresponding to the rack $c$ of transpositions in $G=S_3$. If the cocycle $x=-1$, this corresponds to the case of Hurwitz space $\Hur^{G,c}_n$. We computed the dimensions of the filtration for $x=1$ and $x=-1$ up to $n=5$, and plotted them in Figure \ref{fig3}. 
\begin{figure}[h]
\centering
\[
\begin{array}{c|cccccc}
 & \mathfrak A_0 & \mathfrak A_1 & \mathfrak A_2 & \mathfrak A_3 & \mathfrak A_4 & \mathfrak A_5 \\
\hline
\Gr_0 & 1 & 3 & 9 & 27 & 79  & 225    \\
\Gr_1 &   &   &   &   &   &     \\
\Gr_2 &   &   &   &  & &   \\
\Gr_3 &   &   &   &   & 2  & 12   \\
\Gr_4 &   &   &   &   &   &  6 
\end{array}
\qquad
\begin{array}{c|cccccc}
 & \mathfrak A_0 & \mathfrak A_1 & \mathfrak A_2 & \mathfrak A_3 & \mathfrak A_4 & \mathfrak A_5 \\
\hline
\Gr_0 & 1 & 3 & 4 & 3 & 1  &     \\
\Gr_1 &   &   & 5  & 21 & 42  & 51    \\
\Gr_2 &   &   &   &  3& 32& 132   \\
\Gr_3 &   &   &   &   & 6  &  51   \\
\Gr_4 &   &   &   &   &   &  9  
\end{array}
\]
\caption{Dimensions of shifted word length filtration for the $S_3$ transposition rack with cocycle $x=1$ on the left and $x=-1$ on the right.}\label{fig3}
\end{figure}

For $x=1$, we see that there are no primitive elements in $\mathfrak A_2,\mathfrak A_3$, but there is a $2$ dimensional space of primitive elements for $\mathfrak A_4$, and $6$ dimensional space for $\mathfrak A_5$, which leads to the gap in gradings $1$ and $2$. For $x=-1$, we notice that the Nichols algebra $\mathfrak B=F_0\mathfrak A$ is finite-dimensional and ends at $n=4$. This is the 12 dimensional Fomin-Kirillov algebra $\mathcal{FK}_3$ first defined in \cite{Fomin_Kirillov_1999}.
\section{Equivalence of weight 
filtrations}\label{sec: equivalence of weight filtrations}
We present another instance of the equivalence established in Theorem \ref{thm: equivalence of filtrations}, namely the case of the weight filtration. For this section, let $V\in \mathcal V$ be a braided vector space such that $\mathcal L_n(V[1])$ has \textit{finite monodromy}, this is required to define the geometric weight filtration. As usual, let $\mathfrak A=\mathfrak A(V)$ be the quantum shuffle algebra, $T=T(V^*)$ be the dual tensor algebra, and $\mathfrak B=\mathfrak B(V)$ be the Nichols algebra. 

First, we discuss the geometric weight filtration on $\mathcal F_n=j_{n*}\mathcal L_n$ and its dual $\D\mathcal F_n=j_{n!}\mathcal L_n^\vee$. We define the weight filtration $W_w\mathcal F_n$ using the construction in Section \ref{sec: weight filtration over C}, and here we use that the local systems have finite monodromy. This weight filtration $W_w\D\mathcal F_n$ can be defined in the same way using $j_{n!}\mathcal L_n^\vee$ in place of $j_{n*}\mathcal L_n$, and it is easy to see that this is dual to the weight filtration in the sense that $\D \Gr_w\mathcal F_n\cong \Gr_{-w}\D\mathcal F_n$ and $\D W_w\mathcal F_n \cong \D\mathcal F_n/W_{-w-1}\D\mathcal F_n$. The weight filtration on $\mathcal F_n$ and $\D\mathcal F_n$ are concentrated in weights $\ge 0$ and $\le 0$ respectively. Just like in Section \ref{sec: equivalence of word length and codim filtration}, we can check that both are filtrations of factorizable perverse sheaves in the sense of Definition \ref{def: filtration FPS}. Indeed, the LHS of Equation \eqref{eqn: factorizability check} contains exactly the composition factors with weight $u+v$ because $\boxtimes$ adds weight, and the same is true for the RHS.

Recall that we defined the increasing algebraic weight filtrations $W_w \mathfrak A_n$ and $W_wT_n$ in Definition \ref{defn: alg weight filtration}, and that the weights in $\mathfrak A_n$ and $T_n$ are concentrated in degrees $\ge 0$ and $\le 0$ respectively. We were unable to show that this is a bialgebra filtration that works for all quantum shuffle algebras, and we suspect that there are some quantum shuffle algebras for which this is not a bialgebra filtration. Roughly speaking, the issue that comes up when proving comultiplication preserves the filtration is that twisted multiplication on $T\otimes T$ depends on the braiding so we need to check that the braiding $R\colon T\otimes T\rightarrow T\otimes T$ respects the weight filtration, which will only happen if the braiding behaves nicely with respect to primitive elements. Nevertheless, in the finite monodromy case, Proposition \ref{prop: equiv weight filtration} will imply that this is indeed a bialgebra filtration.
\subsection{Proof of equivalence} Recall that we assumed the local systems $\mathcal L(V[1])$ have finite monodromy, and under this assumption we prove the following correspondence.
\begin{proposition}\label{prop: equiv weight filtration}
The algebraic weight filtration $W_w\mathfrak A_n$ corresponds to the geometric weight filtration $W_w\mathcal F_n$ under the equivalence of Theorem \ref{thm: equivalence of filtrations}. Dually, $W_w T_n$ corresponds to $W_w\D\mathcal F_n$.
\end{proposition}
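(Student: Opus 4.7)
I would prove Proposition \ref{prop: equiv weight filtration} by induction on $n$, treating the statements for $\mathfrak{A}_n, \mathcal{F}_n$ and $T_n, \D\mathcal{F}_n$ simultaneously. The base cases $n = 0, 1$ are trivial because both filtrations are concentrated in weight $0$ by construction.

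For $n \ge 2$, assume the correspondence holds for all $p < n$, and let $W'_w\mathcal{F}_n$ denote the perverse filtration obtained from the algebraic weight filtration $W_w\mathfrak{A}_n$ via Theorem \ref{thm: equivalence of filtrations}. The first step is to check that $W'_w\mathcal{F}_n = W_w\mathcal{F}_n$ on the open subset $U = \Sym^n(\C) \setminus \Delta_n$, where $\Delta_n$ is the deep diagonal stratum. By factorizability (Definition \ref{def: FPS}), $\mathcal{F}_n|_U$ is locally an exterior tensor product $\mathcal{F}_p \boxtimes \mathcal{F}_q$ with $p, q < n$; the geometric weight filtration on mixed Hodge modules is compatible with $\boxtimes$, and the induction hypothesis identifies $W_\bullet\mathcal{F}_p$ with $W_\bullet\mathfrak{A}_p$. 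Thus both filtrations restricted to $U$ are determined by the lower-degree pieces. On the algebra side, this exactly accounts for the summand $\mu(W_w S)$ in Definition \ref{defn: alg weight filtration}, since that summand captures the contribution of products of smaller-degree factors.

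The main step is to analyze the situation at the diagonal $\Delta_n$. Taking stalks at the origin and using Proposition \ref{prop: equivalence properties}(d), the two candidate filtrations induce filtrations on the bar-complex $B_n(\mathfrak{A})$. Applying Proposition \ref{prop: bi-filtrations} to the weight filtration together with the codimension filtration from Section \ref{sec: equivalence of word length and codim filtration}, the discrepancy between $W'$ and $W$ is concentrated on the top cohomological term $\mathfrak{A}_n$ of the bar-complex. On all lower terms of the form $\bigoplus_{\lambda}\mathfrak{A}_{\lambda_1}\otimes\cdots\otimes\mathfrak{A}_{\lambda_m}$ with $m \ge 2$, the filtration is determined inductively by the tensor filtration formula, and these terms sit in the strictly positive codimension part of the bi-filtration.

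The crucial technical input, highlighted in the introduction after Theorem \ref{thm: equivalence of filtrations}, is that in the weight spectral sequence only differentials shifting weight by exactly $1$ can be nontrivial; this reflects strictness of the weight filtration on mixed Hodge modules together with the fact that extensions between pure objects of the same weight split. Using this constraint, the weight filtration on $\mathfrak{A}_n$ (and dually on $T_n$) is pinned down: an element lies in $W_w$ either because it already comes from the image of weight-$w$ factors under multiplication, or because it is primitive (hence the bar differential cannot push it up in weight by $1$) and its multiplicative lift has weight $w+1$. Dualizing gives exactly the inductive formula $W_w T_n = \mu(W_w S) + (\mu(W_{w+1} S) \cap P_n)$ of Definition \ref{defn: alg weight filtration}. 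I expect the main obstacle to be cleanly justifying the weight-$1$-shift phenomenon via strictness in the derived category of mixed Hodge modules and matching it with the primitivity condition appearing in the algebraic definition; Proposition \ref{prop: bi-filtrations} is the key tool that reduces this to a term-by-term comparison on the bi-filtered bar-complex.
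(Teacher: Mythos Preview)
Your outline follows the paper's strategy in broad strokes (induction on $n$, factorizability away from the diagonal, reduction to the bar-complex at the origin, use of the bi-filtration comparison Proposition~\ref{prop: bi-filtrations}), but the heart of the argument is missing and one point is stated backwards.

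The decisive observation in the paper is that the argument must be carried out on the \emph{dual} side, with $T=T(V^*)$ and $\D\mathcal F_n=j_{n!}\mathcal L_n^\vee$, precisely because the stalk $i_0^*j_{n!}\mathcal L_n^\vee$ vanishes. This makes the abutment of the relevant spectral sequence zero, which is what forces the formula. On the $\mathfrak A$/$j_*$ side the stalk is the full bar-complex, not zero, and you get no such constraint. You mention ``dualizing gives'' only at the very end, but the whole computation has to live on the $j_!$ side from the start.

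Concretely, the paper splits $\D\mathcal F_n$ by the short exact sequence $0\to i_*C_n^\vee\to \D\mathcal F_n\to v_{!*}v^*\D\mathcal F_n\to 0$, places the separate weight filtrations on the two outer terms, concatenates them into an auxiliary filtration $G_\bullet\D\mathcal F_n$, and runs the spectral sequence for $i_0^*$ with respect to $G$. On the $E_0$ page the $v_{!*}$ piece has surjective last map (so its degree $-1$ cohomology dies), while the $i_*C_n^\vee$ piece contributes exactly $P_n\cap\Gr^{W'}_wT_n$ in degree $-1$ with nothing in degree $-2$. Since the abutment is zero, these primitive pieces must be killed by later differentials. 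The Hodge-theoretic weight constraint then says those differentials can only come from the degree $-2$ term one weight \emph{higher}, i.e.\ from (a quotient of) $\ker\bigl(\bigoplus\Gr^W_{w+1}(T_p\otimes T_q)\to\Gr^{W'}_{w+1}T_n/P_n\bigr)$, and since the differential is induced by concatenation this yields $P_n\cap W'_wT_n=P_n\cap\mu(W_{w+1}S)$. Together with the surjectivity onto $\Gr^{W'}_wT_n/P_n$ this gives exactly Definition~\ref{defn: alg weight filtration}. Your sentence ``the bar differential cannot push it up in weight by $1$'' inverts this: the primitive pieces are not protected from a weight shift, they are \emph{hit} by a differential from one weight higher, and that is the entire mechanism producing the $\mu(W_{w+1}S)\cap P_n$ term.
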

\begin{proof}
Let $W_w'T_n$ be the weight filtration that corresponds to $W_wj_{n!}\mathcal L_n^\vee$, and we will prove that $W'_wT_n=W_wT_n$ by induction on $n$. Suppose that both filtrations are equal in algebraic degree less than $n$ for some $n>0$.

Let $V=\Sym^n(\C)\setminus \C$ be the complement of the diagonal, with $u\colon \Sym^n_{\neq}(\C)\hookrightarrow V$, $v\colon V\hookrightarrow \Sym^n(\C)$ and $i\colon \C\hookrightarrow \Sym^n(\C)$. Recall from the codimension filtration $F_c\mathcal F_n$ that we have $F_{n-1}\mathcal F=v_{!*}u_*\mathcal L_n=v_{!*}v^*\mathcal F_n$, and by the property of the middle extension the quotient $\Gr^F_n\mathcal F_n=\mathcal F_n/F_{n-1}\mathcal F_n$ is supported on the diagonal. Hence, we have an exact sequence
$$0\rightarrow v_{!*}v^*\mathcal F_n \rightarrow \mathcal F_n\rightarrow i_*C_n\rightarrow 0$$
where by translation invariance, $C_n$ is a constant sheaf on the diagonal. Furthermore, just like in the proof of Proposition \ref{prop: word length codim equivalence}, from Section \ref{sec: spectral sequences} we see that $i_0^*\Gr^F_n\mathcal F_n\cong \Gr^F_n B_n(\mathfrak A_n)$ where $i_0\colon \{0\}\hookrightarrow \Sym^n(\C)$. From this, we see that $C_n$ is the constant sheaf in degree $-1$ with values in $\mathfrak A_n/\mu(\bigoplus\mathfrak A_p\otimes \mathfrak A_q)$ where the direct sum is over $p+q=n$ satisfying $0<p,q<n$. Taking the dual, we have
$$0\rightarrow i_*C_n^\vee\rightarrow \D\mathcal F_n\rightarrow v_{!*}v^*\D\mathcal F_n\rightarrow 0$$
where we note that $C_n^\vee$ is now the sheaf in degree $-1$ valued in primitive elements $P_n\subseteq T_n$ which is the dual of $\mathfrak A_n/\mu(\bigoplus\mathfrak A_p\otimes \mathfrak A_q)$.

We now consider the weight filtrations on $i_*C_n^\vee$ and $v_{!*}v^*\D\mathcal F_n$ which we write as
$$0=W_{-a}v_{!*}v^*\D\mathcal F_n\subseteq \cdots \subseteq W_0v_{!*}v^*\D\mathcal F_n=v_{!*}v^*\D\mathcal F_n$$
$$0=W_{-b}i_*C_n^\vee\subseteq \cdots \subseteq W_0i_*C_n^\vee=i_*C_n^\vee$$
where for notational purposes we select some sufficiently large indices $a,b$. Then, we concatenate them naively via the short exact sequence above to a filtration
$$G_{-a-b}\D\mathcal F_n \subseteq \cdots \subseteq G_0\D\mathcal F_n$$
where $\Gr^{G}_{w-a}\D\mathcal F_n=\Gr^W_{w}i_*C_n^\vee$ for $-b< w\le 0$ and $\Gr^{G}_{w}\D\mathcal F_n=\Gr^W_{w}v_{!*}v^*\D\mathcal F_n$  for $-a< w\le 0$. The composition factors can be rearranged to obtain the weight filtration $W_w\D\mathcal F_n$. In fact, by semisimplicity of (polarizable) pure Hodge modules \cite[Corollary 5.2.13] {Saito_1988}, we have
$$\Gr^W_w\D\mathcal F_n=\Gr^W_{w}i_*C_n^\vee\oplus \Gr^W_w v_{!*}v^*\D\mathcal F_n.$$
In principle, we understand the weight filtration on $v^*\D\mathcal F_n$ (and hence $v_{!*}v^*\D\mathcal F_n$) in terms of lower algebraic degrees by factorizability, as we can cover $\Sym^n(\C)\setminus \C$ with opens of the form $\Sym^p(U)\times \Sym^q(V)$. On the other hand, we do not know the weight filtration on $i_*C_n^\vee$, and we will need to deduce it from the following spectral sequence argument.

Consider the spectral sequence
$$E_1^{p,q}=H^{p+q}(i_0^*\Gr^{G}_{p}\D\mathcal F_n) \Rightarrow H^{p+q}(i_0^*\D\mathcal F_n)$$
where we recall that by quasi-homogeneity (contracting $\C^*$ action) we have $R\Gamma K \cong i_0^* K$ for any $K\in D^b_c(\Sym^n(\C),S_n,\mathcal V)$. This spectral sequence has the following two useful properties. Firstly, we have $i_0^*\D\mathcal F_n \cong i_0^*j_{n!}\mathcal L_n^\vee = 0$ by the property of extensions by zero, so the abutment is zero. Secondly, only differentials between terms of the same weight are nontrivial. To illustrate this, note that each $\Gr^{G}_{p}\D\mathcal F_n$ has pure weight, so $i_0^*\Gr^{G}_{p}\D\mathcal F_n\cong R\Gamma\Gr^{G}_{p}\D\mathcal F_n$ must be pure of the same weight because pullbacks can only decrease weight and pushforwards can only increase weight. Then, $H^{p+q}i_0^*\Gr^{G}_{p}\D\mathcal F_n$ is a pure Hodge structure, and maps between pure Hodge structures are only nonzero if they have the same weight.

To write down the spectral sequence, note that by Section \ref{sec: spectral sequences} we know that $i_0^*\Gr_w^W\D\mathcal F_n \cong \Gr_w^W B_n(T)$ is the graded part of the bar-complex on $T$
\begin{equation}\label{eqn: bar-complex tensor algebra}
\left\{\cdots \rightarrow \bigoplus_{\substack{p+q+r=n\\ 0<p,q,r<n}}\Gr^W_w(T_p\otimes T_q\otimes T_r)\rightarrow \bigoplus_{\substack{p+q=n\\ 0<p,q<n}}\Gr^W_w(T_p\otimes T_q )\rightarrow \Gr^{W'}_wT_n\right\}
\end{equation}
where $\Gr^{W'}_wT_n$ is in degree $-1$ and we used the induction hypothesis that $W'_wT_m=W_wT_m$ for $m<n$. This is quasi-isomorphic to the direct sum $i_0^*\Gr^W_{w}i_*C_n^\vee\oplus i_0^*\Gr^W_w v_{!*}v^*\D\mathcal F_n$, but a priori it is not clear that these can be represented as subcomplexes of the complex above. Instead, to see this, we recall that $i_*C_n= \Gr_n^F\mathcal F_n$ is a graded piece of the codimension filtration, so dually $i_*C_n^\vee = \Gr^n_F \D\mathcal F_n$, and by Section \ref{sec: spectral sequences} together with the discussion above we see that $i_0^*i_*C_n^\vee$ is isomorphic to the subcomplex of $B_n(T)$ which is $P_n$ in degree $-1$. By Proposition \ref{prop: bi-filtrations}, the isomorphisms to the bar-complex for the codimension filtration and weight filtration are compatible. Hence, we can indeed identify $i_0^*\Gr^W_wi_*C_n^\vee$ with the subcomplex $P_n\cap \Gr^{W'}_wT_n$ in degree $-1$ and $i_0^*\Gr^W_w v_{!*}v^*\D\mathcal F_n$ with the subcomplex given above by modifying the last term of Equation \eqref{eqn: bar-complex tensor algebra} to $\Gr^{W'}_wT_n/P_n$. Furthermore, note that $\Gr^W_w v_{!*}v^*\D\mathcal F_n$ has composition factors which are middle extensions from non-diagonal strata, so ${}^p\mathcal H^0(i^*\Gr^W_w v_{!*}v^*\D\mathcal F_n)=0$ which implies that $H^{-1}(i_0^*\Gr^W_w v_{!*}v^*\D\mathcal F_n)=0$, thus the map to $\Gr^{W'}_wT_n/P_n$ in the subcomplex of $i_0^*\Gr^W_w v_{!*}v^*\D\mathcal F_n$ must be surjective.

With this, we can draw the $E_0$ page in Figure \ref{fig4}, which is just the graded pieces of the bar-complex corresponding to the filtration $G$ which we obtain by reshuffling the subcomplexes above. We also take the index $b=a+1$ for notational convenience.
\begin{figure}[h]
\[\begin{tikzcd}[column sep=small,row sep=tiny]
	{\text{degree}} && {-2} & {-1} & \begin{array}{c} \text{complex}\\\text{weight} \end{array} \\
	{i_0^*\Gr^{W}_0v_{!*}v^*\D\mathcal F_n=} & \cdots & \begin{array}{c} \bigoplus_{\substack{p+q=n\\ 0<p,q<n}}\Gr^W_0(T_p\otimes T_q ) \end{array} & {\Gr^{W'}_0T_n/P_n} & 0 \\
	\vdots & \vdots & \vdots & \vdots & \vdots \\
	{i_0^*\Gr^{W}_{-a+1}v_{!*}v^*\D\mathcal F_n=} & \cdots & \begin{array}{c} \bigoplus_{\substack{p+q=n\\ 0<p,q<n}}\Gr^W_{-a+1}(T_p\otimes T_q ) \end{array} & {\Gr^{W'}_{-a+1}T_n/P_n} & {-a+1} \\
	{i_0^*\Gr^{W}_0i_*C_n^\vee=} & \cdots & 0 & {P_n\cap \Gr^{W'}_0T_n} & 0 \\
	{i_0^*\Gr^{W}_{-1}i_*C_n^\vee=} & \cdots & 0 & {P_n\cap \Gr^{W'}_{-1}T_n} & {-1} \\
	\vdots & \vdots & \vdots & \vdots & \vdots \\
	{i_0^*\Gr^{W}_{-a}i_*C_n^\vee=} & \cdots & 0 & {P_n\cap \Gr^{W'}_{-a}T_n} & {-a}
	\arrow[from=2-2, to=2-3]
	\arrow[two heads, from=2-3, to=2-4]
	\arrow[dashed, from=2-3, to=6-4]
	\arrow[dashed, from=3-3, to=7-4]
	\arrow[two heads, from=4-3, to=4-4]
	\arrow[dashed, from=4-3, to=8-4]
	\arrow[from=5-2, to=5-3]
	\arrow[from=5-3, to=5-4]
	\arrow[from=6-2, to=6-3]
	\arrow[from=6-3, to=6-4]
	\arrow[from=8-2, to=8-3]
	\arrow[from=8-3, to=8-4]
\end{tikzcd}\]
\caption{$E_0$ page of spectral sequence.}\label{fig4}
\end{figure}

The differentials in the spectral sequence always go from cohomology degree $d$ to $d+1$. On the $E_m$ page, they go one to the right and $m$ rows downwards. All differentials are induced by concatenation, because the spectral sequence comes from the filtration of the bar-complex whose maps are given by concatenation. On the $E_0$ page, we take differentials along the solid arrows to get the cohomology of the complexes. From the $E_1$ page onwards, the nonzero differentials from degree $d$ to $d+1$ must go from $w$ to $w-1$ in complex (derived) weight, so that they are between pure Hodge structures of the same weight $d+w=(d+1)+(w-1)$. Here, we recall from Section \ref{sec: weight filtration over C} that a complex $K\in D^b(\MHM(X))$ has complex (derived) weight $w$ if and only if each $H^j(K)$ has weight $w+j$, and that there are no maps between pure Hodge structures of different weight.

With this, we can draw all possible nonzero differentials from degree $-2$ to degree $-1$ from the $E_1$ page onwards using dotted arrows in the figure. There may be differentials from degree $d$ to degree $d+1$ where $d<-2$, but these do not affect the degree $-1$ entries, and they only replace the degree $-2$ entries with a quotient, so they do not really concern us and we do not consider them. We see that these dotted arrows are on the $E_{a+1}$ page, going one to the right and $a+1$ rows down. There could potentially be nontrivial differentials also on the $E_1$ page going from degree $-2$ to $-1$ as they go from complexes of weight $w$ to $w-1$. However, these differentials all turn out to be trivial, either because the degree $-1$ term is killed by a surjection on the $E_0$ page (first $a$ rows in the figure), or the degree $-2$ term is already zero (last $a+1$ rows in the figure).

As discussed earlier, we know that the spectral sequence abuts to zero, so the differentials indicated by dotted arrows in the diagram must kill the terms of the form $P_n\cap \Gr^{W'}_{-a}T_n$. Let us first look at the lowest term $P_n\cap W'_{-a}T_n$. The differential comes from a quotient of $\ker(\bigoplus \Gr^W_{-a+1}(T_p\otimes T_q )\rightarrow \Gr^{W'}_{-a+1}T_n/P_n)$ and is given by concatenation, so we must have $P_n\cap W'_{-a}T_n=P_n\cap \mu(\bigoplus \Gr^W_w(T_p\otimes T_q ))$. Working our way up the diagram, we obtain 
$$P_n\cap W'_{w}T_n=P_n\cap \mu\left(\bigoplus \Gr^W_{w+1}(T_p\otimes T_q )\right)$$ 
for all $w$ in the same way. Finally, given that the map to $\Gr^{W'}_{w}T_n/P_n$ is a surjection, we see that $$W'_{w}T_n/P_n= \mu\left(\bigoplus \Gr^W_{w}(T_p\otimes T_q)\right)$$
for all $w$. Combining these two equations tells us that $W'_wT_n=W_wT_n$ agrees with the algebraic weight filtration, completing the proof.
\end{proof}
\subsection{Examples}\label{sec: weight examples}
We use the same three examples in Section \ref{sec: word filtration examples} to compare the algebraic weight filtration with the shifted word length filtration. We will see that they are indeed different filtrations, although they agree in some special cases.

For the first example with $V=k$ and braiding given by $q\in k^\times$, note that we have finite monodromy if and only if $q$ is a root of unity. In this case, recall that we have an explicit expression $\mathfrak A\cong k[x_1]/x_1^m\otimes \Gamma[x_m]$. From this, it is easy to see that the only primitive element in $T$ with algebraic grading $n>1$ is $1^{\otimes m}$ corresponding to $x_m^*$. Hence, $1^{\otimes m}$ goes down one grading to $W_{-1}T$, and all terms after that are determined by concatenation that respects the weight grading. In particular, for $n=um+v$ for $0\le v<m$, we can write $1^{\otimes m}=(1^{\otimes m})^{\otimes u}\otimes 1^{\otimes v}$, so $1^{\otimes m}$ is in weight $-u$. When we view this in the quantum shuffle algebra $\mathfrak A$, the sign of the weights flip. We plot the dimensions for $m=3$ in Figure \ref{fig5}, and note that compared to Figure \ref{fig1}, the weights here only go up by $1$ each time. 
\begin{figure}[h]
\centering
\[
\begin{array}{c|ccccccccc}
 & \mathfrak A_0 & \mathfrak A_1 & \mathfrak A_2 & \mathfrak A_3 & \mathfrak A_4 & \mathfrak A_5 & \mathfrak A_6 & \mathfrak A_7 & \mathfrak A_8 \\
\hline
\Gr_0 & 1 & 1 & 1 &   &   &   &   &   &   \\
\Gr_1 &   &   &   & 1 & 1 & 1 &   &   &   \\
\Gr_2 &   &   &   &   &   &   & 1 & 1 & 1 
\end{array}
\]
\caption{Dimensions of weight filtration for $V=k$ and $q$ third root of unity.}\label{fig5}
\end{figure}

Next, we look at the usual shuffle algebra. Recall that from the discussion in Section \ref{sec: word filtration examples}, the space of primitive elements $P\subseteq T$ is the free Lie algebra $L(V^*)$ generated by $V^*$. We prove that the weight filtration here agrees with both the shifted word length filtration and the PBW filtration in Proposition \ref{prop: PBW word length agree} by $W_wT_n=F^{-w}T_n=F_{n+w}T_n$ for $n>0$. Indeed, by PBW, each element in $T$ can be written as a unique combination of terms $P_1\cdots P_t$ for $P_1\le \cdots \le P_t$, and from this the claim follows by induction and the definition of the weight filtration. Thus, the dimensions of the filtration are as given in Figure \ref{fig2}.

Lastly, we compute the dimensions for the braided vector space $V(c,x)$ where $c$ is the conjugacy classes in $G=S_3$. We find that when $x=-1$ the weight filtration agrees with the shifted word length filtration up to $n=5$ on the right of Figure \ref{fig3}, although we are unable to prove that it agrees in general. For $x=1$, the dimensions are different and are given in Figure \ref{fig6}. When $n=4$, the $2$ dimensional space of primitive elements in $T_4$ are in weight $-1$, so there is a corresponding $2$ dimensional space in weight $1$ for $\mathfrak A_4$. For $n=5$, the $6$ dimensional space of primitive elements in $T_5$ combine with the $12$ dimensional space generated from the primitive elements in $T_4$, yielding a $18$ dimensional space in weight $1$ for $\mathfrak A_5$.
\begin{figure}[h]
\centering
\[
\begin{array}{c|cccccc}
 & \mathfrak A_0 & \mathfrak A_1 & \mathfrak A_2 & \mathfrak A_3 & \mathfrak A_4 & \mathfrak A_5 \\
\hline
\Gr_0 & 1 & 3 & 9 & 27 & 79 & 225   \\
\Gr_1 &   &   &   &  & 2 & 18 
\end{array}
\]
\caption{Dimensions of weight filtration for $S_3$ transposition rack with cocycle $x=1$.}\label{fig6}
\end{figure}

The structure of the weight filtration depends solely on the primitive elements in the tensor algebra. However, in general, it is difficult to determine the primitive elements for braided algebra, for example, unlike the case for the usual shuffle algebra, the primitive elements no longer have a simple Lie algebra structure. \cite{Westerland_2025} investigates this in more detail, developing versions of the PBW and Cartier-Milnor-Moore theory in the setting of braided Hopf algebras. In particular, they define the braided primitive operads $\text{BrPrim}(n)$ which construct primitive elements from $n$ different elements. $\text{BrPrim}(2)$ is easy to understand, essentially the theory tells us that from elements $x,y$ we can construct the primitive element $$\frac{1}{2n}\left(\mu(x\otimes y)-\mu(R(x\otimes y))+\cdots -\mu(R^{2n-1}(x\otimes y))\right)$$
if $R^{2n}(x\otimes y)=x\otimes y$. However, there can be operations in $\text{BrPrim}(n)$ which are not generated by this. Indeed, we see this for $V(c,1)$ above where the first primitive element is in degree $4$, generated by an operation in $\text{BrPrim}(4)$, and it is not possible to get these degree $4$ primitive elements by repeatedly applying the operation in $\text{BrPrim}(2)$ starting from degree $1$ elements. However, it seems possible that a large class of shuffle algebras coming from Hurwitz spaces have the property that all primitive elements can be generated from repeatedly applying $\text{BrPrim}(2)$ starting from degree $1$ elements, and perhaps there is a geometric reason for this. If this were true, we might be able to say something more about the structure of the weight filtration.
\section{Comparison theorems for weight filtration in Hurwitz spaces}
\label{sec: construction weight filtration} 

In the previous section, we showed that the geometric weight filtration on $j_*\mathcal L(V[1])$ constructed via mixed Hodge modules over $\C$ corresponds to the algebraic weight filtration on the quantum shuffle algebra $\mathfrak A(V)$. Recall that we want to apply this to point counting over finite fields, so we need a comparison theorem between the weight filtrations of $j_*\mathcal L(V[1])$ over $\F_p$ and $\C$. It is difficult to do this in general, so in this section, we specialize to the case when $V$ comes from a Hurwitz space. This will allow us to prove Theorem \ref{thm: relate weight decompositions}, which allows us to compute the cohomological weights for Hurwitz spaces using quantum shuffle algebras.

Fix an integer $n\ge 1$, a group $G$, a union of conjugacy classes $c\subseteq G$, along with primes $p>|G|$ and $\ell>n$ with $p\neq \ell$. Fix an isomorphism $\iota\colon \overline\Q_\ell\xrightarrow\sim k$ and we will implicitly identify both coefficient fields. From now on, we will work over $S=\Spec(\Z_p)$, and we denote $s,\eta$ to be the special point and generic point respectively.

Note that we can view the Hurwtiz space $\Hur^{G,c}_n$ as a scheme over $\Z_p$ by Section \ref{sec: alg Hurwitz spaces}, as well as the spaces $\Sym^n_{\neq}(\A^1)$ and $\Sym^n(\A)$. Let $\pi\colon \Hur^{G,c}_n \rightarrow \Sym^n_{\neq}(\A^1)$ and recall that $j_n\colon \Sym^n_{\neq}(\A^1)\hookrightarrow \Sym^n(\A^1)$. Recall the discussion at the end of Section \ref{sec: KS equivalence} that the relevant braided vector space for Hurwitz spaces is $V_{\epsilon}=V(c,-1)\in \mathcal{YD}_G$, and that $\mathcal L_n=\mathcal L_n(V_{\epsilon}[1])$ on $\Sym^n_{\neq}(\C)$ is the local system in degree $-n$ given by the $B_n$-representation $c^n$. Also recall from Section \ref{sec: weight filtration over C} that we want $\mathcal L_n$ to be normalized such that it has weight $0$ as a complex, so over $\C$ we have $\mathcal L_n=\pi_* \underline{k}_{\Hur^{G,c}_n}(n/2)[n]$. This allows us to define $\mathcal L_n$ over $\Z_p$ by the same formula.

We defined the weight filtration on $j_{n*}\mathcal L_n$ on both $\F_p$ and $\C$ separately as in Section \ref{sec: weight filtration}, where we note that the finite monodromy assumption for $\mathcal L_n$ over $\C$ is valid because it comes from a finite étale cover. Hence, we have two spectral sequences, one for each setting:
\begin{equation}\label{eqn: both spectral sequence}
\begin{split}
E_1^{p,q}=R\Gamma^{p+q}(\Sym^n(\C),\Gr^W_{p}(j_{n*}\mathcal L_n)_\C)&\Rightarrow R\Gamma^{p+q}(\Sym^n(\C),(j_{n*}\mathcal L_n)_\C),\\
E_1^{p,q}=R\Gamma^{p+q}_{\text{ét}}(\Sym^n(\A^1_{\overline\F_p}),\Gr^W_{p}(j_{n*}\mathcal L_n)_{\overline \F_p})&\Rightarrow R\Gamma^{p+q}_{\text{ét}}(\Sym^n(\A^1_{\overline\F_p}),(j_{n*}\mathcal L_n)_{\overline \F_p}).
\end{split}
\end{equation}
In particular, both spectral sequences induce the respective weight filtrations on the abutment. 

We understand the first spectral sequence over $\C$ very explicitly in the language of quantum shuffle algebras. Indeed, by Proposition \ref{prop: equiv weight filtration} it is isomorphic to the spectral sequence on Tor-homology in Theorem \ref{thm: equivalence of filtrations}. For the second spectral sequence, because
$R\Gamma j_{n*}\mathcal L_n=R\Gamma j_{n*}\pi_* \underline{k}_{\Hur^{G,c}_n}(n/2)[n] = R\Gamma \underline{k}_{\Hur^{G,c}_n}(n/2)[n]$, we have
$$R\Gamma^{p+q}_{\text{ét}}(\Sym^n(\A^1_{\overline\F_p}),(j_{n*}\mathcal L_n)_{\overline \F_p})\cong H^{p+q+n}_{\text{ét}}(\Hur^{G,c}_{n,\overline \F_p},\overline\Q_\ell)(n/2).$$
Hence, we can transport the weight filtration to the cohomology of Hurwitz space via the isomorphism above. The $w$ graded part of $R\Gamma^{p+q}_{\text{ét}}(\Sym^n(\A^1_{\overline\F_p}),(j_{n*}\mathcal L_n)_{\overline \F_p})$ would have eigenvalues of Frobenius acting by $q^{(w+p+q)/2}$ (because weights are defined in the derived sense, and pure is pointwise pure in the case of a point, see Section \ref{sec: weights over F_p}). Thus, the eigenvalues on the $w$ graded part of $H^{i}_{\text{ét}}(\Hur^{G,c}_{n,\overline \F_p},\overline\Q_\ell)$ will have eigenvalues $q^{(w+i)/2}$ after accounting for the Tate twist and degree shift, so the corresponding eigenvalues on $H^{2n-i}_{c}(\Hur^{G,c}_{n,\overline \F_p},\overline\Q_\ell)\cong H^{i}_{\text{ét}}(\Hur^{G,c}_{n,\overline \F_p},\overline\Q_\ell)^\vee (n)$ have eigenvalues have absolute value at most $q^{(2n-i-w)/2}$. 

Hence, from the discussion above, to complete the proof of Theorem \ref{thm: relate weight decompositions} it remains to bridge the gap between the weight spectral sequences over $\F_p$ and $\C$. This is done in the following theorem, which generalizes the comparison theorem in \cite[Proposition 7.7, 7.8]{EVW16}. 
\begin{theorem}\label{thm: comparison theorem}
There is a natural isomorphism between the two spectral sequences in Equation \eqref{eqn: both spectral sequence}.
\end{theorem}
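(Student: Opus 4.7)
The plan is to construct both spectral sequences of Equation \eqref{eqn: both spectral sequence} as specializations of a single filtered complex defined integrally over $S = \Spec\Z_p$, where the filtration arises geometrically from a normal crossings compactification. By the Bejleri-Landesman construction, we have a smooth scheme $\overline{\Hur}^{G,c}_n$ over $\Z_p$ together with a proper morphism $\overline{\pi}_n\colon \overline{\Hur}^{G,c}_n \to \Sym^n(\A^1)$ and an open immersion $J\colon \Hur^{G,c}_n \hookrightarrow \overline{\Hur}^{G,c}_n$ whose complement is a relative strict normal crossings divisor $D = \bigcup_i D_i$. Let $D^{(p)} = \bigsqcup_{|I|=p} D_I$ with inclusion $a_p\colon D^{(p)} \to \overline{\Hur}^{G,c}_n$. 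Since $\overline{\pi}_n \circ J = j_n \circ \pi_n$, proper pushforward gives $j_{n*}\mathcal L_n \cong R\overline{\pi}_{n*}(RJ_*\underline k)[n](n/2)$, compatibly with base change to either geometric fiber.

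The first step is to construct an explicit geometric weight filtration $W_\bullet$ on $RJ_*\underline k$ over $\Z_p$ via Deligne's classical description for snc complements: the graded pieces are, up to shift and Tate twist, the constant sheaves $(a_p)_*\underline k_{D^{(p)}}$ supported on the smooth proper schemes $D^{(p)}$. Because the snc structure is defined over $\Z_p$, this filtration lifts integrally. Pushing forward along $\overline{\pi}_n$ yields a filtration on $j_{n*}\mathcal L_n$ over $\Z_p$ whose graded pieces are proper pushforwards of constant sheaves from smooth proper schemes, hence pure of the correct weight in both the Hodge-theoretic and Frobenius senses. By uniqueness of the respective weight filtrations, the base change of this $\Z_p$-filtration to $\C$ recovers the mixed-Hodge-module weight filtration from Section \ref{sec: weight filtration over C}, and its base change to $\overline{\F}_p$ recovers the Frobenius weight filtration from Section \ref{sec: weights over F_p}.

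Both spectral sequences in Equation \eqref{eqn: both spectral sequence} are therefore identified, on the respective fiber, with the spectral sequence associated to the $\Z_p$-filtered complex $R\Gamma(\Sym^n(\A^1), W_\bullet j_{n*}\mathcal L_n)$. To compare the two fibers, I would apply the nearby cycles functor $R\psi$ along the specialization $\eta \leadsto s$ to each graded piece: after pushforward to $\Sym^n(\A^1)$, each graded piece is the derived pushforward of a constant sheaf from a scheme smooth and proper over $\Z_p$, so the nearby cycles are trivial and proper smooth base change identifies its cohomology on the two geometric fibers. Combined with the standard $\overline{\Q}_\ell$-to-Betti comparison on the generic fiber (applied to cohomology of smooth proper varieties), this matches the $E_1$ pages. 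Since the filtered complex itself is defined over $\Z_p$, all differentials are also identified, yielding the required natural isomorphism of spectral sequences.

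The main technical obstacle is verifying that the Bejleri-Landesman compactification behaves well enough over $\Z_p$: specifically, that the closed strata $D_I$ are smooth over $\Z_p$ and meet transversally in the relative sense at primes $p > |G|$, so that Deligne's weight filtration construction lifts integrally and the nearby cycles on the boundary strata are trivial. Once this integral snc property is in hand, the rest of the argument — the geometric construction of the filtration, specialization to both weight filtrations, and identification via vanishing cycles — proceeds in a relatively formal way.
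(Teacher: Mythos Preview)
Your approach is essentially identical to the paper's: both use the Bejleri--Landesman normal crossings compactification to build a Deligne-style weight filtration over $\Z_p$ with graded pieces supported on the smooth strata $D_I$, verify it specializes to the mixed-Hodge and Frobenius weight filtrations on the respective fibers, and then compare the two fibers via vanishing/nearby cycles using that the $D_I$ are smooth over $\Z_p$. Two small technical corrections: $\overline{\Hur}^{G,c}_n$ is a smooth proper Deligne--Mumford stack rather than a scheme, and it maps to $\Sym^n(\mathbb P^1)$ rather than to $\Sym^n(\A^1)$, so the paper factors the passage to $j_{n*}\mathcal L_n$ as a proper pushforward $\rho_*$ to $\Sym^n(\mathbb P^1)$ followed by an open pullback $v^*$, with an extra spectral sequence step (Equation~\eqref{eqn: spectral sequence rho pushforward}) to handle the fact that $\rho_*$ is only perverse left-exact.
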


The rest of this section is dedicated to the proof of Theorem \ref{thm: comparison theorem}. We start by recalling the normal crossings compactification of Hurwitz space constructed in \cite[Appendix B]{ellenberg2025homologicalstabilitygeneralizedhurwitz}, which uses the notion of twisted stable maps developed in \cite{Abramovich_Vistoli_2001,Abramovich_Corti_Vistoli_2003} which was in turn motivated by admissible covers \cite{Harris_Mumford_1982}. In this normal crossings setting it is easy to construct the weight filtration of the pushforward of the constant sheaf, and from this we can deduce the weight filtration for $j_{n*}\mathcal L_n$ over $\Z_p$ via taking pushforwards and pullbacks. Lastly, we use a vanishing cycles argument with respect to $\Z_p$ to prove the comparison theorem of weight filtrations over the special fiber $\F_p$ and the generic fiber $\C$. 
\subsection{Compactification of Hurwitz space} \label{sec: compactification of hurwtiz space} We state the normal crossings compactification of $\Hur^{G,c}_n$ together with some of its properties.
\begin{proposition}
There is a normal crossings compactification $u\colon \Hur^{G,c}_n\hookrightarrow \overline{\Hur^{G,c}_n}$ over $\Z_p$ where $\overline{\Hur^{G,c}_n}$ is a smooth proper Deligne-Mumford stack, and there is a cartesian diagram
\[\begin{tikzcd}
	{\Hur^{G,c}_n} & {\overline{\Hur^{G,c}_n}} \\
	{\Sym^n_{\neq}(\A^1)} & {\Sym^n(\mathbb P^1).}
	\arrow[hook, from=1-1, to=1-2]
	\arrow[from=1-1, to=2-1]
	\arrow[from=1-2, to=2-2]
	\arrow[hook, from=2-1, to=2-2]
\end{tikzcd}\]
\end{proposition}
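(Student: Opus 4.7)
The plan is to construct $\overline{\Hur^{G,c}_n}$ as a suitable component of a moduli stack of twisted stable maps in the sense of \cite{Abramovich_Vistoli_2001, Abramovich_Corti_Vistoli_2003}, and then verify the four properties in the statement: smoothness, properness, normal crossings boundary, and the cartesian square. Since $p > |G|$, the orders of all automorphism groups involved are invertible on $S = \Spec(\Z_p)$, which is what allows the Abramovich-Vistoli machinery to apply integrally rather than only over $\C$.

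First, I would define $\overline{\Hur^{G,c}_n}$ as follows. Consider the stack whose $T$-points parametrize data $(\mathcal C \to \mathcal P \to T, D)$ where $\mathcal P \to T$ is a genus $0$ twisted curve (stable up to marking the point at infinity as in the root stack construction used in Section \ref{sec: alg Hurwitz spaces}), $\mathcal C \to \mathcal P$ is a finite flat $G$-cover étale away from a divisor of length $n$ whose local inertia lies in $c$, and the cover is equipped with a marking over the distinguished point at infinity as in \cite[Section 2.1]{landesman2025cohenlenstramomentsfunctionfields}. The open locus where $\mathcal P \cong \mathbb P^1$ and the branch divisor lies in $\A^1 \subset \mathbb P^1$ with distinct support recovers $\Hur^{G,c}_n$ as an open substack, providing the embedding $u$. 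Smoothness and properness of $\overline{\Hur^{G,c}_n}$ then follow from the corresponding properties of the Abramovich-Vistoli moduli of twisted stable maps, using that $p > |G|$ to avoid wild ramification issues and to ensure the twisted covers have tame inertia; alternatively, we appeal directly to the construction of \cite[Appendix B]{ellenberg2025homologicalstabilitygeneralizedhurwitz} which carries this out in the integral setting.

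Next, I would analyze the boundary $\overline{\Hur^{G,c}_n}\setminus \Hur^{G,c}_n$ étale-locally. Near a boundary point, $\mathcal P$ acquires nodes and/or the branch divisor degenerates so that some branch points collide (including at infinity). Deformation theory for twisted stable maps shows that the versal deformation space of a boundary point with $r$ collisions is étale-locally isomorphic to $\Spec \Z_p[[t_1,\ldots, t_N]]$ with the boundary cut out by $t_1 \cdots t_r = 0$, giving a simple normal crossings divisor. Here I use that collisions of branch points correspond to nodes in the twisted base and each such node contributes a single smoothing parameter. This is the main technical step and also the main obstacle, as one must verify that even in the presence of nontrivial local monodromy twisting, the deformation space of the cover does not pick up extra singular branches beyond those of the base.

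Finally, the cartesian square is obtained from the branch morphism $\overline{\Hur^{G,c}_n} \to \Sym^n(\mathbb P^1)$ sending a cover to its branch divisor (computed with multiplicities from the different and the root-stack convention at $\infty$). That the square is cartesian amounts to the assertion that a twisted cover has its branch divisor entirely contained in $\A^1$ with distinct support if and only if the underlying twisted base is $\mathbb P^1$ with no nodes and the branch points are disjoint, which is immediate from the definitions. Since the vertical maps are separated and the top map is an open immersion, this identifies $\Hur^{G,c}_n$ with the preimage of $\Sym^n_{\neq}(\A^1)$ as required.
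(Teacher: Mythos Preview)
Your approach is essentially the same as the paper's: both invoke the twisted stable maps compactification of \cite{Abramovich_Vistoli_2001,Abramovich_Corti_Vistoli_2003} via \cite[Appendix B]{ellenberg2025homologicalstabilitygeneralizedhurwitz}, and both obtain the cartesian square from the branch/marked-points morphism. The paper simply cites \cite[Corollary B.1.4]{ellenberg2025homologicalstabilitygeneralizedhurwitz} with $B=\Spec\Z[1/|G|]$, $C=\mathbb P^1$, $Z=\infty$ as a black box, whereas you sketch the content of that result; one small imprecision is that your moduli problem $(\mathcal C\to\mathcal P\to T)$ omits the stable map $\mathcal P\to\mathbb P^1$, without which the ``branch divisor'' lives on the nodal $\mathcal P$ rather than on $\mathbb P^1$ and the morphism to $\Sym^n(\mathbb P^1)$ is not yet defined---the paper's phrasing in terms of the images of the $n$ marked points under the stable map to $C=\mathbb P^1$ makes this explicit.
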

\begin{proof} The first statement follows from
\cite[Corollary B.1.4]{ellenberg2025homologicalstabilitygeneralizedhurwitz} with $B=\Spec \Z[\frac{1}{|G|}]$, $C=\mathbb P^1$ and $Z=\infty$. We construct the cartesian diagram by sending a $G$-cover of $\Hur^{G,c}_n$ to its $n$ distinct branch points, and likewise by sending a balanced twisted stable map in $\overline{\Hur^{G,c}_n}$ to the $n$ marked points on the base curve $C$. It is clear from the construction that if these marked points are distinct and are not at $\infty$, then the balanced twisted stable map has to be an actual $G$-cover.
\end{proof}
We will first construct a weight filtration on $u_*\underline{k}_{\Hur^{G,c}_n}(n/2)[n]$, and later we will deduce the weight filtration of $j_{n*}\mathcal L_n$ from this. The compactification of $\Hur^{G,c}_n$ constructed above looks étale-locally like a standard normal crossing with $\A^n$ and boundary a union of coordinate hyperplanes. In the next subsection, we construct the weight filtration étale-locally in the standard normal crossing case, then show that the construction glues together.
\subsection{Weight filtration for standard normal crossings} \label{sec: standard normal crossings}
We consider the following standard normal crossings situation over $\Z_p$. Let $X=\A^n$ with coordinates $x_1,\ldots, x_n$ and the boundary divisor $D=D_1\cup\cdots\cup D_d$ where $D_i=\{x_i=0\}$ for $d\le n$. Let $U=X\setminus D$ with inclusion $u\colon U\hookrightarrow X$. Furthermore, for any subset $I\subseteq J$, denote $D_I=\bigcap_{i\in I}D_i$ and the inclusions $i_I\colon D_I\hookrightarrow X$ and $w_I\colon X\setminus D_I\hookrightarrow X$. 
For notational simplicity, write $i_j=i_{\{j\}}$ and $w_j=w_{\{j\}}$.

We want to define the weight filtration of $\mathcal F=u_*\underline{k}_U(n/2)[n]$ on the compactification $X$ over $\Z_p$. In this normal crossings case, the weight filtration will just be the usual codimension filtration on $X$ like in Section \ref{sec: equivalence of word length and codim filtration} which is not very surprising. Let $V_c=X\setminus \bigcup_{|I|=c+1}D_I$ be the union of codimension $\le c$ strata, and let $u_c\colon U\hookrightarrow V_c$, $v_c \colon V_c\hookrightarrow X$. We define the weight filtration to be 
\begin{equation}\label{eqn: weight filtration over Zp}
W_c\mathcal F = v_{c!*}u_{c*}\underline{k}_U(n/2)[n]
\end{equation}
so $W_{-1}\mathcal F=0$, $W_0\mathcal F=u_{!*}\underline{k}_U(n/2)[n]$ and $W_n\mathcal F=\mathcal F$.

To make sense of intermediate extensions over $\Z_p$, we need the theory of relative perversity developed by \cite{Hansen_Scholze_2023} which is equivalent to perversity on each geometric fiber. Just like the usual case, the relative perverse sheaves form an abelian category and we can construct intermediate extensions in the same way. 

We note several base change properties which is useful for our situation. By proper base change, extensions by zero $f_!$ commute with taking fibers. Furthermore, by \cite[Expose XIII, Lemma 2.1.10]{SGA7II}, if $f$ is an open immersion with normal crossing boundary, $f_*$ commutes with taking fibers (this also follows from the lemma below). This tells us that each term in the weight filtration above are relatively perverse by base changing to the geometric fibers $\overline s, \overline \eta$ and noting that $u_{c*}$ is an affine open immersion so it is t-exact on each fiber.

The following lemma tells us the graded pieces in the weight filtration. 
\begin{lemma}\label{lem: graded pieces in compactification}
We have
$$\Gr^W_c \mathcal F= \bigoplus_{|I|=c}i_{I*}\underline{k}_{D_I}(n/2-c)[n-c].$$
\end{lemma}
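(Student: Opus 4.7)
The plan is to reduce to a local product model and compute the graded pieces factor by factor. Since the construction $W_c\mathcal{F} = v_{c!*}u_{c*}\underline{k}_U(n/2)[n]$ is étale-local on $X$, I may assume $X = \A^d \times \A^{n-d}$, $D_i = \{x_i = 0\}$ for $i \leq d$, and $U = (\G_m)^d \times \A^{n-d}$, so the open immersion $u$ is a product. By the Künneth formula for $*$-pushforward along open immersions (via smooth base change),
\[
\mathcal{F} = \mathcal{F}_1 \boxtimes \cdots \boxtimes \mathcal{F}_n,
\]
with $\mathcal{F}_i = u_{1,i*}\underline{k}_{\G_m,i}(1/2)[1]$ for $i \leq d$ and $\mathcal{F}_i = \underline{k}_{\A^1,i}(1/2)[1]$ for $i > d$. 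For each $i \leq d$, the localization triangle $i_{0!}i_0^! \to \id \to u_{1*}u_1^*$ applied to $\underline{k}_{\A^1}$, combined with the purity identification $i_0^!\underline{k}_{\A^1} = \underline{k}_0(-1)[-2]$, yields the short exact sequence of perverse sheaves
\[
0 \to \underline{k}_{\A^1,i}(1/2)[1] \to \mathcal{F}_i \to i_{0,i*}\underline{k}_{0,i}(-1/2) \to 0,
\]
which is precisely the two-step codimension filtration on $\mathcal{F}_i$: $W_0\mathcal{F}_i = u_{1,i!*}\underline{k}_{\G_m,i}(1/2)[1] = \underline{k}_{\A^1,i}(1/2)[1]$ and $\Gr^W_1\mathcal{F}_i = i_{0,i*}\underline{k}_{0,i}(-1/2)$.

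The open $V_c$ decomposes locally as the union $\bigcup_{|I| \leq c}\,\prod_{i \in I}\A^1 \times \prod_{i \notin I,\, i \leq d}\G_m \times \A^{n-d}$, and middle extensions commute with external products on smooth varieties. Consequently, the codimension filtration $W_c\mathcal{F}$ should coincide with the convolution of the per-factor two-step filtrations,
\[
W_c\mathcal{F} = \sum_{I \subseteq \{1,\dots,d\},\, |I| \leq c}\; \Bigl(\boxtimes_{i \in I}\mathcal{F}_i\Bigr) \boxtimes \Bigl(\boxtimes_{i \notin I,\, i \leq d}W_0\mathcal{F}_i\Bigr) \boxtimes \Bigl(\boxtimes_{i > d}\mathcal{F}_i\Bigr),
\]
taken as a sum of subobjects in the abelian category of relative perverse sheaves over $\Spec \Z_p$. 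Taking the associated graded, the summand indexed by $I$ with $|I| = c$ contributes $\bigl(\boxtimes_{i \in I}i_{0,i*}\underline{k}_{0,i}(-1/2)\bigr) \boxtimes \bigl(\boxtimes_{i \notin I}\underline{k}_{\A^1,i}(1/2)[1]\bigr)$, which by Künneth for constant sheaves simplifies to $i_{I*}\underline{k}_{D_I}(n/2-c)[n-c]$: the total Tate twist is $-c/2 + (n-c)/2 = n/2-c$ and the total perverse shift is $(d-c) + (n-d) = n-c$. Summing over $|I| = c$ and globalizing via étale locality yields the claimed formula.

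The main obstacle is justifying the identification of $v_{c!*}u_{c*}$ with the convolution filtration above. My approach is to characterize both as subobjects of $\mathcal{F}$ whose composition factors are the middle extensions from strata $D_J$ with $|J| \leq c$, each appearing with multiplicity one. Both lift the canonical weight filtration on each geometric fiber $\bar{s}, \bar{\eta}$, where the codimension and convolution filtrations each agree with the (unique) weight filtration in the normal crossings case, so pullback to fibers forces agreement over $\Spec \Z_p$ by base change for relative intermediate extensions.
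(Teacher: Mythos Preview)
Your approach is genuinely different from the paper's and is in principle valid. The paper argues by induction on $d$: peel off one hyperplane $D_d$, apply the localization triangle $i_{d*}i_d^!\mathcal F'\to\mathcal F'\to w_{d*}w_d^*\mathcal F'$ together with purity $i_d^!\underline k_{D_I}=\underline k_{D_{I\cup\{d\}}}(-1)[-2]$, and then uses the symmetry in the choice of which hyperplane to remove last to argue that composition factors of the same codimension split into a direct sum. Your K\"unneth/convolution argument is more direct: the direct sum structure of $\Gr_c^W$ is manifest as the associated graded of a tensor product of two-step filtrations, with no symmetry argument needed. Both routes are fine; yours is arguably cleaner for seeing the splitting.

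The weak point is your final paragraph. Saying that the convolution filtration and $v_{c!*}u_{c*}\underline k_U(n/2)[n]$ are ``subobjects of $\mathcal F$ whose composition factors are the middle extensions from strata $D_J$ with $|J|\le c$, each with multiplicity one'' does not pin down a unique subobject: two subobjects can share composition factors without being equal. What you need, and what the paper invokes explicitly, is the characterization of $v_{c!*}u_{c*}\underline k_U(n/2)[n]$ as the \emph{maximal} perverse subobject of $\mathcal F$ with no simple constituents supported on $X\setminus V_c$. Your K\"unneth computation shows that the convolution piece has exactly this property (its graded pieces are supported on $\overline{V_c}$, and $\mathcal F$ modulo it is supported on $X\setminus V_c$), so the identification follows immediately.

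The fiber-wise weight argument you append is unnecessary and risks circularity: to know that the codimension filtration on each geometric fiber \emph{is} the weight filtration, one typically first computes its graded pieces and checks they are pure of the correct weight, which is precisely the content of the lemma you are proving (this is how the paper's subsequent Lemma uses the present one). Drop that argument and invoke the maximal-subobject characterization directly over $\Z_p$.
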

It is clear by base change that these are relatively perverse. Here, we keep track of the Tate twist as it is important for weight considerations. The idea of the proof is to extend across divisors inductively to come up with a composition series for $u_*\underline{k}_U[n]$ and then rearrange them into the weight filtration.
\begin{proof}
We induct on $d$, with $d=0$ being the trivial base case. Now we assume the statement for $d-1$ and attempt to prove it for $d$. We first ignore the hyperplane $D_d$ and look only at the hyperplanes $D_1,\ldots, D_{d-1}$. Let $K=\{1,\ldots, d-1\}$, $D'=\bigcup_{i\in K}D_i$, $U'=X\setminus D'$ and $u'\colon U'\hookrightarrow X$. Consider the perverse sheaf $\mathcal F'=u_{*}'\underline{k}_{U'}(n/2)[n]$, which has a filtration $W'_c\mathcal F'$ defined in the same way as Equation \eqref{eqn: weight filtration over Zp} but with only $d-1$ hyperplanes. By the inductive hypothesis, the graded pieces are given by 
$$\Gr^{W'}_c \mathcal F'= \bigoplus_{|I|=c, I\subseteq K} i_{I*}\underline{k}_{D_I}(n/2-c)[n-c].$$

We consider the distinguished triangle $$i_{d*}i^!_d\mathcal F'\rightarrow \mathcal F'\rightarrow w_{d*}w_d^*\mathcal F'\xrightarrow{+1},$$
and we note that $w_{d*}w_d^*\mathcal F'=\mathcal F$ is what we want, so after shifting we obtain
$$\mathcal F'\rightarrow \mathcal F\rightarrow i_{d*}i^!_d\mathcal F'[1]\xrightarrow{+1}.$$ 

We see that the filtration $W'_c\mathcal F'$ induces a filtration $i_{d*}i^!_dW'_c\mathcal F'[1]$ on $i_{d*}i^!_d\mathcal F'[1]$ by relative perverse sheaves. To see this, recall that the inclusion $i\colon D_{I\cup \{d\}}\hookrightarrow D_I$ is a embedding of smooth varieties, so by purity we have $i^!\underline{k}_{D_I}=\underline{k}_{D_{I\cup\{d\}}}(-1)[-2]$. Thus, applying $i_{d*}i^!_d$ and the shift $[1]$ to $i_{I*}\underline{k}_{D_I}(n/2-c)[n-c]$ which is a factor of $\Gr^{W'}_c\mathcal F'$, we get $i_{I\cup\{d\}*}\underline{k}_{D_{I\cup\{d\}}}(n/2-c-1)[n-c-1]$ which is relatively perverse. Since extensions of perverse sheaves in a derived category are necessarily perverse, $i_{d*}i^!_d\mathcal F'[1]$ is also relatively perverse, as the graded pieces of its filtration are relatively perverse.

Combining the filtrations on both $\mathcal F'$ and $i_{d*}i^!_d\mathcal F'[1]$, we get a filtration on $\mathcal F$ by relative perverse sheaves. The factors in the combined composition series have two forms, firstly $i_{I*}\underline{k}_{D_I}(n/2-c)[n-c]$ from $\mathcal F'$, and secondly $i_{I\cup\{d\}*}\underline{k}_{D_{I\cup\{d\}}}(n/2-c-1)[n-c-1]$ from $i_{d*}i^!_d\mathcal F'[1]$, where for both cases we take $I\subseteq K$. However, note that we could have chosen any different $d-1$ subset $K\subseteq \{1,\ldots, d\}$, so none of the composition factors of the same size appear strictly in front of the other, in other words they split and form a direct sum. 

Thus, we can form a filtration where $\Gr_c \mathcal F=\bigoplus_{|I|=c}i_{I*}\underline{k}_{D_I}(n/2-c)[n-c]$ as desired. To check that this agrees with the intermediate extension description, note that $v_{c!*}u_{c*}\underline{k}_U(n/2)[n]$ is the maximal subobject of $\mathcal F$ which does not contain any simple perverse sheaves supported on $X\setminus V_c$, and this is indeed true given the composition factors described above.
\end{proof}

To justify calling it the weight filtration, we will need to check that the fibers agree with our previous definitions. Recall from Section \ref{sec: weight filtration over C} that the weight filtration over $\C$ is determined by lifting perverse sheaves to mixed Hodge modules. In this case, we lift the constant sheaf $\underline{\Q}_{U_\C}$ to $\underline{\Q}^H_{U_\C}$, get a weight filtration on $u_* \underline{\Q}^H_{U_\C}(n/2)[n]$, apply the functor $\text{rat}$ and tensor to $k$.
\begin{lemma}\label{lem: weight agree with fibers std normal crossings}
The fibers of the weight filtration defined over $\Z_p$ in Equation \eqref{eqn: weight filtration over Zp} taken over $\F_p$ and $\C$ agrees with the weight filtrations previously defined in both contexts in Section \ref{sec: weight filtration}.
\end{lemma}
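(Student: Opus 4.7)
The strategy is to invoke the uniqueness of the weight filtration on a mixed perverse sheaf in each of the two settings: both over $\F_p$ and over $\C$, the weight filtration is the unique increasing filtration whose $c$-th graded piece is pure of weight $c$. It therefore suffices to show that the filtration defined in \eqref{eqn: weight filtration over Zp} is compatible with passage to the geometric fibers $\bar s, \bar\eta$, and that its graded pieces are pure of the correct weight on each fiber.

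First, I would verify compatibility with base change. Since $u_c$ is an affine open immersion with normal crossings boundary, $u_{c*}$ commutes with passage to geometric fibers (as noted after \eqref{eqn: weight filtration over Zp} via \cite[Expose XIII, Lemma 2.1.10]{SGA7II}), and the relative intermediate extension $v_{c!*}$ is defined fiberwise by construction. Hence $(W_c\mathcal{F})_{\bar *} = v_{c!*}u_{c*}\underline{k}_{U_{\bar *}}(n/2)[n]$ for $\bar *\in \{\bar s, \bar\eta\}$. Applying Lemma \ref{lem: graded pieces in compactification} to each fiber then reduces purity to the following direct computation: for $|I|=c$, the stratum $D_I$ is smooth of dimension $n-c$, so $\underline{k}_{D_I}[n-c]$ is pure of weight $n-c$, and the Tate twist by $(n/2-c)$ shifts the weight down by $n-2c$, yielding an object pure of weight $(n-c)-(n-2c)=c$. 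Pushforward along the closed immersion $i_I$ preserves both $D^b_{\le c}$ and $D^b_{\ge c}$, so $\Gr^W_c \mathcal{F}_{\bar *}$ is pure of weight $c$. Over $\F_p$ this immediately identifies our filtration with the weight filtration described in Section \ref{sec: weights over F_p}.

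The hard part will be the $\C$ case, where ``the weight filtration of $\mathcal{F}_\C$'' is by definition the image under $\text{rat}$ of the weight filtration of some mixed Hodge module lift, rather than being intrinsic to the underlying perverse sheaf. To reduce this case to the same purity argument, I would lift the construction of \eqref{eqn: weight filtration over Zp} to $D^b(\MHM(\Sym^n(\A^1_\C)))$ by replacing $\underline{k}_{U_\C}$ with the canonical constant Hodge module $\underline{\Q}^H_{U_\C}$, forming $v_{c!*} u_{c*} \underline{\Q}^H_{U_\C}(n/2)[n]$ using the intermediate extension functor in $\MHM$, and then tensoring up to $k$. Because $\text{rat}$ commutes with pushforward, shifts, Tate twists and intermediate extensions, applying it recovers the original construction on perverse sheaves. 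The analogous purity computation in the Hodge setting, together with uniqueness of the weight filtration in $\MHM$, shows the lifted construction is the weight filtration of $u_* \underline{\Q}^H_{U_\C}(n/2)[n]$, and pushing this through $\text{rat}$ completes the identification.
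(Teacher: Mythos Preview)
Your proposal is correct and follows essentially the same approach as the paper: verify that the filtration commutes with passage to fibers, compute the graded pieces via Lemma~\ref{lem: graded pieces in compactification} and check purity of weight $c$ over $\F_p$, and for $\C$ lift the construction to mixed Hodge modules where uniqueness of the weight filtration applies. The paper's proof is marginally more explicit about why $v_{c!*}$ commutes with taking fibers (writing out the intermediate extension as an image of ${}^p\mathcal H^0$'s and using t-exactness of base change), and for the Hodge case it phrases the lift as ``repeat the proof of Lemma~\ref{lem: graded pieces in compactification} in $\MHM$'' rather than lifting the definition directly, but these amount to the same argument.
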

\begin{proof}
We first show by base change that the fiber of the weight filtration still satisfies the same formulas above, i.e. for $x\in \{s,\eta\}$ we have both $(W_c\mathcal F)_x = v_{c,x!*}u_{c,x*}\underline{k}_{U_x}(n/2)[n]$ and $(\Gr^W_c \mathcal F)_x= \bigoplus_{|I|=c}i_{I,x*}\underline{k}_{(D_{I})_x}(n/2-c)[n-c]$. The second equation is clear from proper base change. For the first, recall that $W_c\mathcal F = \text{Im}({}^p\mathcal H^0(v_{c!}u_{c*}\underline{k}_U[n])\rightarrow {}^p\mathcal H^0(u_{*}\underline{k}_U[n]))(n/2)$. Taking fibers is t-exact with respect to both perverse structures, and $v_{c!},u_{c*},u_*$ all commute with taking fibers, so the first equation is true.

On the special fiber $s=\Spec(\F_p)$, it is clear that the perverse sheaves in $(\Gr^W_c\mathcal F)_x$ has weight $c$ as a complex, so this is indeed the weight filtration. For the generic fiber $\overline\eta = \Spec(\C)$, we can repeat the proof of Lemma \ref{lem: graded pieces in compactification} in the category of mixed Hodge modules, using the analogous properties stated in \cite[Section 4.4, 4.5]{Saito_1990} which includes the localization triangle and purity. This gives a filtration on $u_* \underline{\Q}^H_{U_\C}(n/2)[n]$ that after projecting and tensoring to $k$ agrees with $(W_c\mathcal F)_{\overline \eta}$. It is clear that this is the weight filtration on mixed Hodge modules by the analogue of Lemma \ref{lem: graded pieces in compactification}.
\end{proof}
\subsection{Transporting to $\Sym^n(\A^1)$}Now we return to the setting of Section \ref{sec: compactification of hurwtiz space}, again over $\Z_p$, with $u\colon \Hur^{G,c}_n\hookrightarrow \overline{\Hur^{G,c}_n}$, and we let $\mathcal F=u_*\underline{k}_{\Hur^{G,c}_n}(n/2)[n]$. It is clear by either Equation \eqref{eqn: weight filtration over Zp} or Lemma \ref{lem: graded pieces in compactification} that the construction in the previous subsection glues to a weight filtration on $\overline{\Hur^{G,c}_n}$ which we call $\Gr^W_w\mathcal F$, and Lemma \ref{lem: weight agree with fibers std normal crossings} tells us that it agrees with the weight filtration defined over $\F_p$ and $\C$.

Recall that we want a weight filtration on $j_{n*}\mathcal L_n$. Consider the following commutative diagram
\[\begin{tikzcd}
	{\Hur^{G,c}_n} && {\overline{\Hur^{G,c}_n}} \\
	{\Sym^n_{\neq}(\A^1)} & {\Sym^n(\A^1)} & {\Sym^n(\mathbb P^1).}
	\arrow["u", hook, from=1-1, to=1-3]
	\arrow["\pi"', from=1-1, to=2-1]
	\arrow["\rho"', from=1-3, to=2-3]
	\arrow["j_n", hook, from=2-1, to=2-2]
	\arrow["v", hook, from=2-2, to=2-3]
\end{tikzcd}\]
We have $j_{n*}\mathcal L_n=j_{n*}\pi_* \underline{k}_{\Hur^{G,c}_n}(n/2)[n] = v^*\rho_*u_*\underline{k}_{\Hur^{G,c}_n}(n/2)[n] = v^*\rho_*\mathcal F$, so we need to pushforward then pullback the weight filtration on $\mathcal F$.

The map $\rho_*$ is proper and hence preserves weight, but it is only t-left exact so $\rho_*\Gr^W_w\mathcal F$ may not be perverse. However, we know that $\rho_*\mathcal F= v_*j_{n*}\mathcal L_n$ is perverse as affine open immersions are t-exact. Hence, the spectral sequence associated to the weight filtration
\begin{equation}\label{eqn: spectral sequence rho pushforward}
E_1^{r,s}={}^{p}\mathcal H^{r+s}(\rho_*\Gr^W_{r}\mathcal F)\Rightarrow 
  {}^{p}\mathcal H^{r+s}(\rho_* \mathcal F)
\end{equation}
induces a filtration on the abutment $\rho_*\mathcal F={}^p\!H^0(\rho_*\mathcal F)$. Then, we pull this back to $\Sym^n(\A^1)$, which preserves both weight and perversity, to get the weight filtration on $j_{n*}\mathcal L_n$. It is easy to see, using a similar argument as in Lemma \ref{lem: weight agree with fibers std normal crossings}, that the fibers of this filtration agree with the weight filtration over $\F_p$ and $\C$. We state our results in the following proposition.
\begin{proposition}
There is a weight filtration $\Gr^W_wj_{n*}\mathcal L_n$ defined over $\Z_p$ such that the fibers over $\F_p$ and $\C$ agree with the weight filtrations previously defined in both contexts in Section \ref{sec: weight filtration}.
\end{proposition}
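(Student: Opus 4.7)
The filtration itself is essentially constructed in the paragraph immediately preceding the statement, so the content of the proposition reduces to verifying (i) that the construction actually produces a well-defined increasing filtration of relative perverse sheaves over $\Z_p$, and (ii) that restricting to either geometric fiber recovers the weight filtrations of Section \ref{sec: weight filtration}. My plan is to carry out these two checks in sequence.

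For (i), I would first recall that the étale-local construction of Section \ref{sec: standard normal crossings} glues, since Lemma \ref{lem: graded pieces in compactification} describes the graded pieces intrinsically as $\bigoplus_{|I|=c} i_{I*}\underline{k}_{D_I}(n/2-c)[n-c]$ in terms of the boundary stratification of the normal-crossings compactification $\overline{\Hur^{G,c}_n}$. This yields a finite increasing filtration $W_\bullet \mathcal F$ of $\mathcal F = u_*\underline{k}_{\Hur^{G,c}_n}(n/2)[n]$ by relatively perverse sheaves. Pushing forward by the proper map $\rho$, the complexes $\rho_* \Gr^W_w \mathcal F$ are pure of weight $w$ (since $\rho_*$ preserves weight), but may spread across several perverse degrees because $\rho_*$ is only t-left exact. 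The spectral sequence \eqref{eqn: spectral sequence rho pushforward} then produces an honest filtration on its abutment in perverse degree $0$; here we use crucially that $\rho_*\mathcal F = v_* j_{n*}\mathcal L_n$ is a single perverse sheaf (because $v$ is an affine open immersion, hence $v_*$ is t-exact), so the abutment is concentrated in that one perverse degree and the spectral sequence delivers a well-defined finite filtration whose graded pieces are subquotients of ${}^p\mathcal H^0(\rho_* \Gr^W_w \mathcal F)$, pure of weight $w$. Pulling back by $v^*$, which is t-exact and preserves weight, gives the desired filtration $W_\bullet j_{n*}\mathcal L_n$ over $\Z_p$.

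For (ii), the key observation is that every operation used in the construction commutes with base change to a geometric point $\overline{x} \in \{\overline{s}, \overline{\eta}\}$: $u_*$ commutes with base change by \cite[Expose XIII, Lemma 2.1.10]{SGA7II} since $u$ is an open immersion with normal-crossings boundary; intermediate extensions $v_{c!*}$ commute with base change because they are expressed via $v_{c!}$, $u_{c*}$, and the t-exact perverse truncations that commute with fiber-wise base change in the relative setting of \cite{Hansen_Scholze_2023}; $\rho_*$ commutes with base change by proper base change; and $v^*$ commutes trivially. Therefore $(W_\bullet j_{n*}\mathcal L_n)_{\overline x}$ coincides with the filtration obtained by running the same recipe on the geometric fiber. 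Over $\overline{s} = \Spec(\overline{\F_p})$, this produces a finite increasing filtration of $(j_{n*}\mathcal L_n)_{\overline s}$ by mixed perverse sheaves whose graded pieces are pure of weight $w$, so by uniqueness of the weight filtration in $\Perv(X,\overline\Q_\ell)$ (\cite{BBD}) it agrees with the filtration of Section \ref{sec: weights over F_p}. Over $\overline{\eta}$, the whole construction lifts to $D^b(\MHM(-))$ by replacing $\underline{k}$ with $\underline{\Q}^H$ and using the analogous properties in \cite{Saito_1988,Saito_1990} (as was done in Lemma \ref{lem: weight agree with fibers std normal crossings}), yielding a filtration of $(j_{n*}\mathcal L_n)_\C$ by $K$-mixed Hodge modules with pure graded pieces, which is the weight filtration of Section \ref{sec: weight filtration over C} by the same uniqueness.

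The only subtle point, which I expect to be the main obstacle to write carefully, is the interaction between the t-left exactness of $\rho_*$ and base change: one has to ensure that the filtration on $\rho_*\mathcal F$ induced by the spectral sequence is \emph{itself} compatible with passage to the fibers, rather than only its associated graded. This is true because base change along $\overline{x} \to S$ is t-exact for the relative perverse t-structure and commutes with the finite filtration on the $E_\infty$-page, but spelling it out requires a small diagram chase with the spectral sequence \eqref{eqn: spectral sequence rho pushforward} on each fiber.
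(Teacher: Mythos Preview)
Your proposal is correct and takes essentially the same approach as the paper: the paper treats the proposition as a summary statement whose content is the preceding construction together with the remark that fibers agree ``using a similar argument as in Lemma \ref{lem: weight agree with fibers std normal crossings}'', and you have simply unpacked that remark in detail. If anything, you give more justification than the paper does, including the spectral-sequence base-change point that the paper leaves implicit.
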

Now that we have constructed the weight filtration over $\Z_p$, we will show that the specialization maps induce the isomorphism of weight spectral sequences in Theorem \ref{thm: comparison theorem}.
\begin{proof}[Proof of Theorem \ref{thm: comparison theorem}]
We start our analysis from the graded pieces $\Gr^W_w\mathcal F$ given etale-locally by Lemma \ref{lem: graded pieces in compactification}. As the divisors $D_I$ are smooth and proper over $\Z_p$, the vanishing cycles $R\Phi \underline k_{D_I}$ relative to $\Z_p$ is zero. Since vanishing cycles commute with proper pushforward and can be checked etale-locally, we have that $R\Phi \rho_*\Gr^W_{-r}\mathcal F=0$. Thus, we have $i_s^*\rho_*\Gr^W_{-r}\mathcal F\xrightarrow\sim R\Psi \rho_*\Gr^W_{-r}$, giving an isomorphism of the first page of the special fiber and nearby cycles of Equation \eqref{eqn: spectral sequence rho pushforward}, which implies that the spectral sequences are isomorphic. Hence, the abutments are also isomorphic and we have $i_s^*\Gr^W_w\rho_*\mathcal F\xrightarrow \sim R\Psi \Gr^W_w\rho_*\mathcal F$. Pulling back by $v^*$, we obtain $i_s^*\Gr^W_wj_{n*}\mathcal L_n\xrightarrow \sim R\Psi \Gr^W_wj_{n*}\mathcal L_n$. Taking $R\Gamma$ on both sides, we get the first page of Equation \eqref{eqn: both spectral sequence} where we use the comparison theorem for etale and analytic cohomology theorem over $\C$ \cite[Section 6.1.2]{BBD}. Hence, we get an isomorphism of spectral sequences as desired.
\end{proof}
\section{Consequences of the weight filtration}
Finally, we give two applications of our weight filtration construction as promised in the introduction. First, we will show concentration of weight as stated in Theorem \ref{thm: concentration of weight in algebra}, and then we will restrict to the case of finite-dimensional Nichols algebra and prove Theorem \ref{thm: finite nichols algebra} with Theorem \ref{thm: S3 transposition} as a special case.
\subsection{Concentration of weight in algebras}
We give the proof of Theorem \ref{thm: concentration of weight in algebra} in this subsection. Let $V\in \mathcal V$ be a braided vector space, and as usual let $\mathfrak A=\mathfrak A(V)$, $T=T(V^*)$, $\mathfrak B=\mathfrak B(V)$. Note that for this subsection we do not require the finite monodromy condition, because we do not need the geometric weight filtration, and we will not use that the algebraic weight filtration is a bialgebra filtration. It suffices to prove the same statement for the tensor algebra $T$ but with a negative sign on the grading, i.e.
\begin{equation}\label{eqn: concentration in tensor algebra}
\frac{\dim W_{-c^+n}T_n}{\dim T_n}\rightarrow 0, \qquad \frac{\dim W_{-c^-n}T_n}{\dim T_n}\rightarrow 1.
\end{equation}

We make some simple observations. It is clear from definition that the algebraic weight filtration on $T_n$ is concentrated in weights $-n+1\le w\le 0$. Furthermore, every primitive element causes the filtration on $T$ to go down by one, so intuitively, the more primitive elements $T$ has, the lower the filtration on $T$. However, we have no control of how many primitive elements there are, so in this proof we basically only use that concatenation preserves the weight filtration.
\begin{proof}[Proof of Theorem \ref{thm: concentration of weight in algebra}]
For each algebraic degree $n$, we define the average weight of $T_n$ to be
$$a(n)=\frac{1}{\dim (T_n)}\sum_w w\cdot \dim(\Gr_wT_n).$$
Note that $a(n)\le 0$ since the weight of the tensor algebra is always $\le 0$.

Take a basis of words $x_1,\ldots, x_d$ of $T_n$ compatible with the weight filtration, in other words, we require that for every $w$ that a subset of these elements are a basis for $W_wT_n$. Let $w_i$ be the minimum integer such that $x_i\in W_{w_i}T_n$. Suppose we concatenated $m$ of these words to form $x=x_{i_1}\cdots x_{i_m}$. Then, because the weight filtration is an algebra filtration and multiplication is given by concatenation, we must have $x\in W_wT_{nm}$ for $w=w_{i_1}+\cdots +w_{i_m}$. By considering all possible length $m$ concatenations, these form a basis of $T_{nm}$, and by the above argument the average weight of this basis satisfies 
\begin{equation}\label{eqn: ineq 1}
a(nm)\le m\cdot a(n).
\end{equation}

Furthermore, we can instead concatenate a basis of $T_m$ with a basis of $T_1$ instead, and again since concatenation respects the weight filtration we must have 
\begin{equation}\label{eqn: ineq 2}
a(n+1)\le a(n),
\end{equation}
i.e. that $a(n)$ is non-increasing. 

It is an exercise in analysis to conclude that the limit $\lim_{n\rightarrow \infty} a(n)/n$ exists from these two inequalities, and is equal to $\inf_n a(n)/n$. We explain this as follows. Let $-c=\inf_n a(n)/n$, this is well defined with $0\le c\le 1$ since the weight filtration in $T_n$ is concentrated in weights $-n+1$ to $0$. For every sufficiently small $\epsilon>0$ there is some $n$ where $a(n)/n\le -c+\epsilon$. Now, for any integer $N>n/\epsilon$ we will have $$\frac{a(N)}{N}\le \frac{a(n\lfloor \frac N n\rfloor)}{N}\le  (1-\epsilon) \frac{a(n\lfloor \frac{n'}{n}\rfloor)}{n\lfloor \frac{n'}{n}\rfloor} \le (1-\epsilon) \frac{a(n)}{n}  \le (1-\epsilon)(-c+\epsilon),$$
where we used Equation \eqref{eqn: ineq 2} in the first inequality and Equation \eqref{eqn: ineq 1} in the third inequality, proving that the limit is indeed $-c$.

Let choose this $0\le c\le 1$ in the statement so that $-c=\inf_n a(n)/n=\lim_{n\rightarrow \infty} a(n)/n$. If $c=0$, then we must have $a(n)=0$ for all $n$, so there are no primitive elements in the tensor algebra. This corresponds to the case when $\mathfrak A(V)=\mathfrak B(V)$ is generated in degree $1$.

We first prove Equation \eqref{eqn: concentration in tensor algebra} for $c^-$. Again, for any sufficiently small $\epsilon >0$, take some $n$ where $a(n)/n\le -c+\epsilon$, and a basis $x_1,\ldots ,x_d$ of $T_n$ compatible with the weight filtration like above. Consider i.i.d. random variables $Y_1,\ldots, Y_m$ each uniformly chosen from $\{1,\ldots, d\}$. The concatenation $x_{Y_1}\cdots x_{Y_m}$ is in $W_wT_{nm}$ where $w=w_{Y_1}+\cdots +w_{Y_m}$ is the sum of random variables. By the weak law of large numbers, $$\mathbb P\left(\left|\frac{w}{m}-a(n)\right|<\epsilon \right)\rightarrow 1$$
as $m\rightarrow \infty$. This means that for all sufficiently large $m$, almost all concatenations lie inside $W_wT_{nm}$ for $w=m(a(n)+\epsilon)\le nm(-c+2\epsilon)$, which finishes the argument for $\dim (W_{-c^-n}T_n)/\dim(T_n)\rightarrow 1$ when we take $\epsilon\rightarrow 0$.

Now we prove the statement for $c^+$. For every sufficiently small $\epsilon>0$, we can take $c^-=c-\epsilon$ and deduce that $\dim (W_{-c^-n}T_n)\ge (1-\epsilon)\dim (T_n)$ for sufficiently large $n$ by the above argument. If we let $\dim (W_{-c^+n}T_n)=d_n\dim (T_n)$, then we have
$$-c\le \frac{a(n)}{n}\le -c^+d_n-c^-(1-\epsilon-d_n)$$
and rearranging we obtain
$$d_n\le \frac{c-(c-\epsilon)(1-\epsilon)}{c^+-c+\epsilon}$$
so we are done after taking $\epsilon \rightarrow 0$.
\end{proof}
We remark that our proof does not give any effective rate of convergence, as without further understanding of the primitive elements, we do not know how fast $a(n)/n$ converges to the limit. Also, as discussed in the introduction, we do not know a way to prove the analogous statement for cohomology.

\subsubsection{Examples} Let us discuss this in the context of our three examples in Section \ref{sec: weight examples}. For the one dimensional vector space $V=k$ with braiding given by a $m$-th root of unity $q\in k^\times$, it is clear from Figure \ref{fig5} that $c=1/m$. 

For the usual shuffle algebra, we will prove that $c=1$. Recall that in this case we showed that the weight filtration is the same as the word length filtration, which is the same as the PBW filtration on $U(L(V^*))$ by Proposition \ref{prop: PBW word length agree}. We proved that $\dim(W_{-n+1}T_n)=\dim(L(V^*))$ is the number of Lyndon words of length $n$ over an alphabet of size $d=\dim(V)$, which is given by the formula $I(n)=\frac{1}{n}\sum_{k\mid n} \mu(k)d^{n/k}\approx d^n/n$. However, this is not yet enough to show $c=1$, and we need something stronger. Note that we have $\dim(W_{-n+m}T_n)=\dim(F_{m}U(L(V^*))_n)$ where $F_m$ is the PBW filtration, and this is simply $\sum_{k\le m} M(k,n)$ where $M(k,n)$ is the number of multisets of $k$ Lyndon words whose lengths sum to $n$. One can construct a generating function using $I(n)$ to count $M(k,n)$. It turns out that if $d=q$ is a prime power, then the same formula for $I(n)$ also counts the number of irreducible polynomials of degree $n$ over $\F_q$ and here $M(k,n)$ is the number of polynomials of degree $n$ with $k$ irreducible factors. The problem is well studied in this case, \cite[Corollary 1]{Flajolet_Soria_1990} tells us that the number of irreducible factors of a random polynomial is Gaussian with mean $\log n$ and standard deviation $\sqrt{\log n}$. The generating function technique used there works when $d$ is not prime, so we conclude that when $m=a\log n$ for $a>1$, then $\dim(W_{-n+m}T_n)=\sum_{k\le m} M(k,n)\rightarrow 1$, so we have $c=1$.

Lastly, for the braided vector space associated to the rack of transpositions in $S_3$, we can compute values of $a(n)/n$ which are upper bounds for $-c$. When the cocycle $x=1$, we see that $a(5)/5=-0.0148$ is very small, as there are very little primitive elements up to $n=5$. Meanwhile, for $x=-1$ we have $a(5)/5=-0.415$. This suggests that the cohomological weights of $\Hur^{G,c}_n$ are on average more than $0.415n$ lower than one might expect from cohomological degree.

\subsection{Finite Nichols algebras} Our goal here is to prove Theorem \ref{thm: finite nichols algebra} and \ref{thm: S3 transposition}, following the proof sketch given in the introduction. Consider a Hurwitz space $\Hur^{G,c}_n$ with the associated braided vector space $V_{\epsilon}=V(c,-1)\in \mathcal {YD}_G$ which local systems necessarily has finite monodromy. Let $\mathfrak A=\mathfrak A(V_\epsilon)$, $\mathfrak B=\mathfrak B(V_\epsilon)$, $\mathfrak A^{gr}$ be the associated graded of $\mathfrak A$ with respect to the weight filtration, and also define $\mathfrak C=\mathfrak A^{gr}\Box_{\mathfrak B}k$ to be the cotensor product. We note that $\mathfrak A^{gr}$ has an algebra grading $n$ as well as a weight grading $w$, and likewise the same is true for the subalgebra $\mathfrak B = W_0\mathfrak A \subseteq \mathfrak A^{gr}$ and the cotensor product $\mathfrak C$.

Here, we will use the grading convention $\Tor^A_{n-i,n}(k,k)$ and $\Ext_A^{n-i,n}(k,k)$ where we $n$ is the algebraic degree and $i$ the cohomological degree, and the reason for this choice is that $i$ will correspond to the cohomology degree of Hurwitz space, as we saw in Theorem \ref{thm: relate weight decompositions}. If the bialgebra $A$ also has a weight grading, then $\Tor$ and $\Ext$ also have a weight grading. In this case, recall that we keep the weight grading the same without flipping the sign when we take the dual from $\Tor^{A}_{n-i,n}(k,k)$ to $\Ext_{A}^{n-i,n}(k,k)$.

The next two lemmas gives us inequalities between different gradings on $\mathfrak B$ and $\mathfrak C$ assuming finiteness conditions on the Nichols algebras.
\begin{lemma}\label{lem: B finite dim}
If $\mathfrak B$ is finite-dimensional, then on $\mathfrak C$ the weight and algebraic gradings satisfy $w\ge \delta n$ for some $\delta>0$.
\end{lemma}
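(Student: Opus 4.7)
The plan is to combine the Radford biproduct decomposition of $\mathfrak A^{gr}$ over the sub-bialgebra $\mathfrak B$ with a combinatorial argument on the weight grading. Since the weight filtration is a bialgebra filtration with $W_0\mathfrak A=\mathfrak B$, the associated graded $\mathfrak A^{gr}$ contains $\mathfrak B$ as a graded sub-bialgebra supported entirely in weight zero, and the projection $\pi\colon \mathfrak A^{gr}\twoheadrightarrow \mathfrak B$ killing everything in positive weight is a Hopf map that splits the inclusion. Radford's biproduct theorem applied to this data gives a bigraded identification $\mathfrak A^{gr}\cong \mathfrak C\otimes \mathfrak B$ of right $\mathfrak B$-comodules, with $\mathfrak B$ placed in weight zero, so
$$\dim \mathfrak A^{gr}_{n,w} \;=\; \sum_{0\le b\le N}\dim \mathfrak C_{n-b,w}\cdot \dim \mathfrak B_b,$$
where $N=\max\{n:\mathfrak B_n\neq 0\}$ is finite by hypothesis. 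In particular a lower bound on the minimum weight in $\mathfrak A^{gr}_n$ of the form $w\ge \delta'(n-N)$ is equivalent, after adjusting $\delta$, to the desired bound on $\mathfrak C$.

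The first immediate consequence is $\mathfrak C_{n,0}=0$ for $n>0$: the weight-zero part $(\mathfrak A^{gr}\Box_{\mathfrak B}k)_{w=0}$ equals $\mathfrak B\Box_{\mathfrak B}k = k$, which sits only in algebraic degree zero. Hence every nontrivial element of $\mathfrak C$ already has weight at least $1$. To upgrade this to a linear bound, I would exploit the algebra structure on $\mathfrak A^{gr}$: any $x\in \mathfrak A^{gr}_n$ may be written as a product of algebra generators, and since $\mathfrak B$ is generated in degree one with top algebraic degree $N$, any consecutive run of $\mathfrak B$-generators in such a product has length at most $N$ (beyond that the run would multiply to zero inside $\mathfrak B$). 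Therefore a linearly growing portion of the generators in the product must lie outside $\mathfrak B$, and by the defining property $\mathfrak B=W_0\mathfrak A^{gr}$ each such generator carries weight at least $1$. Summing these contributions yields a weight lower bound linear in $n$, which transports back to $\mathfrak C$ through the biproduct.

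The main obstacle is making this counting rigorous, which requires a uniform bound $M$ on the algebraic degrees of the generators of $\mathfrak A^{gr}$ (equivalently, of $\mathfrak C$). Establishing this bounded-generation property from the finite-dimensionality of $\mathfrak B$ is the key technical step: the plan is to derive it from the Hilbert series identity $P_{\mathfrak A^{gr}}(t,q)=P_{\mathfrak C}(t,q)\cdot P_{\mathfrak B}(t)$, where $P_{\mathfrak B}$ is a polynomial of degree $N$, together with the structure of $\mathfrak C$ as a Hopf algebra in the braided category of Yetter-Drinfeld modules over $\mathfrak B$. In this braided context $\mathfrak C$ is generated by its lowest-weight indecomposables much like a Nichols object, and this structure should force new generators to appear only in algebraic degrees bounded by some $M=M(N)$. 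With such a bound in hand, setting $\delta=1/(N+M)$ yields $w\ge \delta n$ for all sufficiently large $n$, and the inequality extends to all $n$ by further shrinking $\delta$ to accommodate the finitely many small degrees.
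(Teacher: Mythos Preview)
Your proposal has a genuine gap: the bounded-generation step you flag as ``the key technical step'' is never actually proved, and there is no reason to expect it to hold. You hope that the braided Hopf structure of $\mathfrak C$ over $\mathfrak B$, together with the Hilbert-series identity, forces algebra generators of $\mathfrak C$ (or $\mathfrak A^{gr}$) into bounded algebraic degree, but nothing in that data gives such a bound; this is a wish, not an argument. Without it, your run-counting argument has no leverage, since a single high-degree generator of low weight would violate the conclusion.

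The paper avoids this entirely by dualizing. In the dual tensor algebra $T=T(V^*)$ multiplication is \emph{free concatenation}, so every element of $T_n$ factors as a product of pieces of any prescribed lengths. From $W_0\mathfrak A=\mathfrak B$ with top degree $m$ one gets $W_0\mathfrak A_{m+1}=0$, hence dually $W_{-1}T_{m+1}=T_{m+1}$. Concatenating $k$ copies immediately yields $W_{-k}T_{k(m+1)}=T_{k(m+1)}$, since concatenation is surjective onto $T_{k(m+1)}$. Dualizing back, every element of $\mathfrak A^{gr}_{k(m+1)}$ has weight at least $k$; the bound on $\mathfrak C$ then follows from the bigraded isomorphism $\mathfrak C\otimes\mathfrak B\cong\mathfrak A^{gr}$ (the same Radford-type decomposition you invoke) by tensoring with a nonzero top-degree element of $\mathfrak B$. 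No finite generation of $\mathfrak C$ or $\mathfrak A^{gr}$ is needed at any point. The moral is that the shuffle product on $\mathfrak A$ is the wrong side for a factorization argument; passing to $T$, where multiplication is free, makes the combinatorics trivial.
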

\begin{proof}
Suppose that the maximal algebraic degree of $\mathfrak B$ is $m$, then we will prove the statement for $\delta=1/(m+1)$. Since $W_0\mathfrak A=\mathfrak B$, this implies that $W_0 \mathfrak A_{m+1}=0$, so dually $W_{-1}T_{m+1}=T_{m+1}$. As concatenation respects the weight filtration, this implies that $W_{-k}T_{k(m+1)}=T_{k(m+1)}$ so $W_{k-1}\mathfrak A_{k(m+1)}=0$. By \cite[Theorem 4.1]{ETW17}, we have an isomorphism $\mathfrak C\otimes \mathfrak B\xrightarrow \sim \mathfrak A^{gr}$, and this preserves the weight, cohomological and algebraic grading. Suppose the contrary that some element of $\mathfrak C_n$ has weight $w<\delta n$, so $w(m+1)<n$. Then, we can tensor this with an element in $\mathfrak B_m$ that has weight $0$, obtaining an element in $\mathfrak A^{gr}$ that has weight $w$ and degree greater than $w(m+1)+m$, but this is impossible from the discussion above.
\end{proof}
\begin{lemma}\label{lem: Ext B finite gen}
If $\Ext_{\mathfrak B}(k,k)$ is finitely generated, then its cohomological and algebraic grading satisfy $i\le cn$ for some $c<1$.
\end{lemma}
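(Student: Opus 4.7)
The plan is to reduce the statement to a slope bound on the bidegrees of a finite generating set of the Yoneda algebra. By hypothesis $\Ext_{\mathfrak B}(k,k)$ is finitely generated as a bigraded $k$-algebra under cup product, so I fix a finite bihomogeneous generating set $\{\xi_k\}_{k=1}^m$ with $\xi_k\in\Ext^{i_k,n_k}(k,k)$. A direct inspection of the bar complex shows that $\Ext^{i,n}(k,k)=0$ whenever $i>n$, since the term in bidegree $(i,n)$ is built from tensors of $i$ elements of $\mathfrak B_{>0}$ of total internal degree $n$; hence $1\le i_k\le n_k$ for every $k$.

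Every nonzero class in $\Ext^{i,n}(k,k)$ with $(i,n)\ne(0,0)$ is a $k$-linear combination of iterated cup products $\xi_{k_1}\smile\cdots\smile\xi_{k_r}$ with $(i,n)=\sum_j(i_{k_j},n_{k_j})$. Writing $i/n$ as a weighted average of the ratios $i_{k_j}/n_{k_j}$, with positive weights $n_{k_j}$, yields $i/n\le c := \max_k i_k/n_k\le 1$, which already gives the slope bound $i\le cn$ with $c\le 1$.

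The main obstacle is upgrading $c\le 1$ to the strict inequality $c<1$. The obstruction comes from generators of slope one, notably the dual classes $V^*\subseteq\Ext^{1,1}(k,k)$ coming from the degree-$1$ generation of $\mathfrak B$. The plan is to isolate the subalgebra $D\subseteq\Ext_{\mathfrak B}(k,k)$ generated by those $\xi_k$ with $i_k=n_k$, show that $D$ is bounded in algebraic degree (equivalently, that all diagonal generators are nilpotent, as forced by finite generation of the total Ext algebra together with the structure of $\mathfrak B$), and then observe that in algebraic degree $n$ exceeding the algebraic-degree cap of $D$ every nonzero class must involve at least one generator with $i_k<n_k$. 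This drags the slope strictly below $1$; the finitely many low-degree exceptions are then absorbed by slightly enlarging $c$. The hard part is showing that $D$ is bounded — i.e.\ nilpotency of the slope-one generators — which is where the real work of the argument lies.
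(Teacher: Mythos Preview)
Your overall approach---take a finite generating set, bound the slope $i/n$ by the maximum slope among generators---is exactly the paper's approach. However, you have misread the indexing convention, and this misreading is the sole source of the ``hard part'' you identify.

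Just before this lemma the paper declares the convention $\Ext_A^{n-i,n}(k,k)$, where the \emph{first} superscript is $n-i$, not $i$. The quantity $n-i$ is the standard homological degree: the term of the bar complex in that degree is built from $n-i$ tensor factors of $\mathfrak B_{>0}$ of total algebraic degree $n$. Hence every nonzero class in algebraic degree $n\ge 1$ has $n-i\ge 1$, i.e.\ $i\le n-1<n$. In particular, the dual classes $V^*$ that worry you sit in $(n-i,n)=(1,1)$, which means $i=0$, $n=1$, and slope $i/n=0$, not $1$. (The paper's proof of Theorem~\ref{thm: S3 transposition} confirms this: the $(1,1)$ generators contribute slope $0$, and only the $(4,6)$ generator contributes the nonzero slope $2/6=1/3$.)

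With the correct indexing, every generator $\xi_k$ has $i_k\le n_k-1$, so $c:=\max_k i_k/n_k$ is automatically strictly less than $1$. Your weighted-average argument then gives $i\le cn$ for all classes, and you are done. The paper's proof is literally this two-line observation; there is no nilpotency step, no subalgebra $D$ to control, and no ``real work'' remaining.
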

\begin{proof}
This is almost obvious, let its generators be in cohomological degree $i_j$ and algebraic degree $n_j$. Clearly, $n_j-i_j>0$ by our indexing convention, so there is a maximum value of $i_j/n_j$ which we call $c$. Note that $c<1$, and all elements in $\Ext_{\mathfrak B}(k,k)$ satisfy $i\le cn$ since the inequality is true for the generators.
\end{proof}

Now, we combine these two inequalities to complete the proof of Theorem \ref{thm: finite nichols algebra}.
\begin{proof}[Proof of Theorem \ref{thm: finite nichols algebra}] Let us choose the constants $c$ and $\delta$ as in Lemma \ref{lem: B finite dim} and \ref{lem: Ext B finite gen}, and choose $\epsilon=\delta/(1-c)$. Suppose that an element $x\in \Ext_{\mathfrak A}^{n-i,n}(k,k)$ has weight $w$. As in the statement of the theorem, we suppose that $i>cn$. The triple of spectral sequence in Equation \eqref{eqn: triple spectral sequences} respects the three gradings, so $x$ must come from a linear combination of elements of the form $u\otimes v\in \Ext_{\mathfrak B}(k,k)\otimes \Ext_{\mathfrak C}(k,k)$. Suppose $u$ and $v$ have gradings $w_1,i_1,n_1$ and $w_2,i_2,n_2$ respectively, then these gradings must add up to $w,i,n$. However, Lemma \ref{lem: B finite dim} and \ref{lem: Ext B finite gen} tell us that $i_1\le cn_1$ and $w_2\ge \delta n_2$ since we assumed the finiteness conditions in the statement of the theorem. Furthermore, we have the trivial bound $n_2\ge i_2$. Combining these inequalities, we get
$$i_2 = i-i_1\ge i-cn_1= i-cn+cn_2 \ge i-cn+ci_2$$
so we have
$i_2\ge \frac{i-cn}{1-c}$ and thus $w\ge w_2\ge \delta i_2\ge \epsilon (i-cn)$. Finally, Theorem \ref{thm: relate weight decompositions} tells us that the weight grading $w$ on algebras correspond to a Frobenius weight of $2n-i-w$, and this finishes the proof.
\end{proof}
Lastly, we specialize this to the case where $G=S_3$ and $c$ is the conjugacy class of transpositions.
\begin{proof}[Proof of Theorem \ref{thm: S3 transposition}]
The explicit constants in Theorem \ref{thm: S3 transposition} follow as  now $\mathfrak B$ is the Fomin-Kirillov algebra $\mathcal{FK}_3$ with maximal algebraic degree $4$ which gives $\delta=1/5$ in the proof of Lemma \ref{lem: B finite dim}, and $\Ext_{\mathfrak B}(k,k)$ is generated in degrees $(n-i,n)=(1,1),(1,1),(1,1),(4,6)$ by \cite{FK3}, so we can take $c=1/3$ in Lemma \ref{lem: Ext B finite gen} and this gives $\epsilon= 3/10$, yielding the desired inequality.

The floor in the first part of Theorem \ref{thm: S3 transposition} comes from a more careful analysis of the inequalities, because if the weight were to be as expected then we must have $w_2=n_2=i_2=0$, i.e. the cohomology comes purely from $\Ext_{\mathfrak B}(k,k)$, but from the generators we can tell that $i_1\le 2\lfloor \frac{n_1}6\rfloor$ so we must have $i\le 2\lfloor \frac{n}6\rfloor$.
\end{proof}

\newpage
\printbibliography

@article{EVW16, title={Homological stability for Hurwitz spaces and the Cohen-lenstra conjecture over function fields}, volume={183}, DOI={10.4007/annals.2016.183.3.1}, number={3}, journal={Annals of Mathematics}, author={Ellenberg, Jordan and Venkatesh, Akshay and Westerland, Craig}, year={2016}, month={May}, pages={729–786}}

@misc{ETW17,
      title={Fox-Neuwirth-Fuks cells, quantum shuffle algebras, and Malle's conjecture for function fields}, 
      author={Jordan S. Ellenberg and TriThang Tran and Craig Westerland},
      year={2017},
      eprint={1701.04541},
      archivePrefix={arXiv},
      primaryClass={math.NT},
      url={https://arxiv.org/abs/1701.04541}, 
}

@article{TT13, title={Secondary terms in counting functions for cubic fields}, volume={162}, DOI={10.1215/00127094-2371752}, number={13}, journal={Duke Mathematical Journal}, author={Taniguchi, Takashi and Thorne, Frank}, year={2013}, month={Oct}}

@article{BTT23, title={Improved error estimates for the Davenport–Heilbronn theorems}, volume={389}, DOI={10.1007/s00208-023-02684-w}, number={4}, journal={Mathematische Annalen}, author={Bhargava, Manjul and Taniguchi, Takashi and Thorne, Frank}, year={2023}, month={Oct}, pages={3471–3512}}

@misc{Ahlquist25,
      title={On the counting function of cubic function fields}, 
      author={Victor Ahlquist},
      year={2025},
      eprint={2504.12160},
      archivePrefix={arXiv},
      primaryClass={math.NT},
      url={https://arxiv.org/abs/2504.12160}, 
}

@misc{Kural25,
      title={The geometry of secondary terms in arithmetic statistics}, 
      author={Michael Kural},
      year={2025},
      eprint={2504.17909},
      archivePrefix={arXiv},
      primaryClass={math.NT},
      url={https://arxiv.org/abs/2504.17909}, 
}

@article{Zhao13, title={On sieve methods for varieties over finite fields}, journal={PhD thesis, The University of Wisconsin-Madison}, author={Zhao, Yongqiang}, year={2013}}

@article{KS20, title={Shuffle algebras and perverse sheaves}, volume={16}, DOI={10.4310/pamq.2020.v16.n3.a9}, number={3}, journal={Pure and Applied Mathematics Quarterly}, author={Kapranov, Mikhail and Schechtman, Vadim}, year={2020}, pages={573–657}}

@article{FK3, title={The cohomology ring of the 12-dimensional Fomin-Kirillov algebra}, volume={291}, DOI={10.1016/j.aim.2016.01.001}, journal={Advances in Mathematics}, author={Ştefan, Dragoş and Vay, Cristian}, year={2016}, month={Mar}, pages={584–620}}

@article{PointedHopf, title={Cohomology rings of finite-dimensional pointed Hopf algebras over abelian groups}, volume={9}, DOI={10.1007/s40687-021-00287-y}, number={1}, journal={Research in the Mathematical Sciences}, author={Andruskiewitsch, N. and Angiono, I. and Pevtsova, J. and Witherspoon, S.}, year={2022}, month={Jan}}

@misc{landesman2025cohenlenstramomentsfunctionfields,
      title={The Cohen--Lenstra moments over function fields via the stable homology of non-splitting Hurwitz spaces}, 
      author={Aaron Landesman and Ishan Levy},
      year={2025},
      eprint={2410.22210},
      archivePrefix={arXiv},
      primaryClass={math.NT},
      url={https://arxiv.org/abs/2410.22210}, 
}

@misc{landesman2025homologicalstabilityhurwitzspaces,
      title={Homological stability for Hurwitz spaces and applications}, 
      author={Aaron Landesman and Ishan Levy},
      year={2025},
      eprint={2503.03861},
      archivePrefix={arXiv},
      primaryClass={math.AT},
      url={https://arxiv.org/abs/2503.03861}, 
}

@misc{landesman2025stablehomologyhurwitzmodules,
      title={The stable homology of Hurwitz modules and applications}, 
      author={Aaron Landesman and Ishan Levy},
      year={2025},
      eprint={2510.02068},
      archivePrefix={arXiv},
      primaryClass={math.NT},
      url={https://arxiv.org/abs/2510.02068}, 
}

@book{HS20, place={Providence, RI}, title={Hopf algebras and Root Systems}, publisher={American Mathematical Society}, author={Heckenberger, István and Schneider, Hans-Jürgen}, year={2020}}

@book{Kashiwara_Schapira, place={Berlin}, title={Sheaves on manifolds}, publisher={Springer-Verlag}, author={Kashiwara, Masaki and Schapira, Pierre}, year={1990}}

@misc{BBD,
title = "Faisceaux pervers",
author = "Beilinson, A. A. and J. Bernstein and P. Deligne",
year = "1982",
volume = "100",
series = "Ast{\'e}risque",
publisher = "Soc. Math. France, Paris",
pages = "5--171",
booktitle = "Analyse et topologie sur les espaces singuliers (I). CIRM, 6 - 10 juillet 1981",
note = "Luminy conference ; Conference date: 06-07-1981 Through 11-07-1981",
}

@book{Kiehl_Weissauer_2010, place={Berlin}, title={Weil conjectures, perverse sheaves and l’adic fourier transform}, publisher={Springer}, author={Kiehl, Reinhardt and Weissauer, Rainer}, year={2010}}

@article{Hansen_Scholze_2023, title={Relative perversity}, volume={3}, DOI={10.1090/cams/21}, number={9}, journal={Communications of the American Mathematical Society}, author={Hansen, David and Scholze, Peter}, year={2023}, month={Aug}, pages={631–668}}

@article{Saito_1988, title={Modules de Hodge polarisables}, volume={24}, DOI={10.2977/prims/1195173930}, number={6}, journal={Publications of the Research Institute for Mathematical Sciences}, author={Saito, Morihiko}, year={1988}, month={Dec}, pages={849–995}}

@article{Saito_1990, title={Mixed Hodge modules}, volume={26}, DOI={10.2977/prims/1195171082}, number={2}, journal={Publications of the Research Institute for Mathematical Sciences}, author={Saito, Morihiko}, year={1990}, month={Apr}, pages={221–333}}

@article{Deligne_1971, title={Théorie de Hodge, II}, volume={40}, DOI={10.1007/bf02684692}, number={1}, journal={Publications mathématiques de l’IHÉS}, author={Deligne, Pierre}, year={1971}, month={Dec}, pages={5–57}}

@article{Deligne_1974, title={Théorie de Hodge, III}, volume={44}, DOI={10.1007/bf02685881}, number={1}, journal={Publications mathématiques de l’IHÉS}, author={Deligne, Pierre}, year={1974}, month={Jan}, pages={5–77}}

@article{SGA7II, title={Groupes de monodromie en Géométrie Algébrique (SGA 7 II)}, DOI={10.1007/bfb0060505}, journal={Lecture Notes in Mathematics}, author={Deligne, P. and Katz, N.}, year={1973}}

@article{Harris_Mumford_1982, title={On the Kodaira dimension of the moduli space of curves}, DOI={10.1007/978-1-4757-4265-7_8}, journal={Selected Papers}, author={Harris, Joe and Mumford, David}, year={1982}, pages={171–234}}

@article{Abramovich_Corti_Vistoli_2003, title={Twisted bundles and admissible covers}, volume={31}, DOI={10.1081/agb-120022434}, number={8}, journal={Communications in Algebra}, author={Abramovich, Dan and Corti, Alessio and Vistoli, Angelo}, year={2003}, month={Jan}, pages={3547–3618}}

@article{Lyubashenko_2001, title={Exterior tensor product of perverse sheaves}, volume={53}, DOI={10.1023/a:1012384019661}, number={3}, journal={Ukrainian Mathematical Journal}, author={Lyubashenko, V. V.}, year={2001}, month={Mar}, pages={354–367}}

@article{Tannakian, title={Catégories Tannakiennes}, volume={87}, DOI={10.1007/978-0-8176-4575-5_3}, journal={Grothendieck Festschrift vol II. Progress in Mathematics}, author={Deligne, P.}, year={1990}, pages={111–195}}

@article{Callegaro_2006, title={The homology of the Milnor fiber for classical braid groups}, volume={6}, DOI={10.2140/agt.2006.6.1903}, number={4}, journal={Algebraic and Geometric Topology}, author={Callegaro, Filippo}, year={2006}, month={Nov}, pages={1903–1923}}

@article{Milnor_Moore_1965, title={On the structure of Hopf algebras}, volume={81}, DOI={10.2307/1970615}, number={2}, journal={The Annals of Mathematics}, author={Milnor, John W. and Moore, John C.}, year={1965}, month={Mar}, pages={211}}

@article{Fomin_Kirillov_1999, title={Quadratic algebras, Dunkl elements, and Schubert calculus}, DOI={10.1007/978-1-4612-1770-1_8}, journal={Advances in Geometry}, author={Fomin, Sergey and Kirillov, Anatol N.}, year={1999}, pages={147–182}}

@article{Westerland_2025, title={Structure theorems for braided Hopf algebras}, volume={18}, DOI={10.1112/topo.70051}, number={4}, journal={Journal of Topology}, author={Westerland, Craig}, year={2025}, month={Nov}}

@article{Kashiwara_1986, title={A study of variation of mixed Hodge structure}, volume={22}, DOI={10.2977/prims/1195177264}, number={5}, journal={Publications of the Research Institute for Mathematical Sciences}, author={Kashiwara, Masaki}, year={1986}, month={Oct}, pages={991–1024}}

@article{Steenbrink_Zucker_1985, title={Variation of mixed Hodge structure. I}, volume={80}, DOI={10.1007/bf01388729}, number={3}, journal={Inventiones Mathematicae}, author={Steenbrink, Joseph and Zucker, Steven}, year={1985}, month={Oct}, pages={489–542}}

@article{Flajolet_Soria_1990, title={Gaussian limiting distributions for the number of components in combinatorial structures}, volume={53}, DOI={10.1016/0097-3165(90)90056-3}, number={2}, journal={Journal of Combinatorial Theory, Series A}, author={Flajolet, Philippe and Soria, Michèle}, year={1990}, month={Mar}, pages={165–182}}

@article{Cadman_2007, title={Using stacks to impose tangency conditions on curves}, volume={129}, DOI={10.1353/ajm.2007.0007}, number={2}, journal={American Journal of Mathematics}, author={Cadman, Charles}, year={2007}, month={Apr}, pages={405–427}}

@article{Bianchi_Miller_2024, title={Polynomial stability of the homology of Hurwitz spaces}, volume={391}, DOI={10.1007/s00208-024-03009-1}, number={3}, journal={Mathematische Annalen}, author={Bianchi, Andrea and Miller, Jeremy}, year={2024}, month={Oct}, pages={4117–4144}}

@misc{himes2025representationstabilityorderedhurwitz,
      title={Representation stability for ordered Hurwitz spaces}, 
      author={Zachary Himes and Jeremy Miller and Jennifer C. H. Wilson},
      year={2025},
      eprint={2509.05516},
      archivePrefix={arXiv},
      primaryClass={math.AT},
      url={https://arxiv.org/abs/2509.05516}, 
}

@article{Türkelli_2015, title={Connected components of Hurwitz Schemes and Malle’s conjecture}, volume={155}, DOI={10.1016/j.jnt.2015.03.005}, journal={Journal of Number Theory}, author={Türkelli, Seyfi}, year={2015}, month={Oct}, pages={163–201}}

@article{Wood_2021, title={An algebraic lifting invariant of ellenberg, Venkatesh, and Westerland}, volume={8}, DOI={10.1007/s40687-021-00259-2}, number={2}, journal={Research in the Mathematical Sciences}, author={Wood, Melanie Matchett}, year={2021}, month={Mar}}

@article{Seguin_2025, title={Counting components of Hurwitz spaces}, DOI={10.1007/s11856-025-2848-5}, journal={Israel Journal of Mathematics}, author={Seguin, Béranger}, year={2025}, month={Nov}}

@article{Deopurkar_Patel_2015, title={The Picard rank conjecture for the Hurwitz spaces of degree up to five}, volume={9}, DOI={10.2140/ant.2015.9.459}, number={2}, journal={Algebra \& Number Theory}, author={Deopurkar, Anand and Patel, Anand}, year={2015}, month={Mar}, pages={459–492}}

@article{Zheng_2023, title={Stable cohomology of the moduli space of trigonal curves}, volume={2024}, DOI={10.1093/imrn/rnad011}, number={2}, journal={International Mathematics Research Notices}, author={Zheng, Angelina}, year={2023}, month={Feb}, pages={1123–1153}}

@article{Chan_Galatius_Payne_2021, title={Tropical curves, graph complexes, and top weight cohomology of $\mathcal M_g$}, volume={34}, DOI={10.1090/jams/965}, number={2}, journal={Journal of the American Mathematical Society}, author={Chan, Melody and Galatius, Søren and Payne, Sam}, year={2021}, month={Feb}, pages={565–594}}

@article{Bergström_Faber_Payne_2024, title={Polynomial Point counts and odd cohomology vanishing on moduli spaces of stable curves}, volume={199}, DOI={10.4007/annals.2024.199.3.7}, number={3}, journal={Annals of Mathematics}, author={Bergström, Jonas and Faber, Carel and Payne, Sam}, year={2024}, month={May}}

@misc{ellenberg2025homologicalstabilitygeneralizedhurwitz,
      title={Homological stability for generalized Hurwitz spaces and Selmer groups in quadratic twist families over function fields}, 
      author={Jordan S. Ellenberg and Aaron Landesman},
      year={2025},
      eprint={2310.16286},
      archivePrefix={arXiv},
      primaryClass={math.NT},
      url={https://arxiv.org/abs/2310.16286}, 
}

@article{Abramovich_Vistoli_2001, title={Compactifying the space of stable maps}, volume={15}, DOI={10.1090/s0894-0347-01-00380-0}, number={1}, journal={Journal of the American Mathematical Society}, author={Abramovich, Dan and Vistoli, Angelo}, year={2001}, month={Jul}, pages={27–75}}
\end{document}